\newtheorem{theorem}{Theorem}
\newtheorem*{mtheo}{Theorem}
\newtheorem{coroll}{Corollary}
\newtheorem{lemma}{Lemma}
\newtheorem{proposition}{Proposition}
\newtheorem{definition}{Definition}
\theoremstyle{definition}
\newtheorem{rem}{Remark}
\begin{document}
\title[ A canonical theorem for uniform families of strong subtrees]{A Canonical partition theorem for uniform families of finite strong subtrees}
\author{VLITAS Dimitris}
\footnotetext{The research leading to these results has received funding from the [European Community's] Seventh Framework Programme [FP7/2007-2013] under grant agreement n 238381}
\begin{abstract} Extending a result of K. Milliken \cite{Mi2}, in this paper we prove a Ramsey classification result for equivalence relations defined on uniform families of finite strong subtrees of a finite sequence $(U_i)_{i\in d}$ of fixed trees $U_i$, $i\in d$, that have a finite uniform branching but are of infinite length.

\end{abstract}
\maketitle

\section{Introduction}

Canonical results in Ramsey theory try to  describe equivalence relations in a given Ramsey structure,
based on the underlying pigeonhole principles. The first example of them is the classical Canonization
Theorem  by P. Erd\H{o}s and R. Rado \cite{Er-Ra} which can be presented as follows: Given $\alpha\le \beta
\le \omega$ let
$$\binom{\beta}{\alpha}:=\{f (\alpha)  \,:\, f:\alpha \rightarrow \beta \text{ is strictly increasing} \}.$$
The previous is commonly denoted by $[\beta]^{\alpha}$.   Then for any $n<\omega$  and any finite coloring of
$\binom{\omega}{n}$ there is an isomorphic copy $M$ of $\omega$ (i.e. the image of a strictly increasing
$f:\omega\rightarrow\omega$) and some
   $I\subseteq n(:=\{0,1,\dots,n-1\})$ such that any two  $n$-element subsets have the same color  if and only if they agree
  on the corresponding relative positions given by $I$.

This   was extended by P. Pudl\'ak and V. R\"odl in \cite{Pu-Ro}  for colorings of  a given \emph{uniform}
family $\mathcal{G}$ of finite subsets of $\omega$ (see  Section \ref{moredefn}) by showing that given any
coloring of  $\mathcal{G}$, there exists $A$ an infinite subset of $\omega$, a
uniform family $\mathcal{T}$ and a mapping $f:\mathcal{G}\to \mathcal{T}$ such that $f(X)\subseteq X$ for all
$X\in \mathcal{G}$ and such that any two $X,Y\in \mathcal{G}\upharpoonright A$ have the same color   if and
only if $f(X)=f(Y)$.

There is a natural extension of the Erd\H{o}s-Rado result, a kind of two-dimensional result  for certain
trees.    Let us define a \textit{\(b\)-branching tree} as a rooted tree \((T,<)  \) of height at most
\(\omega\) with the properties that   for every non-terminal node \(t\) the set of immediate successors
\(T_{t}\)\ has cardinality \(b\) and it is equipped with a fixed linear ordering \(<_{t}\), and such that the
terminal nodes (if any) have all the same height.  Examples of them are, given \(\tau\le\omega\), the tree
\((b^{<\tau},<)\) of functions \(f:i\rightarrow b\), \(i<\tau\),  endowed with the extension of functions
ordering \(<\), and ordering the set of immediate successors of a given \(f\) naturally. 
It is easy to see
that for any \(b\)-branching trees \(T\) and \(U\) of the same height there is a  unique
\textit{lexicographical-isomorphism} \(i_{T,U}:T\rightarrow U\), i.e. a tree-isomorphism  preserving the
corresponding orderings on sets of immediate successors (see Section $3$). In fact \((b^{<\tau},<)\) are the only examples, up to isomorphism, of $b$-branching trees with all terminal nodes of the same height.
Given two \(b\)-branching trees \(T\) and \(U\),
\textit{a strong embedding} is a lexicographical-isomorphic embedding   $i:T\rightarrow U$  which is  level
and meet preserving, that is, if $s,t \in T$ have the same height then also $i(s)$ and $i(t)$ and the meet
\(i(s)\wedge i(t)  \) of \(i(s)\) and \(i(t)\) is \(i(s\wedge t)\). For a definition of $s\wedge t$ see Section $3$.
 In this case, we say that \(i(T)\) is a
strong subtree of \(U\) isomorphic to \(T\).  Let     $\binom{U}{T}$ denote the family of strong-subtrees  of
$U$ isomorphic to \(T\). Then it is proved by K. Milliken \cite{Mi1} (see Section $3$) that for every finite coloring of
$\binom{b^{<\omega}}{b^{<n}}$  there is $T\in \binom{b^{<\omega}}{b^{<\omega}}$ such that the coloring on
$\binom{T}{b^{<n}}$ is constant. Notice that when $b=1$, then the result is exactly the Ramsey theorem for
$[\omega]^n$.
 In an unpublished paper, Milliken
\cite{Mi2} extended the Erd\H{o}s-Rado canonization theorem  by proving that given $n$ and an arbitrary
coloring $c:\binom{b^{<\omega}}{b^{<n}}\to \omega$, there is $T\in \binom{b^{<\omega}}{b^{<\omega}}$ and there
are a set of levels $I\subseteq n$ and a set of nodes $J\subseteq b^{< n}$ such that for every $T_0,T_1\in \binom{T}{b^{<n}}$ one
has that $c(T_0)=c(T_1)$ if and only if the $i$-th level of $T_0$ and of $T_1$ sit in the same level of $T$
(equivalently of $b^{<\omega}$) for every $i\in I$, and if for every \(t\in J\) the \(t\)th position of \(T_0\) and of \(T_1\) are the same, i.e.,
$i_{b^{<n},T_0}(t)=i_{b^{<n},T_1}(t)$.

In this paper we define properly the notion of uniform family of finite strong subtrees of a given infinite \(b\)-branching tree $U$,
and then we extend Milliken's result by proving the Pudl\'ak-R\"odl  canonization analogue for such uniform
families. More precisely,
 our main result Theorem \ref{maintheorem} in Section $6$ is the following.
  \begin{mtheo} Given any coloring of a uniform family of finite strong subtrees of $U$, there exists a
strong subtree $T$ of $U$ and a family of node-level sets, so that any two finite strong subtrees of the uniform
family have the same color if and only if they agree on one of these node-level sets.
\end{mtheo}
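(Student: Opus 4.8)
I would prove the statement for all finite sequences of trees at once, by induction on the rank $\rho$ of the uniform family $\ff$ of finite strong subtrees, with coloring $c\colon\ff\to\omega$. When $\rho=0$, so that $\ff=\{\emptyset\}$, there is nothing to do. The two external ingredients are Milliken's Ramsey theorem for finite strong subtrees \cite{Mi1}, in the product (Halpern--L\"auchli) form established in the previous sections for finite sequences of trees, and Milliken's canonization theorem \cite{Mi2}, likewise in its product form: the former is the underlying pigeonhole principle, while the latter is what canonizes the auxiliary low-complexity ``pattern'' colorings that appear in the inductive step, playing the role the Erd\H{o}s--Rado canonization theorem plays in the Pudl\'ak--R\"odl argument.

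For the inductive step I would decompose $\ff$ from the bottom. Grouping the members $\xx=(S_i)_{i\in d}$ of $\ff$ by the tuple $r=(\mathrm{root}(S_i))_{i\in d}$ of their roots and by the common level $\ell$ these roots occupy, and deleting each $\mathrm{root}(S_i)$ — which breaks $S_i$ into $b$ strong subtrees hanging below the immediate successors of its root — produces a family $\ff_r$ of ``remainders above $r$'' which is again a uniform family of finite strong subtrees, now over a longer (but still finite) sequence of trees, and of rank strictly below $\rho$; when $\rho$ is a limit these ranks are moreover cofinal in $\rho$, which is all that is needed there. The induction hypothesis then applies to each $\ff_r$ together with the coloring it inherits from $c$, giving a strong subtree $T_r$ and a canonizing family $\mathcal D_r$ of node--level sets.

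Because there are infinitely many configurations $r$, the next step is to amalgamate the data $(T_r,\mathcal D_r)$ into a single strong subtree $T$ by a fusion argument in the space of strong subtrees: one enumerates the relevant root-configurations and, at stage $k$, refines the current strong subtree above the $k$-th configuration so that it agrees with the corresponding $T_r$ and, by an application of \cite{Mi1}, stabilizes the finitely many ``comparisons'' already relevant, all while preserving the decisions of earlier stages; $T$ is then the fusion of these strong subtrees. To compare colors across distinct configurations $r$ and $r'$ one colors the configurations themselves by the pattern they induce on $T$ and appeals to the low-complexity instance of \cite{Mi2}. The node--level sets for $\ff$ on $T$ are finally obtained by prefixing, to each set in $\mathcal D_r$, the datum of $\ell$ together with the nodes of $r$ — keeping a prefixed level only when the pattern-canonization shows it to be genuinely independent of the recorded nodes — and one verifies that these assemble into a coherent family of node--level sets such that, on $T$, two members of $\ff$ receive the same color exactly when they agree on one of them.

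I expect this amalgamation to be the main obstacle. Past the routine bookkeeping of a fusion with side conditions, one has to ensure that the passage from the many $T_r$ to the single $T$ creates no new color coincidences, and — the genuinely new point compared with Pudl\'ak--R\"odl — one has to deal with the two-dimensional nature of a ``position'' in a strong subtree, which is a node together with the level it occupies. This is what forces the normalization step, again built on \cite{Mi1} and \cite{Mi2}, deciding which recorded levels are independent of which recorded nodes, so that the final family of node--level sets carries no redundancy and the equivalence relation it defines is precisely equality of color. Everywhere, it is the strong-subtree Ramsey theorem that licenses the repeated passage to smaller strong subtrees on which the auxiliary colorings become canonical.
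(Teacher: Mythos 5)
Your skeleton (simultaneous induction on rank over finite sequences of trees, decomposition at the root configurations, applying the inductive hypothesis to the remainder families $\ff_r$ of smaller rank, fusion to amalgamate the subtrees $T_r$, then prefixing root/level data to the canonizing sets) matches the paper's strategy up to the point where colors must be compared \emph{across} distinct root configurations. There, however, your proposal has a genuine gap: you propose to ``color the configurations themselves by the pattern they induce on $T$'' and canonize this with the finite-rank Milliken canonization theorem. This cannot work as stated. After the inductive step, the color of a member of $\ff$ with root configuration $r$ is determined by the value of a map $f_r$ into a family $\mathcal D_r$ of node--level sets, and when the rank is infinite these families have arbitrary countable rank and the colorings have infinitely many values. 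Whether a color occurring at $r$ coincides with a color occurring at $r'$ is therefore not a finite (or even well-defined low-complexity) ``pattern'' attached to the pair $(r,r')$: a priori some color classes could be shared and others not, in an irregular way, and no coloring of configurations amenable to the finite canonization theorem of \cite{Mi2} records this information.

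What is needed, and what the paper supplies as its central new ingredient, is the analogue of the Pudl\'ak--R\"odl two-family lemma (Lemma 8 of the paper): given two families of node--level sets generating uniform families $\mathcal F(\mathcal G_1)$, $\mathcal F(\mathcal G_2)$ with their induced ``canonical'' colorings $c'_1$, $c'_2$, there is a strong subtree on which either the two families coincide and the colorings agree, or the ranges of $c'_1$ and $c'_2$ are disjoint. It is this dichotomy, applied (with translations, and in an extended form for sequences of different lengths, Lemma 12) to the pairs of families sitting at different nodes during the fusion, that makes the cross-root comparison tractable and tells you when to absorb the root into the node set and when not to. Its proof is not a soft consequence of \cite{Mi1} and \cite{Mi2}: it proceeds by induction on the pair of ranks and rests on auxiliary avoidance lemmas (the paper's Lemmas 6 and 7, where images of maps $\lambda$ are forced off the chosen strong subtree by deleting levels, together with Lemmas 9--11 excluding coincidences of initial parts and of initial level sets). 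Without an argument of this kind your normalization step has nothing to run on, so the proposal as written does not close the inductive step.
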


The proof is by induction on the complexity of the given uniform family, and Lemma $8$ is the natural
version of the corresponding result used by Pudl\'{a}k and R\"{o}dl to derive their theorem. Roughly tells  that
given any two uniform families $\mathcal{S}$ and $\mathcal{T}$ on $U$ and two mappings $f: \mathcal{S} \to R$
and $g:\mathcal{T}\to R$, there is a strong subtree $T$ of $U$ such that either $\mathcal{S}\upharpoonright
T=\mathcal{T}\upharpoonright T$ and $f \upharpoonright (\mathcal S\upharpoonright T)=g \upharpoonright
(\mathcal T\upharpoonright T)$, or else   $f(\mathcal{S}\upharpoonright T)\cap g(\mathcal{T}\upharpoonright
T)=\emptyset$.

The paper is organized as follows:

In the beginning, Section $2$, we present the results of Erd\H{o}s-Rado and Pudl\'ak-R\"odl to provide the reader with some intuition as they form particular cases of our Main Theorem. Then, in Section $3$, we introduce the notion of a uniform family of finite strong subtrees, given an infinite $b$-branching tree $U$. We give all the elementary properties and then we state the results of Milliken. Next, in Section $4$, we show that $\mathcal{S}_{\infty}((U_i)_{i\in \omega})$, the set of all infinite strong subtrees of a $d$-sequence of $b$-branching trees, forms a topological Ramsey space, a fact that is used in the proof of our Main Theorem that is stated and proved in the last section.

\section{Canonical Ramsey theorems of  Erd\H{o}s-Rado and Pudl\'ak-R\"odl}
Let $\mathcal{G}$ be a family of finite subsets of $\omega$.  We say that $\mathcal{G}$ is {\it Ramsey} when
for every partition $\mathcal{G}=\mathcal{G}_1\cup \mathcal{G}_2$, there is an infinite subset $X\subseteq
\omega$ and some $i\in \{1,2\}$ such that the restriction $\mathcal{G}_i\upharpoonright X:=\{ s\in
\mathcal{G}_i\,:\, s\subset X\}$ of $\mathcal G_i$ to $X$ is empty.   As one can expect,  not just any family of
finite subsets is Ramsey. A trivial example of a non Ramsey family is $[\omega]^{\le n}:=\{s\subset \omega\,:\,|s|\le n\}$ for $n>
1$.    Remarkably, C.
 Nash-Williams intrinsically characterizes the Ramsey property as follows.
\begin{theorem}[Nash-Williams, \cite{Na-Wi}] Let    $\mathcal{G}$ be a family of finite subsets of $\omega$.
\begin{enumerate}
\item[(a)] Suppose  that  $\mathcal{G}\upharpoonright X$ is {\it thin}; that is,
 there are no $s,t\in \mathcal{G}\upharpoonright X$ such that $s$ is a proper initial segment of $t$.
 Then $\mathcal G$ is Ramsey.
 \item[(b)] Suppose that $\mathcal G$ is Ramsey. Then there is some $X$ such that $\mathcal G \upharpoonright
 X$ is thin.
\end{enumerate}
\end{theorem}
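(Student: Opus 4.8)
My plan splits along the two clauses, and I deal with the easy converse (b) first. Assuming $\mathcal{G}$ is Ramsey, I partition it as $\mathcal{G}=\mathcal{G}_0\cup\mathcal{G}_1$, where $\mathcal{G}_1$ collects those $s\in\mathcal{G}$ possessing a \emph{proper} initial segment $s'\sqsubset s$ with $s'\in\mathcal{G}$, and $\mathcal{G}_0=\mathcal{G}\setminus\mathcal{G}_1$ is the set of $\sqsubseteq$-minimal members of $\mathcal{G}$. The Ramsey property then yields an infinite $X$ with one of the two restrictions empty. If $\mathcal{G}_1\upharpoonright X=\emptyset$, then no member of $\mathcal{G}\upharpoonright X$ is a proper initial segment of another (the longer one would lie in $\mathcal{G}_1\upharpoonright X$), so $\mathcal{G}\upharpoonright X$ is thin. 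If instead $\mathcal{G}_0\upharpoonright X=\emptyset$, then any $s\in\mathcal{G}$ with $s\subseteq X$ would have a proper initial segment $s_1\in\mathcal{G}$, again with $s_1\subseteq X$ and again non-minimal, and iterating contradicts $|s|>|s_1|>\cdots$; hence $\mathcal{G}\upharpoonright X=\emptyset$, which is vacuously thin. Either way (b) holds.

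For (a) it suffices to prove that a thin $\mathcal{G}$ is Ramsey: a general thin restriction $\mathcal{G}\upharpoonright X$ is itself a thin family, and in testing the Ramsey property of $\mathcal{G}$ one may first pass to $X$ and then invoke this special case inside $X$. So fix a thin $\mathcal{G}$ and a partition $\mathcal{G}=\mathcal{G}_0\cup\mathcal{G}_1$; we may assume $\emptyset\notin\mathcal{G}$, the opposite case being trivial by thinness. I would run Galvin's method of \emph{combinatorial forcing}: for a finite set $s$ and an infinite set $M$ with $\max s<\min M$, say that $M$ \emph{accepts} $s$ if every $t\in\mathcal{G}$ with $s\sqsubseteq t\subseteq s\cup M$ lies in $\mathcal{G}_0$, that $M$ \emph{rejects} $s$ if no infinite $N\subseteq M$ accepts $s$, and that $M$ \emph{decides} $s$ if it does one of the two. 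Acceptance, rejection, and hence decision are inherited by infinite subsets, and a routine diagonalization produces an infinite $M$ such that for every finite $s\subseteq M$ the tail $\{m\in M:m>\max s\}$ decides $s$.

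Fix such an $M$. If $M$ accepts $\emptyset$, then every $t\in\mathcal{G}$ with $t\subseteq M$ lies in $\mathcal{G}_0$, that is, $\mathcal{G}_1\upharpoonright M=\emptyset$, and we are done. If $M$ rejects $\emptyset$, the engine is the following propagation step: \emph{if $M$ rejects a finite $s\notin\mathcal{G}$, then $M$ rejects $s\cup\{n\}$ for all but finitely many $n\in M$.} Indeed, if the exceptional $n$'s formed an infinite set $A$, then $A$ itself would accept $s$ — any witnessing $t\in\mathcal{G}$ properly end-extends $s$ (as $s\notin\mathcal{G}$), so its least point above $\max s$ lies in $A$, and acceptance of $s$ together with that point forces $t\in\mathcal{G}_0$ — contradicting that $M$ rejects $s$. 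Iterating this inside $M$ by a further fusion (at each step discarding, for every stem currently rejected, the finitely many points the step forbids) yields an infinite $N\subseteq M$ with the property that for every $t\in\mathcal{G}$ with $t\subseteq N$ the tail of $N$ past $\max t$ rejects $t$; one sees this by running the chain $\emptyset\sqsubset\cdots\sqsubset t\setminus\{\max t\}\sqsubset t$ of initial segments of $t$, all of which lie outside $\mathcal{G}$ by thinness. Now for $t\in\mathcal{G}$, thinness says that no proper end-extension of $t$ lies in $\mathcal{G}$, so ``$N'$ accepts $t$'' holds exactly when $t\in\mathcal{G}_0$, for every infinite $N'$; hence ``the tail of $N$ rejects $t$'' forces $t\in\mathcal{G}_1$. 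Thus $\mathcal{G}_0\upharpoonright N=\emptyset$, as required.

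The step I expect to be the real work is the bookkeeping for (a): setting up the two fusion arguments and, above all, correctly handling the degenerate possibility that the current finite stem is itself a member of $\mathcal{G}$. It is precisely thinness that tames this — a member of $\mathcal{G}$ has no proper end-extension in $\mathcal{G}$, so for such a stem ``accepts'' collapses to ``belongs to $\mathcal{G}_0$'' and there is nothing to propagate below it — and without thinness the conclusion genuinely fails (the family $[\omega]^{\le 2}$ is not Ramsey). Alternatively, one can bypass the explicit forcing by applying the Galvin--Prikry theorem for open subsets of $[\omega]^{\omega}$ in turn to the two open sets $\{X:\exists\,s\sqsubseteq X,\ s\in\mathcal{G}_i\}$, $i=0,1$: thinness makes the initial segment of $X$ lying in $\mathcal{G}$ unique, which excludes the alternative in which both applications stay inside the open set, so one of the ``outside'' alternatives delivers an infinite $X$ with $\mathcal{G}_i\upharpoonright X=\emptyset$.
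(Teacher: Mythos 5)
Your proposal is correct, but there is nothing in the paper to measure it against: Theorem~1 is quoted as the classical Nash--Williams theorem with a citation to \cite{Na-Wi}, and the paper gives no proof of it (its own arguments only use the statement, and the tree analogues in Section~4 are obtained from the topological Ramsey space machinery, not by reproving this). What you wrote is essentially the standard argument from the literature: part~(b) by splitting $\mathcal{G}$ into its $\sqsubseteq$-minimal members and the rest and reading off the two alternatives, and part~(a) by Galvin-style combinatorial forcing -- accept/reject/decide, a fusion producing a set deciding all its finite subsets, the propagation-of-rejection step (valid precisely because the stem lies outside $\mathcal{G}$), and a second fusion -- with thinness invoked at exactly the two points where it is needed: every proper initial segment of a member of $\mathcal{G}$ is outside $\mathcal{G}$, and for $t\in\mathcal{G}$ acceptance of $t$ collapses to $t\in\mathcal{G}_0$. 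The reduction of (a) to thin families, the degenerate cases ($\emptyset\in\mathcal{G}$, empty restriction in (b)), and the alternative derivation from the Galvin--Prikry theorem for open sets are all handled soundly, the last at the cost of appealing to a stronger theorem.
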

Given a family $\mathcal G$ on $\omega$ and  $n\in \omega$, let
$$\mathcal{G}(n)=\{ A\subset \omega |\, \{n\}\cup A\in \mathcal{G} \text{ and } n<\min A\}.$$
We pass now to recall the notion of $\alpha$-uniform families on some infinite set $X$.

\begin{definition}[Pudl\'ak-R\"odl] Let $\mathcal G $ be a family of finite sets of an infinite subset
$X$ of $\omega$, and let $\alpha$ be a countable ordinal number. The family $\mathcal G$ is called
$\alpha$-uniform when
\begin{enumerate}
\item[(a)] $\mathcal{G}=\{\emptyset\}$ if $\alpha=0$;
\item[(b)] $\emptyset \notin \mathcal{G}$, $\mathcal{G}(n)$ is $\beta$-uniform on $X\setminus (n+1)$ for every $n\in X$, if $\alpha=\beta+1$;
\item[(c)] $\emptyset \notin \mathcal{G}$, there is an increasing sequence $(\alpha_n)_n$ with limit $\alpha$ such that each $\mathcal {G}(n)$ is
$\alpha_n$-uniform on $X\setminus (n+1)$, if $\alpha$ is a limit ordinal.
\end{enumerate}

 \end{definition}
It is easy to see that the only   $n$-uniform families on $X$ are $[X]^n:=\{s\subset \omega\,:\, |s|=n\}$ for
$n\in \omega$. For $\alpha\ge \omega $ this is not the case (consider for example the two $\omega$-uniform
families on $\omega$ $\{s\subset \omega\,:\, |s|=\min s +1\}$ and $\{s\subset \omega\,:\, |s|=\min s +2\}$).

Notice that if $\mathcal{G}$ is an $\alpha$-uniform family on $X$, then for any infinite subset $Y$ of $X$,
the restriction $\mathcal{G}\upharpoonright Y$ is also an $\alpha$-uniform family on $Y$. Also if
$\mathcal{G}$ is a uniform family, then it is  Nash-Williams as well. The relevance of uniform families is
given by the following.

\begin{lemma}\cite{Pu-Ro} For every  family $\mathcal{G}$ on $X$ there exists $Y\subseteq X$ such that either
   $\mathcal{G}\upharpoonright Y=\emptyset$ or $\mathcal{G}\upharpoonright Y$ contains a uniform family on $Y$.
 \end{lemma}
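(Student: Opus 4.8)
The plan is to reduce, in two steps, to the case of a \emph{barrier} --- a thin family on a set $Z$ such that every infinite subset of $Z$ has an initial segment in it --- and then to induct on an ordinal rank, the inductive step being a fusion argument. If $\emptyset\in\mathcal G$ we are done, since $\{\emptyset\}\subseteq\mathcal G$ is a $0$-uniform family on $Y:=X$; so assume $\emptyset\notin\mathcal G$. Let $\mathcal G^{\min}$ be the set of $g\in\mathcal G$ having no proper initial segment in $\mathcal G$. Then $\mathcal G^{\min}$ is thin, $\mathcal G^{\min}\subseteq\mathcal G$, and $\mathcal G^{\min}\upharpoonright Y=\emptyset$ if and only if $\mathcal G\upharpoonright Y=\emptyset$, since every member of $\mathcal G\upharpoonright Y$ has a shortest initial segment and that segment lies in $\mathcal G^{\min}\upharpoonright Y$; hence we may assume $\mathcal G$ is thin. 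Next, a Nash--Williams/Galvin type dichotomy --- applied to the open set $\{Z\in[X]^{\omega}:Z\text{ has an initial segment in }\mathcal G\}$ --- yields an infinite $Y\subseteq X$ for which either no member of $\mathcal G$ is contained in $Y$, so $\mathcal G\upharpoonright Y=\emptyset$ and we are done, or else every infinite subset of $Y$ has an initial segment in $\mathcal G$. Replacing $X$ by such a $Y$, we may assume $\mathcal G$ is a barrier.

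Now I would induct on the rank $\alpha$ of the tree $\widehat{\mathcal G}$ of all initial segments of members of $\mathcal G$, ordered by end-extension; this tree is well-founded precisely because $\mathcal G$ is a barrier, as an infinite branch would be an infinite subset of $X$ with no initial segment in the thin family $\mathcal G$. If $\alpha=0$ then $\emptyset$ is the only node, so $\mathcal G=\{\emptyset\}$, which is $0$-uniform. If $\alpha>0$ then $\emptyset\notin\mathcal G$, and for every $n\in X$ the family $\mathcal G(n)$ is again a thin barrier, now on $X\setminus(n+1)$ --- an initial segment of $\{n\}\cup Z$ lying in $\mathcal G$ has the form $\{n\}\cup A$ with $A$ an initial segment of $Z$ --- and its tree of initial segments is the part of $\widehat{\mathcal G}$ above $\{n\}$, so it has rank $<\alpha$; the same persists after restricting $\mathcal G(n)$ to any infinite subset. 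Since a barrier is never empty on an infinite set, the induction hypothesis applies to such restrictions and produces uniform subfamilies of $\mathcal G(n)$.

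It remains to amalgamate these. I would recursively build $Y=\{y_0<y_1<\cdots\}$ together with a decreasing chain of infinite sets $W_{-1}=X\supseteq W_0\supseteq W_1\supseteq\cdots$: given $W_{k-1}$, put $y_k=\min W_{k-1}$ and, applying the induction hypothesis to $\mathcal G(y_k)\upharpoonright(W_{k-1}\setminus\{y_k\})$, choose an infinite $W_k\subseteq W_{k-1}\setminus\{y_k\}$ carrying a uniform family $\mathcal U_k\subseteq\mathcal G(y_k)$, of uniformity rank $\gamma_k<\alpha$. Then $Y\cap(y_k,\infty)\subseteq W_k$, so $\mathcal U_k\upharpoonright(Y\cap(y_k,\infty))$ is a $\gamma_k$-uniform family still contained in $\mathcal G(y_k)$. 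Since the ordinals are well-ordered, after passing to a subsequence of $Y$ I may assume $(\gamma_k)_k$ is nondecreasing. If it is eventually equal to some $\gamma$, I discard the finitely many exceptional $y_k$ and set $\mathcal U:=\bigcup_k\{\{y_k\}\cup A:A\in\mathcal U_k,\;A\subseteq Y\cap(y_k,\infty)\}$; then $\mathcal U(y_k)=\mathcal U_k\upharpoonright(Y\cap(y_k,\infty))$ is $\gamma$-uniform for each $k$, so $\mathcal U$ is $(\gamma+1)$-uniform on $Y$. If $(\gamma_k)_k$ is not eventually constant, I instead pass to a subsequence of $Y$ along which $\gamma_k$ is strictly increasing to some limit $\gamma$, and the analogous $\mathcal U$ is $\gamma$-uniform by clause (c) of the definition. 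In either case $\emptyset\notin\mathcal U$, $\mathcal U\subseteq\mathcal G$, and every member of $\mathcal U$ is a subset of $Y$, so $\mathcal G\upharpoonright Y$ contains the uniform family $\mathcal U$ on $Y$.

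The two reductions are routine once the dichotomy is in hand; the step I expect to be the real obstacle is the amalgamation, where one must interleave the choice of $Y$ with the inductively produced restrictions so that a uniform subfamily of \emph{each} $\mathcal G(y_k)$ survives on the final $Y$, and then coax the ranks $\gamma_k$ into the rigid shape --- all equal, or strictly increasing to a limit --- imposed by the definition of a uniform family.
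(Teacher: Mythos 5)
The paper states this lemma without proof, quoting it from Pudl\'ak and R\"odl, so there is no in-paper argument to compare against; judged on its own, your proof is correct and follows the standard route: reduce to a thin family via minimal elements, apply the Galvin--Nash-Williams dichotomy to reduce to a (thin) barrier, and then induct on the rank of the well-founded tree of initial segments, amalgamating the inductively obtained uniform families $\mathcal{U}_k\subseteq\mathcal{G}(y_k)$ by a fusion of the sets $W_k$ and stabilizing the ranks $\gamma_k$ by passing to a nondecreasing (then constant or strictly increasing) subsequence, which matches clauses (b) and (c) of the definition of uniformity. The key verifications are all in place: $\mathcal{G}(n)$ is again a thin barrier of strictly smaller tree rank, restrictions preserve uniformity and its rank, and since every member $\{y_j\}\cup A$ of your amalgamated family has minimum $y_j$, one indeed has $\mathcal{U}(y_k)=\mathcal{U}_k\upharpoonright(Y\cap(y_k,\infty))$, so the amalgam is uniform on $Y$ and contained in $\mathcal{G}\upharpoonright Y$.
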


  To state the canonization result by Pudl\'ak and R\"odl we need the following definition which will be later extended in Definition \ref{canonical}  to the context of trees.

\begin{definition}
Let  $\mathcal{G}$ be a uniform family on some set \(X\). A coloring \(c\) of \(\mathcal{G}\) is called a \emph{canonical coloring} of  $\mathcal{ G }$ if there exists a uniform family $\mathcal{T}$ on \(X\) and a mapping $f:\mathcal{G}\to \mathcal{T}$ such that
\begin{enumerate}
\item[(a)] $f$ is \emph{inner}, i.e.  $f(s)\subseteq s$ for every $s\in \mathcal{G}$.
\item[(b)] For every $s,t\in \mathcal{G}$, $c(s)=c(t)$ if and only if $f(s)=f(t)$.
\end{enumerate}
\end{definition}
Notice that  the condition  (b) above  is equivalent to say that there exists  a one-to-one coloring $\phi$ of $\mathcal{T}$ with the same list of colors as that for the coloring of $\mathcal{G}$, such that $c(s)=\phi(f(s))$ for every $s\in \mathcal{G}$.

Roughly speaking $c$ is a canonical coloring of $\mathcal{G}$ if the color of each $s\in \mathcal{G}$ is determined by some subset $t$ of $s$ in a minimal way.
\begin{theorem}[Pudl\'ak-R\"odl,\cite{Pu-Ro}] \label{pud-rod} For every
coloring $c$ of a  uniform family $\mathcal{G}$ on   $X$, there exists $Y\subseteq X$  such that $c\upharpoonright (\mathcal{G}\upharpoonright Y)$ is a canonical coloring of $\mathcal{G}\upharpoonright Y$.
\end{theorem}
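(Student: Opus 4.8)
The plan is to prove the Pudl\'ak--R\"odl canonization theorem by induction on the rank $\alpha$ of the uniform family $\mathcal{G}$, following the standard strategy of reducing an arbitrary coloring to a canonical one via iterated applications of the Ramsey-theoretic pigeonhole principle (here, the Nash--Williams/Galvin--Prikry theorem, valid since uniform families are thin hence Ramsey). The base case $\alpha=0$ is trivial, since then $\mathcal{G}=\{\emptyset\}$ and any coloring is canonical with $f$ the empty map. For the successor and limit steps one works relative to the first element $n=\min s$ of each $s\in\mathcal{G}$: for each $n\in X$ the trace $\mathcal{G}(n)$ is a uniform family of strictly smaller rank on $X\setminus(n+1)$, so by the inductive hypothesis and a standard diagonalization (fusion) over $n$ one can first pass to $Y\subseteq X$ on which, for every $n\in Y$, the coloring $c_n(A):=c(\{n\}\cup A)$ of $\mathcal{G}(n)\upharpoonright Y$ is canonized by an inner map $f_n$ into a uniform family $\mathcal{T}_n$.

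The core of the argument is then to decide, by Ramsey's theorem, how the color of $s=\{n\}\cup A$ depends on $n$ itself. Concretely, I would colour pairs (or finite tuples) of ``candidate stems'' to determine, after passing to a further infinite subset, whether or not $\min s$ is relevant, i.e.\ whether changing $n$ while keeping a fixed abstract ``shape'' of the tail can change the colour. This gives a dichotomy: either $\min s$ is part of the canonical data (so $f(s)$ should include $n$), or it is not, in which case the colour of $s$ depends only on data coming from $A$. One must also compare the maps $f_n$ across different $n$ after identifying $\mathcal{G}(n)\upharpoonright Y$ with a single abstract uniform family via the order-isomorphism of $Y\setminus(n+1)$ with $Y$; a Ramsey argument on the finitely many ``types'' of such identified maps lets one pass to $Y'$ where all the $f_n$ agree under this identification, yielding a single coherent inner map $f$ on $\mathcal{G}\upharpoonright Y'$ and a single target uniform family $\mathcal{T}$. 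Combining the ``is $\min s$ relevant'' bit with $f$ produces the desired canonical coloring: $c(s)=c(t)$ iff $f(s)=f(t)$ together with agreement of minima when the minimum was declared relevant.

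The main obstacle is the bookkeeping in the limit case and the coherence of the $f_n$'s: when $\alpha$ is a limit, the ranks $\alpha_n$ of $\mathcal{G}(n)$ are unbounded, so the inductive hypothesis is applied with different parameters for each $n$, and one needs a fusion/diagonal argument (shrinking $X$ at stage $n$ using only the first $n$ elements already fixed) to obtain a single $Y$ working simultaneously for all $n$. The delicate point is that the target families $\mathcal{T}_n$ a priori differ, so before one can speak of a single $\mathcal{T}$ one must know that, after restriction, all $\mathcal{T}_n$ are ``the same'' uniform family under the canonical identification of tails; establishing this requires an extra Ramsey step partitioning $Y$ according to the isomorphism type of $(\mathcal{T}_n,f_n)$ — and checking that finitely many types suffice at each level. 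Once these coherence issues are handled, verifying conditions (a) and (b) of the definition of canonical coloring is routine. I expect the tree-analogue developed later in the paper to follow the same skeleton, with Milliken's tree theorem replacing the Galvin--Prikry theorem and strong subtrees replacing infinite subsets.
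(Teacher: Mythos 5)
Your skeleton --- induction on the rank, canonizing each trace $\mathcal{G}(n)$ by the inductive hypothesis, fusing over $n$, and then deciding whether $\min s$ belongs to the canonical data --- is the right one, and it is the skeleton the paper attributes to Pudl\'ak and R\"odl. But there is a genuine gap at the merging step. The paper stresses that the proof ``crucially uses'' the comparison lemma (Lemma 2 of the paper): for two uniform families $\mathcal{G}_1,\mathcal{G}_2$ on $Y$ with one-to-one maps $\phi_1,\phi_2$, there is an infinite $X\subseteq Y$ on which either $\mathcal{G}_1\upharpoonright X=\mathcal{G}_2\upharpoonright X$ and $\phi_1=\phi_2$ there, or $\phi_1(\mathcal{G}_1\upharpoonright X)\cap\phi_2(\mathcal{G}_2\upharpoonright X)=\emptyset$. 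Applied to the pairs $(\mathcal{T}_n,\phi_n)$ and $(\mathcal{T}_m,\phi_m)$ attached to different stems $n\neq m$, this dichotomy is what allows one to conclude, after restriction and a fusion over pairs, that the colors arising from different minima are either governed by literally the same target family and the same map (so $\min s$ can be dropped from $f(s)$) or are pairwise disjoint (so $\min s$ must be recorded in $f(s)$). Your substitute --- identifying the $(\mathcal{T}_n,f_n)$ along the order isomorphism of the tails and invoking ``finitely many types'' plus Ramsey --- does not do this job. For families of infinite rank there is no finite bound on the isomorphism types of the pairs $(\mathcal{T}_n,f_n)$ (already in the limit case the ranks $\alpha_n$ are unbounded), so there is no finite coloring of stems, or of pairs of stems, to which Ramsey's theorem can be applied directly.

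More importantly, even if all the $f_n$ did agree under your identification, that says nothing about whether the actual color values $\phi_n(f_n(A))$ and $\phi_m(f_m(B))$ coincide for $n\neq m$: cross-stem coincidences of colors could occur for some pairs of tails and fail for others, and your proposed pair-coloring ``does changing $n$ with the same shape of tail change the colour'' then has the answer ``sometimes,'' from which no canonical form follows. Eliminating this ``sometimes'' behavior --- i.e.\ forcing the clean equal-or-disjoint alternative --- is precisely the content of the comparison lemma, which is a substantive statement proved by its own transfinite induction and fusion argument, not a routine bookkeeping step. This is also how the paper itself proceeds in the tree setting: its main theorem is obtained by canonizing each $\mathcal{G}(t)$ inductively and then invoking the tree analogue of that lemma (Lemma 8) to compare the data attached to different nodes. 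Without this lemma, or an equivalent argument supplying the equal-or-disjoint dichotomy, your outline does not close.
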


Given $A=(a_0,\dots,a_{n-1)},B=(b_0,\dots,b_{n-1}) \in [\omega]^n$ and $I\subseteq n$ we write $A:I=B:I$ to denote that $\{ a_i:i\in I\}=\{b_i:i\in I\}$.
In particular, for uniform families of finite rank the Erd\H{o}s-Rado Theorem follows from the Pudl\'ak-R\"odl Theorem.
\begin{theorem}[Erd\H{o}s-Rado,\cite{Er-Ra}] Given  $n\in \omega$ and a  mapping $c:[\omega]^n\to R$, there
exist an infinite subset $X\subseteq \omega$ and a finite set
 $I\subseteq n$ such that for any $A,B\in [X]^n$ one has $c(A)=c(B)$ if and only if  $A:I=B:I$.
\end{theorem}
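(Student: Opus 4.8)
The plan is to deduce this directly from the Pudl\'ak--R\"odl Theorem \ref{pud-rod}, using the observation recorded above that $[\omega]^n$ is the (unique) $n$-uniform family on $\omega$, and in particular a uniform family. First I would apply Theorem \ref{pud-rod} to the given coloring $c$ of $[\omega]^n$. This yields an infinite set $Y\subseteq\omega$, a uniform family $\mathcal{T}$ on $Y$, and an inner map $f:[Y]^n\to\mathcal{T}$ (so $f(A)\subseteq A$ for every $A$) such that, for all $A,B\in[Y]^n$, one has $c(A)=c(B)$ if and only if $f(A)=f(B)$.

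Next I would convert the inner map $f$ into a choice of relative positions. For $A=\{a_0<\dots<a_{n-1}\}\in[Y]^n$ set $I_A:=\{\,i<n\,:\,a_i\in f(A)\,\}\subseteq n$; since $f$ is inner this set completely records $f(A)$, namely $f(A)=\{\,a_i\,:\,i\in I_A\,\}$. The assignment $A\mapsto I_A$ is a coloring of $[Y]^n$ by the finitely many subsets of $n$. Because $[Y]^n$ is thin, it is Ramsey by part (a) of the Nash--Williams theorem quoted above, and killing one color class at a time over these finitely many colors produces an infinite $X\subseteq Y$ together with a fixed $I\subseteq n$ such that $I_A=I$ for every $A\in[X]^n$. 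Here one uses that for $A\subseteq X$ the relative positions of the elements of $A$ are the same whether $A$ is viewed inside $X$ or inside $\omega$, and that $\mathcal{T}\upharpoonright X$ together with $f\upharpoonright[X]^n$ still witness the conclusion of Theorem \ref{pud-rod} for $[X]^n$, since uniform families restrict to uniform families.

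Finally, on $[X]^n$ we have $f(A)=\{\,a_i\,:\,i\in I\,\}$ for every $A=\{a_0<\dots<a_{n-1}\}$, so $f(A)=f(B)$ holds precisely when $\{\,a_i\,:\,i\in I\,\}=\{\,b_i\,:\,i\in I\,\}$, i.e. precisely when $A:I=B:I$. Combining this with the defining property of $f$ from the first step gives $c(A)=c(B)$ if and only if $A:I=B:I$ for all $A,B\in[X]^n$, which is the claimed statement.

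There is no genuine obstacle here: the entire content is carried by Theorem \ref{pud-rod}, and the only step needing any care is the passage from the abstract inner map to the concrete fixed position set $I$, which is just a routine appeal to the finite-color Ramsey property of $[\omega]^n$ available from the Nash--Williams theorem; everything else is bookkeeping about restrictions of uniform families and the translation between the "inner map" and "relative positions" formulations of canonicity.
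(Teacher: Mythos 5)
Your argument is correct and follows essentially the same route as the paper: apply the Pudl\'ak--R\"odl Theorem, encode the inner map as a coloring of $[Y]^n$ by position sets $I_A\subseteq n$, and stabilize this finite coloring to obtain a fixed $I$ with $f(A)=\{a_i: i\in I\}$, whence $c(A)=c(B)$ iff $A:I=B:I$. The only cosmetic difference is that you obtain the stabilization by iterating the Nash--Williams theorem on the thin family $[Y]^n$ rather than quoting the classical Ramsey theorem directly, which amounts to the same thing.
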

The proof goes as follows. Use the Pudl\'ak-R\"odl Theorem to find some subset \(X\), some $k\le n$ and some inner
\(\phi:[X]^n\to [X]^{k}\) such that $c(s)=c(t)$ iff $\phi(s)=\phi(t)$. Now consider the finite coloring
\(d:[X]^n\to \mathcal{P}(n)\) defined by \(d(s):=I\subseteq n$ such that $s:I=\phi(s)$. By the Ramsey Theorem, there is a
subset \(Y\) of \(X\) and $I_0\subseteq n$ such that \(d\) is constant on \([Y]^{n}\) with value $I_0$. This just
means that $A$ and $B$ in $[Y]^n$ have the same $c$-color  if and only if $A$ and $B$ agree on the relative
positions given by $I_0$, denoted by
$$A:I_0=B:I_0.$$

The Pudl\'ak-R\"odl Theorem was proved by transfinite induction on the rank of the uniform family,
and it crucially uses the following lemma, that we will use later in our paper.
\begin{lemma}\cite{Pu-Ro}
Let $\mathcal{G}_1$ and $\mathcal{G}_2$ be two uniform families on $Y\subseteq \omega$, $\phi_1,\phi_2$ one- to-one
mappings defined on $\mathcal{G}_1$ and $\mathcal{G}_2$ respectively. Then there exists an infinite subset
$X\subseteq Y$ such that one of the following two statements holds:
\begin{enumerate}
\item[(1)] $\mathcal{G}_1\upharpoonright X=\mathcal{G}_2\upharpoonright X$ and $\phi_1(A)=\phi_2(A)$ for every $A\in \mathcal{G}_1\upharpoonright X$.
\item[(2)] $\phi_1(\mathcal{G}_1\upharpoonright X)\cap \phi_2(\mathcal{G}_2\upharpoonright X)=\emptyset$.
\end{enumerate}

\end{lemma}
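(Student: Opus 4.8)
The plan is to argue in two phases: a first phase of open (Nash--Williams) Ramsey reductions that either yields conclusion~(1) outright or else normalizes the situation so that we are aiming for~(2); and a second phase, a transfinite induction on the ranks of $\mathcal{G}_1,\mathcal{G}_2$, in which conclusion~(2) is produced by a combinatorial fusion.

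\emph{Phase 1.} Since $\mathcal{G}_1,\mathcal{G}_2$ are uniform (hence thin), every infinite $Z\subseteq Y$ has a unique initial segment $A_Z\in\mathcal{G}_1$ and a unique initial segment $B_Z\in\mathcal{G}_2$, and $A_Z,B_Z$ are $\sqsubseteq$-comparable, being both initial parts of $Z$. The assignment sending $Z$ to the pair consisting of the $\sqsubseteq$-pattern of $A_Z$ versus $B_Z$ and the truth value of ``$\phi_1(A_Z)=\phi_2(B_Z)$'' depends only on a finite initial segment of $Z$; applying the Nash--Williams theorem finitely many times to the associated thin families of ``deciding'' initial segments, we pass to an infinite $X_1\subseteq Y$ on which this assignment is constant. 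If on $X_1$ we always have $A_Z=B_Z$ and $\phi_1(A_Z)=\phi_2(B_Z)$, then, letting $Z$ range over all infinite subsets of $X_1$, we get $\mathcal{G}_1\upharpoonright X_1=\mathcal{G}_2\upharpoonright X_1$ with $\phi_1,\phi_2$ agreeing on it, which is conclusion~(1) with $X=X_1$. Otherwise, injectivity of $\phi_1,\phi_2$ excludes the case ``$A_Z\sqsubsetneq B_Z$ for all $Z$ (or $\sqsupsetneq$ for all $Z$), with $\phi_1(A_Z)=\phi_2(B_Z)$ for all $Z$'': if, say, $A_Z\sqsubsetneq B_Z$ always, fix one value $A$ of $A_Z$ and let $Z$ vary over infinite subsets of $X_1$ with $A_Z=A$; then $B_Z$ properly extends $A$ and takes infinitely many values, so $\phi_2(B_Z)$ takes infinitely many values while $\phi_1(A_Z)=\phi_1(A)$ is fixed, a contradiction. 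Hence we may assume $\phi_1(A_Z)\neq\phi_2(B_Z)$ for every infinite $Z\subseteq X_1$, and we now seek $X\subseteq X_1$ realizing conclusion~(2).

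\emph{Phase 2.} I would induct on the natural (Hessenberg) sum $\operatorname{rank}(\mathcal{G}_1)\oplus\operatorname{rank}(\mathcal{G}_2)$. The base case, where one of the families is $\{\emptyset\}$, is immediate: if $\mathcal{G}_1=\{\emptyset\}$ and $\phi_1(\emptyset)=v$, then either $v\notin\phi_2(\mathcal{G}_2)$, or the unique $B_0$ with $\phi_2(B_0)=v$ is nonempty and we delete one of its elements to kill it, or $B_0=\emptyset$ and $\phi_1,\phi_2$ agree on $\{\emptyset\}$. For the inductive step we build $X=\{n_0<n_1<\cdots\}\subseteq X_1$ together with a decreasing sequence $H_0\supseteq H_1\supseteq\cdots$ of infinite subsets of $X_1$ by combinatorial forcing. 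The feature that makes this manageable is that $\phi_1,\phi_2$ are one-to-one, so any \emph{finite} set $F$ of ``forbidden colours'' is the $\phi_i$-image of only finitely many members of $\mathcal{G}_i$, which we can simply avoid containing when we thin. A potential collision $\phi_1(A)=\phi_2(B)$ with $A\in\mathcal{G}_1$, $B\in\mathcal{G}_2$ is then killed according to its type: if $\min A=\min B=n_k$ we invoke the inductive hypothesis for $\mathcal{G}_1(n_k)$ and $\mathcal{G}_2(n_k)$ with the maps $C\mapsto\phi_i(\{n_k\}\cup C)$ (their natural sum is strictly smaller), noting that the Phase~1 normalization makes their ``conclusion~(1)'' impossible and hence yields disjointness; if $A,B\subseteq\{n_0,\dots,n_k\}$ with at least one of $\min A,\min B$ among the earlier $n_j$'s there are only finitely many such pairs at stage $k$, so we choose $n_k$ to avoid the finitely many bad values and then thin $H_{k+1}$; and a ``cross'' collision with $\min A\neq\min B$ is handled by the inductive hypothesis in the asymmetric form ``$\mathcal{G}_1(n)$ against $\mathcal{G}_2\upharpoonright(X_1\setminus(n+1))$'' (again a strictly smaller natural sum), dovetailed over $n$.

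\emph{Main obstacle.} The hard part is exactly the organization of this fusion: arranging the choices of the $n_k$ and the thinnings so that every cross-collision is eventually killed and no commitment made at an earlier stage is destroyed. As in Pudl\'ak--R\"odl's proof that uniform families are Ramsey, this is the place where one must carry along the right invariant---that at every stage the $\phi_1$-image of the already-completed elements of $\mathcal{G}_1$ is disjoint from the $\phi_2$-image of every element of $\mathcal{G}_2$ contained in $\{n_0,\dots,n_k\}\cup H_{k+1}$, and symmetrically---and where the asymmetric instances of the inductive hypothesis are consumed; granting the invariant, taking $X=\{n_k:k\in\omega\}$ gives conclusion~(2), and the remaining verifications are routine.
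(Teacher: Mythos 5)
The paper does not prove this lemma at all (it is quoted from Pudl\'ak--R\"odl), so I am judging your argument on its own merits. Your Phase~1 is sound: uniform families are fronts, so $A_Z,B_Z$ exist and are unique, the pattern map is determined by a finite initial segment, and the injectivity argument ruling out the constant pattern ``$A_Z\sqsubsetneq B_Z$ with $\phi_1(A_Z)=\phi_2(B_Z)$'' is correct. The symmetric half of Phase~2 is also right: if the inductive hypothesis for $(\mathcal{G}_1(n_k),\mathcal{G}_2(n_k))$ returned alternative~(1) on some infinite $H$, then for $Z=\{n_k\}\cup Z'$ with $Z'\subseteq H$ the two initial segments $A_Z,B_Z$ would coincide and get the same colour, contradicting the Phase~1 normalization, so that application does yield disjointness. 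The genuine gap is in the cross case: for the asymmetric pair $\bigl(\mathcal{G}_1(n),\,\mathcal{G}_2\upharpoonright(X_1\setminus(n+1))\bigr)$ nothing rules out alternative~(1), because your normalization only speaks about pairs $A_Z,B_Z$ attached to a \emph{common} $Z$, and such a pair can never witness an identity $\phi_1(\{n\}\cup C)=\phi_2(C)$ with $\min C>n$. Concretely, take $\mathcal{G}_1=[\omega]^2$, $\mathcal{G}_2=[\omega]^1$, $\phi_2(\{m\})=m$, $\phi_1(\{0,m\})=m$, and $\phi_1$ injective with values outside $\omega$ on pairs not containing $0$: at $n=0$ the asymmetric inductive hypothesis returns agreement on \emph{every} infinite $H$, the collisions $\phi_1(\{0,m\})=\phi_2(\{m\})$ survive all thinning of the tail, and conclusion~(2) can only be reached by discarding the point $0$ itself. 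So your statement that a cross collision ``is handled by the inductive hypothesis \dots dovetailed over $n$'' tacitly assumes the disjointness alternative and is not justified as written; this is exactly the one place where an idea beyond the Phase~1 normalization is needed, not routine bookkeeping.

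The repair is to build a rejection mechanism into the fusion: if for the current candidate $n$ one of the two asymmetric applications returns agreement on $H$, do not put $n$ into $X$, but still shrink the ambient set to $H$; then observe that injectivity forbids two rejected candidates $n<n'$ of the same type inside the nested sets, since for $C\in\mathcal{G}_2$ contained in the common tail with $\min C>n'$ one would get $\phi_1(\{n\}\cup C)=\phi_2(C)=\phi_1(\{n'\}\cup C)$ with $\{n\}\cup C\neq\{n'\}\cup C$. Hence at most two candidates are ever discarded and the construction still produces an infinite $X$ on which every collision, classified by $\min(A\cup B)$, has been killed at its own stage. Relatedly, your intermediate case ``$A,B\subseteq\{n_0,\dots,n_k\}$: choose $n_k$ avoiding finitely many bad values'' does not cover pairs in which both $A$ and $B$ contain $n_k$, since then both sides of the would-be equality vary with $n_k$; those collisions should instead be charged to the stage of $\min(A\cup B)$, which your stage-wise applications of the inductive hypothesis already handle once the rejection mechanism above is in place.
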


\section{Uniform families of finite strong subtrees} \label{moredefn}
All the trees $U$ that we consider are rooted and have  height at most $\omega$.   For a given node $s\in U$ let
$|s|$ be its height in $U$, and similarly we write $|X|$ to denote the height of a subtree $X$ of $U$. Given
$n < |U|$, let $U(n)$ be the $n$th level of $U$, that is, the set of all nodes of $U$ of
height $n+1$.  Given $X\subseteq U$ let
$$L_X:=\{|s|-1\,:\, s\in X\} \subseteq L_U=\omega.$$
By $L_X<L_Y$ we mean that $\max L_X< \min L_Y$.
It is clear that in our context we can identify each node $s$ with the sequence of its predecessors. Given
$s,t\in U$ we write $s\wedge t$ to denote the \emph{meet} of $s$ and $t$, that is
$$s\wedge t:=\max_<\{u\,:\, u\le s,t\}.$$

To simplify the terminology we introduce the following concept.
\begin{definition}
Let $b>0$ be an integer. We call a tree $(U,<)$ a \emph{$b$-branching tree} when
\begin{enumerate}
\item[(a)] $U$  is rooted, and it has height at most $\omega$.
\item[(b)] All terminal nodes (if any) have the same height.
\item[(c)] For every non-terminal node $t\in U$ the set $U_{t}$ of immediate successors of $t$ has
cardinality $b$, and it is equipped with a total ordering $<_t$.
\end{enumerate}
\end{definition}
Notice that $b$-branching trees are naturally lexicographically well ordered by $s<_\mathrm{lex} t$ if and only if one of the following two possibilities holds.
\begin{enumerate}
\item{} The unique node $u_s$ in $U_{s\wedge t}$ below $s$ is $<_{s\wedge t}$ than the unique node $u_t$ in $U_{s\wedge
t}$ below $t$, where $<_{s\wedge t}$ is the prescribed linear ordering on $U_{s\wedge t}$. 
\item{}
 The two nodes satisfy $|s|<|t|$.
\end{enumerate}
The typical
$b$-branching tree is for $\tau\le \omega$ the set  $b^{<\tau}$ of mappings $f:n\to b$, $n<\tau$,
endowed with the ordering of extension of functions.
\begin{definition}
 Given two $b$-branching trees $U$ and $T$, an isomorphic embedding $\iota:U\rightarrow T$ is called
a \emph{strong embedding} when
\begin{enumerate}
\item[(1)] $\iota$ is $<_\mathrm{lex}$-preserving, i.e. if $s<_\mathrm{lex}t$ in $U$, then $\iota(s)<_{\mathrm{lex}}
\iota(t)$ in $T$;
\item[(2)] $\iota$ is meet-preserving, i.e. $\iota(s\wedge t)=\iota(s)\wedge \iota(t)$; and
\item[(3)] $\iota$ is level-preserving, i.e. if $|s|=|t|$ then $|\iota(s)|=|\iota(t)|$.
\end{enumerate}
$\iota$ is a strong isomorphism if it is a strong and onto embedding. In that case we call $U$, $T$ isomorphic and we denote $\iota_{U,T}:U\to T$ the strong isomorphism.
\end{definition}
The following is easy to prove.
\begin{proposition}
For every $b$-branching tree $U$ there is a unique $\tau\le \omega$ and a unique strong isomorphism  $\iota_{b^\tau,U}:b^{\tau}\rightarrow U$. Moreover such $\tau$ is the height of $U$.
\end{proposition}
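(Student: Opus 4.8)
The plan is to first pin down $\tau$ from the axioms of a strong isomorphism, then to construct $\iota_{b^{<\tau},U}$ by recursion on the levels, and finally to check uniqueness by a parallel level-by-level induction.

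\emph{Determining $\tau$.} A strong isomorphism is $<$-preserving and onto, so it must send the unique root of $b^{<\tau}$ to the unique root of $U$. Being level-preserving, it carries each level of $b^{<\tau}$ into a single level of $U$, and this assignment of levels is strictly increasing along branches; combined with surjectivity it cannot skip any level of $U$, so it induces the identity shift between levels. Hence $b^{<\tau}$ and $U$ have the same number of levels, i.e. the same height, so the only possible value is $\tau:=|U|$. This already proves uniqueness of $\tau$ and the last sentence of the statement, so from now on I fix $\tau:=|U|$.

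\emph{Existence.} I would define $\iota:=\iota_{b^{<\tau},U}$ by recursion on the length of sequences: put $\iota(\emptyset)$ equal to the root of $U$, and assume $\iota$ has been defined on all sequences of length $\le n$ so that it is a strong isomorphism onto the set of nodes of $U$ of height $\le n+1$. If $n+1<\tau$, then for each sequence $s$ of length $n$ the node $\iota(s)$ has height $n+1<|U|$, hence is non-terminal, so $U_{\iota(s)}$ has exactly $b$ elements; list them as $v^{s}_{0}<_{\iota(s)}\cdots<_{\iota(s)}v^{s}_{b-1}$ in the prescribed ordering and set $\iota(s^\smallfrown j):=v^{s}_{j}$ for $j<b$. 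Unioning these stages (and stopping at length $\tau-1$ when $\tau<\omega$) gives $\iota$ on all of $b^{<\tau}$. Level-preservation and injectivity are built into the recursion; surjectivity follows since every node of $U$ of height $n+2$ is an immediate successor of a unique node of height $n+1$, which by the inductive surjectivity is some $\iota(s)$, hence the node is some $v^{s}_{j}=\iota(s^\smallfrown j)$. For meet- and $<_{\mathrm{lex}}$-preservation one argues by induction on levels: the meet $s\wedge t$ is computed at the last common level, below which $\iota$ is already a strong isomorphism, and the recursion was arranged precisely so that $\iota$ respects the orderings $<_{\iota(s)}$ on successor sets; the two clauses defining $<_{\mathrm{lex}}$ (comparison inside $U_{s\wedge t}$, or comparison of heights) then transfer verbatim from $b^{<\tau}$ to $U$.

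\emph{Uniqueness of $\iota$.} Given two strong isomorphisms $\iota,\iota':b^{<\tau}\to U$, I would show $\iota(s)=\iota'(s)$ by induction on $|s|$: both send $\emptyset$ to the root, and if $\iota(s)=\iota'(s)$ with $|s|=n$ and $n+1<\tau$, then — because any strong embedding preserves the orderings on sets of immediate successors (if $i<_{s}j$ then $s^\smallfrown i<_{\mathrm{lex}}s^\smallfrown j$, while meet- and level-preservation force $\iota(s^\smallfrown i)$ and $\iota(s^\smallfrown j)$ to be elements of $U_{\iota(s)}$ with $\iota(s^\smallfrown i)<_{\iota(s)}\iota(s^\smallfrown j)$) — both $\iota$ and $\iota'$ must send $s^\smallfrown j$ to the $j$-th element of $U_{\iota(s)}$ in the order $<_{\iota(s)}$, so they agree on sequences of length $n+1$.

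The only place needing genuine care, and hence the main obstacle, is the bookkeeping in the existence step: verifying that the map produced by the naive level-by-level recursion really is meet-preserving and $<_{\mathrm{lex}}$-preserving, which amounts to unwinding the two-clause definition of the lexicographic order and matching it against the prescribed orderings $<_{t}$ on immediate-successor sets; everything else is routine.
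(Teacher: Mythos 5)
Your proof is correct; the paper in fact offers no argument here (it only remarks that the proposition ``is easy to prove''), and what you give is exactly the standard construction one would expect: heights must match because a surjective strong embedding is an order isomorphism, the map is built level by level by sending the immediate successors of $s$ to $U_{\iota(s)}$ in the prescribed order $<_{\iota(s)}$, and uniqueness follows by induction on levels since lex-, meet- and level-preservation force any strong isomorphism to respect these successor orderings. The only part you leave somewhat schematic---checking meet- and $<_{\mathrm{lex}}$-preservation for the recursively defined map---is indeed routine once one notes that the meet of two images is computed at the level of the last common predecessor, which by construction is the image of the meet; and you correctly read the statement's $b^{\tau}$ as the tree $b^{<\tau}$, which is what the paper intends.
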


\begin{definition}
Let  $U$ be  a $b$-branching tree and let $T\subseteq U$ be a $b$-branching subtree of $U$. We say that $T$
is a \emph{strong subtree} of $U$ when the inclusion mapping is a strong embedding.

Given $n \in \omega$, let $\mathcal S_{n}(U)$ be the family of all  strong subtrees of $U$ of height
$n$. By $\mathcal{S}_{\infty}(U)$ we denote the family of all strong subtrees of $U$ of infinite height.

Similarly for a $d$-sequence of $b$-branching trees $(U_i)_{i\in d}$ we call $(X_i)_{i\in d}$ a strong subtree of $(U_i)_{i\in d}$ if $X_i\in \mathcal{S}_{\tau}(U_i)$ and $L_{X_i}=L_{X_j}$ for all $i,j\in d$ and some $\tau\leq \omega$.
\end{definition}

Observe that nodes of $U$ are 1-strong subtrees of $U$

From now on, we fix an  infinite $b$-branching tree $U$. We are going to use letters $X,Y,Z,...$ and
$F,T,V,...$  to denote finite and infinite strong subtrees of $U$, respectively. Given strong subtrees $X,Y$
of $U$ by $X\sqsubseteq Y$ we mean that $X$ is an initial segment of $Y$, i.e. $X\subseteq Y$ and $Y(n)=X(n)$
for every $n<|X|$. Identical in the case of $Y=U$.
Similarly in the case of a $d$-sequence of $b$-branching trees $(U_i)_{i\in d}$ we call $(X_i)_{i\in d}$ and initial segment of $(Y_i)_{i\in d}$ if and only if $X_i\sqsubseteq Y_i$ for all $i\in d$. We denote the fact that $(X_i)_{i\in d}$ is an initial segment of $(Y_i)_{i\in d}$ by $(X_i)_{i\in d}\sqsubseteq (Y_i)_{i\in d}$.
We pass now to introduce operations for producing strong subtrees of $U$.

\begin{definition}
Given $t\in U$, let
$$U[t]=\{\, s\in U: t\leq s\,\}.$$
For $X\in \mathcal{S}_n(U)$ let
$$U[X]=\{ s\in U: \exists t\in X, t \leq s \,\}.$$
\end{definition}
So, $U[X]$ is the largest, under inclusion,  strong subtree of $U$ that has $X$ as initial segment.
Similarly for a given $t=(t_0, \dots, t_{n-1})\in \prod_{i\in d}U_i(n)$, let $$(U_i)_{i\in d}[t]=\{ U_i[t_i]\text{ for all } i\in d \}.$$

\begin{definition}
Let $Y$ be a finite strong subtree of $U$ of height $k$, and let $(T_i)_{i\in b^k}$ be a sequence of strong
subtrees of $U$ such that
\begin{enumerate}
\item[(a)] $L_{T_i}=L_{T_j}$ for every $i,j\in b^k$;
\item[(b)] The root of $T_i$ is different from the root of $T_j$ for every $i\neq j \in b^k$; and
\item[(c)] $\{T_i\}_{i\in [j\cdot b, (j+1) \cdot b^)}\subseteq U[t_j]$ for every $j<b^{k-1}$, where $\{t_j\}_{j\in
b^{k-1}}$ is the lexicographically ordered set of terminal nodes of $Y$.
\end{enumerate}

Set
$$Y^ \frown (T_i)_{i\in b^k}:=Y\cup \bigcup_{i\in b^k}T_i.$$
Given  a strong subtree $W$ of $U$ and given an initial part $Y$ of $W$ let $W(Y)$  be the unique sequence
$( Z_i)_{i\in d}$  of strong subtrees of $W$ such that  $Y^\frown (Z_i)_{i\in d}=W$.
\end{definition}
\begin{rem}
 Let $Y$ and $(T_i)_{i\in b^k}$ be as in Definition $7$. Let $\iota:b^{< k+\tau}\to U$ be the mapping defined by $\iota(s):=\iota_{b^k,Y}(s)$ for $s\in b^{<k}$ and $\iota(f):=\iota_{b^{\tau},T_{f(k)}}(\widehat{f})$ for $f\in b^{k+l}$, 
$l<\tau$, and where $\widehat{f}:l\to b$ is defined by $\widehat{f}(j):=f(k+j)$.  Then (a)-(c) above is equivalent to saying that $\iota$ is 
a strong embedding. 
\end{rem}

Whenever we write $Y^\frown (T_i)_{i\in b^k}$ we implicitly assume that (a)-(c) above hold.
 For a node $t\in W$
considered as a 1-strong subtree of $W$ we write $t^\frown (T_i)_{i\in b}$ and  $W[t]$ instead of $\{t\}^\frown
(T_i)_{i\in b}$ and  $W[\{t\}]$, respectively. For $t=(t_0, \dots, t_{n-1})\in \prod_{i\in d}U_i(n)$, $n\in \omega=L_{(U_i)_{i\in d}}$, and a $d\cdot b$-sequence of $b$-branching trees $(Y_j)_{j\in d\cdot b}$, we define $$t^\frown (Y_j)_{j\in d\cdot b}=\bigcup_{i\in d} t_i^\frown (Y_j)_{j\in [i\cdot b, (i+1)\cdot b)}.$$

Let $(Y_i)_{i\in d}\in \mathcal{S}_n((U_i)_{i\in d})$ and $(T_j)_{j\in d\cdot b^n}$. We define the $d$-sequence of trees $$((Y_i)_{i\in d})^\frown(T_j)_{j\in d\cdot b^n}=(Y_i^\frown (T_j)_{j\in [i\cdot b^n, (i+1)\cdot b^n)})_{i\in d}$$ an infinite strong subtree of $(U_i)_{i\in d}$.

Now for every node $t$ of $U$ we define a $b$-sequence of strong subtrees as follows:

$$U(t):=\{ (T_i)_{i\in b}: t^\frown (T_i)_{i\in b} =U[t]\}$$

Similarly for $t=(t_0, \dots, t_{d-1})\in \prod_{i\in d}U_i(n)$ we define a $d\cdot b$-sequence of strong subtrees as follows:

$$(U_i)_{i\in d}(t):=\{ (T_i)_{i\in d\cdot b}: t^\frown (T_i)_{i\in d\cdot b}  =(U_i)_{i\in d}[t]\}$$

\begin{definition}
Let $\mathcal{G}$ be a family of finite strong subtrees of $U$. Let $Y$ be a finite strong subtree of $U$ of
height $k$. We define
\begin{equation}
\label{iijegfef} \mathcal{G}(Y):=\{\, (Z_i)_{i\in b^{k}}: Y^\frown (Z_i)_{i\in
b^{k}}\in \mathcal{G}\, \}.
\end{equation}
For a node $t$ we write $\mathcal G(t)$ instead of $\mathcal G(\{t\})$. 
Given $t\in U$,
let
$$t^\frown \mathcal{G}(t):=\{ \, t^\frown (X_i)_{i\in d}: (X_i)_{i\in d}\in \mathcal{G}(t)\,\}\subset \mathcal{G}.$$ and given $i\in b$,
$$\pi_i (\mathcal{G}(t)):=\{X\in \mathcal S_{<\omega}(U)\, :\, \text{there is $(X_j)_{j\in b}\in \mathcal{G}(t)$ and $X_i=X$}\}.$$

Similarly for $t=(t_0, \dots, t_{n-1})\in \prod_{i\in d}U_i(n)$ and $\mathcal{G}$ a family of finite strong subtrees of $(U_i)_{i\in d}$, we define $$\mathcal{G}(t):=\{ \,  (X_j)_{j\in d\cdot b}: t^\frown (X_j)_{j\in d\cdot b}\in \mathcal{G}\,\}$$ and $$\mathcal{G}(t_i):=\{ \,  (X_j)_{j\in [i\cdot b,(i+1)\cdot b)}:  \exists (X'_j)_{j\in d\cdot b}\in \mathcal{G}(t), X'_j=X_j \text{ for all }j\in [i\cdot b,(i+1)\cdot b)\,\}.$$

\end{definition}

Finally, we are ready to define uniform families of finite strong subtrees of $U$ and of $(U_i)_{i\in d}$.  
\begin{definition}\label{uniform}
Let $\alpha$ be a countable ordinal number. We say that a family $\mathcal G$ of finite strong subtrees of $U$ is   \emph{$\alpha$-uniform} if the following hold.
 
\begin{enumerate}
 \item[(1)] If $\alpha=0$, then $\mathcal{G}=\{\emptyset\}$.
\item[(2)] If $\alpha=\beta+1$, then $\emptyset \notin \mathcal{G}$ and $\pi_i(\mathcal{G}(t))$ is $\beta$ uniform on $U[t^\frown i]$ for every $t\in U$ and $i\in b$.
\item[(3)] If $\alpha$ is a limit ordinal, then $\emptyset \notin \mathcal{G}$, and for all $t\in U$ and $i\in b$, there is some $\alpha_t< \alpha$ such that 
  $\pi_i(\mathcal{G}(t))$ is $\alpha_{t}$ uniform on $U[t^\frown i]$ and
\begin{enumerate}
 \item[(3.1)] $\{\, t\in U: \alpha_{t}=\beta\, \}$ is finite for every $\beta<\alpha$, and   
 \item[(3.2)] $\sup_{t\in C}\{\alpha_{t}\}=\alpha$ for every infinite chain $C$ of $U$.  
 
\end{enumerate}

 \end{enumerate}
 Similarly we define \emph{$\alpha$-uniform families of $d$-tuples $(X_i)_{i\in d}$ of finite strong subtrees of $(U_i)_{i\in d}$} as follows:
\begin{enumerate}
 \item[(1)] If $\alpha=0$, then $\mathcal{G}=\{\emptyset\}$;
\item[(2)]  If $\alpha=\beta+1$, then $\emptyset \notin \mathcal{G}$ and for every $t=(t_0, \dots, t_{d-1})\in \prod_{i\in d}U_i(n)$ one has
 that:\\ $(\pi_{j_i} \mathcal{G}(t_i))_{i\in d}$ on $ (U_i[t_i^\frown j_i])_{i\in d}$ is $\beta$-uniform, where for every 
$i\in d$, $j_i\in b$.
\item[(3)] If $\alpha$ is a limit ordinal, then $\emptyset \notin \mathcal{G}$ and for every $t=(t_0, \dots, t_{d-1})\in \prod_{i\in d}U_i(n)$, $n\in \omega$, one has
 that:\\ $(\pi_{j_i} \mathcal{G}(t_i))_{i\in d}$ on $ (U_i[t_i^\frown j_i])_{i\in d}$ is $\alpha_{t}$-uniform, where for every 
$i\in d$, $j_i\in b$ and 
\begin{enumerate}
\item[(3.1)] $\{\, t=(t_0,\dots, t_{d-1})\in \prod_{i\in d}U_i(n) :\alpha_t=\beta\,\}$ is finite for every $\beta<\alpha$,
\item[(3.2)] for any infinite chain $C$ of $\bigcup_{n\in \omega}\prod_{i\in d} U_i(n)$,  the tree that results by taking the level product of $(U_i)_{i\in d}$, we have that $(\alpha_t)_{t\in C}\to \alpha$.

\end{enumerate}

\end{enumerate}

   \end{definition}

The first thing that we remark is that by an easy inductive argument, if $\mathcal{G}$ is an $\alpha$-uniform family on $U$ and $T\in \mathcal{S}_{\infty}(U)$, then $\mathcal{G}\upharpoonright T=\{ X \in \mathcal{G}: X\in \mathcal{S}_n( T), n\in \omega \}$ is also $\alpha$-uniform on $T$.
   For $n\in \omega$ there is exactly one $n$-uniform family on $U$, the family of all strong subtrees of height $n$, namely $\mathcal{S}_n(U)$. It is easy to show that for each $\alpha\geq \omega$ there are infinitely many different $\alpha$-uniform families. A typical example of an $\omega$-uniform family on $U$ is the family $\mathcal{F}$ defined by $X\in \mathcal{F}$ if and only if the height of $X$ is equa tol the height of its root $r_X$. \\

           \subsection{ Canonical Ramsey Theorem of Milliken.}

    Recall the following pigeonhole principle for $\mathcal{S}_n(U)$.

    \begin{theorem}[Milliken,\cite{Mi1}]  Let $n,l$ be positive integers. For any finite coloring $c:\mathcal{S}_n(U)\to l$ of the $n$-uniform family of finite strong subtrees of $U$, there exists an infinite strong subtree $T$ of $U$ such that $c$ restricted on $\mathcal{S}_n(T)$ is constant.
  \end{theorem}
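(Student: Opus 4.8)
The plan is to prove the theorem by induction on $n$, with the Halpern--L\"auchli theorem in its strong subtree form as the base pigeonhole: for every $d\geq 1$, every $d$-tuple $(V_i)_{i\in d}$ of finitely branching trees of height $\omega$, and every finite coloring of the level product $\bigcup_{\ell\in\omega}\prod_{i\in d}V_i(\ell)$, there are an infinite set of levels $L$ and strong subtrees $W_i\leq V_i$, all with level set $L$, on which the coloring is constant. For $d=1$ this is exactly the case $n=1$ of the theorem, since $\mathcal{S}_1(U)$ is just the set of nodes of $U$; it is also the pigeonhole axiom of the Ramsey space $\mathcal{S}_{\infty}((U_i)_{i\in d})$ of Section~4, specialized to $d=1$. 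In order for the induction to close one should prove the $d$-parametrized statement: for every $d$ and every finite coloring of $\mathcal{S}_n((U_i)_{i\in d})$ there is a strong subtree $(T_i)_{i\in d}$ of $(U_i)_{i\in d}$ on which the coloring is constant; the theorem is the instance $d=1$.

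For the inductive step $n\to n+1$ (with $d$ fixed), the key observation is that, in the notation of Section~3, every $(n+1)$-strong subtree of $(U_i)_{i\in d}$ has the form $t^\frown(Z_j)_{j\in d\cdot b}$, where $t=(r_i)_{i\in d}$ is its root tuple (the $r_i$ lying at a common level) and $(Z_j)_{j\in d\cdot b}$ is an $n$-strong subtree of the $d\cdot b$-tuple of successor cones $\big(U_i[r_i^\frown j]\big)_{i\in d,\,j\in b}$. I would then run a fusion, processing the node tuples level by level: having reached a root tuple $t$ in the current workspace, the coloring $c$ induces a finite coloring of $\mathcal{S}_n\big((U_i[r_i^\frown j])_{i\in d,\,j\in b}\big)$, and the inductive hypothesis (for $d\cdot b$ trees at level $n$) shrinks those cones to a strong subtree on which it is constant, with value $c'(t)$; continuing the fusion into that subtree and diagonalizing over all node tuples yields a strong subtree $(T'_i)_{i\in d}$ of $(U_i)_{i\in d}$ on which every $(n+1)$-strong subtree with root tuple $t$ receives color $c'(t)$, for every root tuple $t$ of $(T'_i)_{i\in d}$. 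Now $c'$ is a finite coloring of the $1$-uniform family $\mathcal{S}_1((T'_i)_{i\in d})$ of root tuples of $(T'_i)_{i\in d}$, so by the case $n=1$ (Halpern--L\"auchli for $d$-tuples) there is a strong subtree $(T_i)_{i\in d}\leq(T'_i)_{i\in d}$ on which $c'$ is constant; every $(n+1)$-strong subtree of $(T_i)_{i\in d}$ lies in $(T'_i)_{i\in d}$ and has its root tuple in $(T_i)_{i\in d}$, so its $c$-color equals the common value of $c'$.

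The step I expect to be the main obstacle is the fusion in the second paragraph: processing infinitely many node tuples while shrinking the cones above each, in an order and with bookkeeping that guarantee the limit strong subtree inherits the homogeneity secured at every finite stage. This is standard in the area but genuinely delicate, and it is here that the $d$-parametrized formulation is forced, since homogenizing above a single root tuple couples all $d\cdot b$ successor cones simultaneously --- which is exactly why the paper builds its machinery for $d$-sequences. An alternative that avoids the induction on $n$ altogether is to use that $\mathcal{S}_{\infty}(U)$ is a topological Ramsey space (the single-tree instance of the construction of Section~4) whose family of rank-$n$ approximations is precisely $\mathcal{S}_n(U)$: a finite coloring of $\mathcal{S}_n(U)$ is then a coloring of a front into finitely many, hence relatively clopen, pieces, so the theorem becomes the corresponding instance of the Abstract Ramsey (Ellentuck-type) theorem, together with the elementary fact recorded after Definition~\ref{uniform} that $\mathcal{S}_n(U)\upharpoonright T=\mathcal{S}_n(T)$; on this route the entire difficulty is concentrated in verifying the Ramsey-space axioms, chiefly the pigeonhole axiom, which is once more the Halpern--L\"auchli theorem.
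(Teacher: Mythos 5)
Your outline is correct, but note first that the paper does not actually prove this statement: it is recalled as Theorem~4 with a citation to Milliken's paper [Mi1], and the later sections simply use it. Your primary route --- induction on $n$, strengthened to the $d$-parametrized statement for $\mathcal{S}_n((U_i)_{i\in d})$, with the strong-subtree Halpern--L\"auchli theorem as the case $n=1$ and a fusion over root tuples for the step $n\to n+1$ --- is essentially Milliken's original argument, and the decomposition $t^\frown (Z_j)_{j\in d\cdot b}$ you rely on is exactly the one the paper formalizes in Section~3. The one place where your sketch leaves genuine work is the fusion, as you anticipate: each application of the inductive hypothesis to the $d\cdot b$ cones above one root tuple produces its own level set, distinct tuples at the same level can share coordinates when $d>1$, and all nodes of the limit tree at a given height must sit on a single level of $U$; so after handling the finitely many tuples at a stage one must pass to a common infinite level set, using that monochromaticity and the strong-subtree property persist under further thinning --- standard, but it is precisely the bookkeeping that has to be executed. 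Your alternative route is, in substance, what the paper's own machinery already yields: Section~4 establishes that $(\mathcal{S}_{\infty}((U_i)_{i\in d}),\subseteq,r)$ is a topological Ramsey space (axiom $\mathbf{A.4}$ being Halpern--L\"auchli), and either Corollary~1 plus Lemma~5 applied to the $n$-uniform family $\mathcal{S}_n(U)$, or Theorem~6 applied to the clopen coloring $T\mapsto c(r_n(T))$ followed by extending each $X\in\mathcal{S}_n(T)$ to an infinite strong subtree of $T$ with $r_n$-value $X$, gives the statement for $d=1$. The Ramsey-space route buys a uniform treatment of colorings of arbitrary uniform families (which is what the paper needs downstream), at the cost of verifying the axioms; your inductive route is more elementary and self-contained, but delivers only this finite-rank pigeonhole and concentrates all the difficulty in the fusion.
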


\begin{definition} Let $X$ and $Y$ be strong subtrees of $U$ of height $n$. Let $N\subseteq b^{n}$ be a \emph{node set}. We say that 
$X$ and $Y$ \emph{agree on $N$} when $\iota_{b^n,X}(s)=\iota_{b^n,Y}(s)$ for every $s\in N$.

Let $L\subseteq n$ be a  \emph{set of levels}.
We say that $X$ and $Y$ \emph{agree on $L$} if for every
 $l\in L$ the $l$th level of $X$ and the $l$th level of $Y$ both lie on the same level of $U$.  

For $N\subseteq b^{<n}$ and $L\subseteq n$ We write 
$$X:(N,L)=Y:(N,L)  $$
to denote that $X$ and $Y$ agree on the node-level set $(N,L)$.  
 \end{definition}
 
Extending the Erd\H{o}s-Rado Theorem, Milliken obtained the following:
   \begin{theorem}[Milliken,\cite{Mi2}]\label{milliken-canon} For any coloring $c$ of the $n$-uniform family of finite strong subtrees of $U$, there exists an infinite strong subtree $T$ of $U$ and a node-level set $(N,L)$ so that for any $X,Y\in \mathcal{S}_n(T)$ one has $c(X)=c(Y)$ if and only if $$X:(N,L)=Y:(N,L)$$ For the above pair it holds that $L_N< L$, that is, the levels of $b^n$ on which the nodes of $N$ lie are strictly less than the levels appearing in $L$.
   \end{theorem}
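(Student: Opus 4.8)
The plan is to mimic the derivation of the Erd\H{o}s--Rado Theorem from the Pudl\'ak--R\"odl Theorem given earlier in the excerpt, but now using the canonical theorem for uniform families of strong subtrees (the Main Theorem, applied to the particular $n$-uniform family $\mathcal{S}_n(U)$) as the source of a minimal inner map, and Milliken's pigeonhole principle (Theorem \ref{Mi1}) to homogenize the resulting finite data. Concretely, first I would apply the Main Theorem to the given coloring $c$ of $\mathcal{S}_n(U)$: this yields an infinite strong subtree $T_0$ of $U$, a uniform family $\mathcal{R}$ and an inner map $f:\mathcal{S}_n(U)\restriction T_0\to\mathcal{R}$ such that $f(X)\subseteq X$ for all $X$ and $c(X)=c(Y)$ iff $f(X)=f(Y)$. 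Since $\mathcal{S}_n(U)\restriction T_0=\mathcal{S}_n(T_0)$ consists of strong subtrees all of height $n$, the target family $\mathcal{R}$ has finite rank, so after passing to a further strong subtree (using that restrictions of uniform families stay uniform) we may assume $\mathcal{R}=\mathcal{S}_k(T_0)$ for some $k\le n$, i.e. $f(X)$ is a strong subtree of $X$ of height $k$.

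Next I would record, for each $X\in\mathcal{S}_n(T_0)$, the ``template'' of how $f(X)$ sits inside $X$: since $X$ is strongly isomorphic to $b^{<n}$ via $\iota_{b^{<n},X}$, the subtree $f(X)$ corresponds to a strong subtree $S(X)$ of $b^{<n}$ of height $k$, which is determined by a set of levels $L(X)\subseteq n$ with $|L(X)|=k$ together with the data of which nodes of $b^{<n}$ at those levels are picked out — equivalently, a node set $N(X)\subseteq b^{<n}$. Crucially, because $f$ is meet-preserving-compatible only insofar as $f(X)$ is itself a strong subtree of $X$, the pair $(N(X),L(X))$ lives in a \emph{finite} set depending only on $n,b,k$. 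So define a finite coloring $d:\mathcal{S}_n(T_0)\to \mathcal{P}(b^{<n})\times\mathcal{P}(n)$ by $d(X):=(N(X),L(X))$. By Milliken's pigeonhole principle (Theorem \ref{Mi1}), there is an infinite strong subtree $T\sqsubseteq T_0$ on which $d$ is constant, with value $(N,L)$. This $T$ and $(N,L)$ will be the objects in the statement.

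It then remains to verify the two implications. If $X,Y\in\mathcal{S}_n(T)$ satisfy $X:(N,L)=Y:(N,L)$, then since $d(X)=d(Y)=(N,L)$ the subtrees $f(X)$ and $f(Y)$ occupy the same levels of $T$ (because the $l$th level of $X$ and of $Y$ lie on the same level of $T$ for $l\in L$, and $L(X)=L(Y)=L$ indexes exactly the levels used by $f$) and pick out corresponding nodes $\iota_{b^{<n},X}(s)=\iota_{b^{<n},Y}(s)$ for $s\in N$; hence $f(X)=f(Y)$ and so $c(X)=c(Y)$. Conversely, if $c(X)=c(Y)$ then $f(X)=f(Y)$; since this common value is a strong subtree of both $X$ and $Y$ occupying the levels indexed by $L$ and, via the isomorphisms $\iota_{b^{<n},X},\iota_{b^{<n},Y}$, the nodes indexed by $N$, we get exactly $X:(N,L)=Y:(N,L)$. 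Finally, to obtain $L_N<L$: a strong subtree of height $k$ of $b^{<n}$ is first determined by its branching levels $L$, and the set $N$ of its nodes can be recorded using only nodes strictly below the first branching level — i.e. the root segment — after a harmless normalization of the map $f$ (replace $f(X)$ by the canonically-least strong subtree of $X$ realizing the same color-distinguishing information, which only shrinks $\mathcal{R}$ and is again inner); then rerun Milliken's pigeonhole on the refined finite coloring. The inequality $L_N<L$ follows because the nodes in $N$ all lie below the lowest level in $L$.

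The main obstacle I anticipate is the bookkeeping in the normalization step that produces the inequality $L_N<L$: one must argue that the inner map delivered by the Main Theorem can be replaced by one whose image subtree, read inside $b^{<n}$, has its ``node-choice'' information concentrated below its first branching level without changing the induced equivalence relation, and that this replacement is uniform enough to survive a single further application of Milliken's pigeonhole. Everything else is a routine transcription of the Erd\H{o}s--Rado-from-Pudl\'ak--R\"odl argument into the tree setting, using that $n$-uniform families on $U$ are exactly $\mathcal{S}_n(U)$ and that finite-rank uniform families are, up to a strong subtree, of the form $\mathcal{S}_k(U)$.
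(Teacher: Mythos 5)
The paper does not prove this statement at all: it is quoted from Milliken's unpublished note \cite{Mi2} and is then itself used as an ingredient of the later development (for example in the normalization ensuring that the ranks of the families $\mathcal{G}(t)$ follow the lexicographic ordering of $U$, and in the proof of Proposition 3, which underwrites Remark 2). Your proposal derives the statement from the Main Theorem (Theorem 7), so in the logical architecture of this paper it is circular, and even read as a freestanding argument it assumes exactly the canonical partition result whose proof is the hard content: the Erd\H{o}s--Rado-from-Pudl\'ak--R\"odl style reduction you carry out (record the relative position of the invariant inside $X$ as a finite amount of data, homogenize that finite coloring with Milliken's pigeonhole, Theorem 4) is the easy outer layer, and it is indeed how one passes from a canonical-coloring statement to a single fixed pair $(N,L)$, but it does not prove Milliken's theorem; the analogue of the Pudl\'ak--R\"odl induction (the fusion arguments and Lemmas 6--12 of the paper, or Milliken's original argument) is what would have to replace the citation.

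There is also a substantive error in the normal form you extract from the Main Theorem. Its conclusion (Definition 16) is not an inner map with $f(X)\subseteq X$ a strong subtree and $c(X)=c(Y)$ iff $f(X)=f(Y)$; it assigns to $X$ a node-level set $(N^X,L^X)$ with $N^X\subseteq X$, $L^X\subseteq L_X$, and equality is required only up to translation of the node set. Your step ``we may assume $\mathcal{R}=\mathcal{S}_k(T_0)$, i.e.\ $f(X)$ is a strong subtree of $X$ of height $k$'' is not attainable in general: already for the coloring $c(t)=|t|$ of $\mathcal{S}_1(U)$ no inner map $f(t)\subseteq\{t\}$ can represent the equivalence (it forces $c$ to be either constant or one-to-one), which is precisely why the tree setting needs level data that is not a set of nodes of $X$. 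A repaired version of your reduction must work with the pairs $(N^X,L^X)$ directly (coloring $X$ by $\iota^{-1}_{b^{<n},X}(N^X)$ together with the relative positions of $L^X$ inside $L_X$), must reconcile the ``up to translation'' agreement of Definition 15, which is what Theorem 7 delivers, with the exact agreement $\iota_{b^n,X}(s)=\iota_{b^n,Y}(s)$ demanded in Definition 10, and must justify $L_N<L$ by the normalization discussed in Section 5 (absorbing levels adjacent to the node set into the node set), not by the ``root segment'' remark, which as written does not establish anything.
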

Observe that in the case of the uniform family of rank one, namely $\mathcal{S}_1(U)$, the above theorem gives us an infinite strong subtree $T$ of $U$ such that the coloring $c$ is constant $(N=L=\emptyset)$, one-to-one $(N=b^1$, $L=\emptyset)$, or constant on the levels $(N=\emptyset$, $L=\{0\})$, i.e. $c(t)=c(s)$ if and only if $|t|=|s|$.

 We assume from now on that for any uniform family of infinite rank  $\mathcal{G}$, that we consider, the rank of each uniform family $\mathcal{G}(t)$ on $U(t)$, for every node $t$, follows the lexicographic ordering $(U, <_\mathrm{lex})$ introduced above, i.e. for $s<_\mathrm{lex} t$ we have that the rank of $\mathcal{G}(s)$ on $U(s)$ is less than or equal the  rank of $\mathcal{G}(t)$ on $U(t)$. This is obvious if $\alpha$ is a successor ordinal. If $\alpha$ is a limit ordinal, then by Definition $9(3.1)$ we have that the set $\{\, t\in U: \alpha_{t}=\beta\, \}$ is finite for every $\beta<\alpha$. Consider the coloring $c: \mathcal{S}_1(U)\to \alpha$ defined by $c(t)=\beta$ if $\mathcal{G}(t)$ is of rank $\beta<\alpha$. By Theorem $5$ there exists $T\in \mathcal{S}_{\infty}(U)$ such that $c\upharpoonright \mathcal{S}_1(T)$ is either one-to one, or constant on the levels. In both cases the rank of each uniform family $\mathcal{G}(t)$ on $T(t)$, for every node $t$, follows the lexicographic ordering $(T, <_\mathrm{lex})$, modulo passing to an infinite strong subtree.

   Notice that Theorem $5$ is an analog, 
in some sense, of the Pudl\'ak-R\"odl theorem and extends the finite version of Milliken's theorem. Our main theorem of this paper is going to extend 
Theorem \ref{milliken-canon} to an arbitrary uniform family,
 completing the analog between Erd\H{o}s-Rado and Pudl\'ak-R\"odl. Before stating the main theorem we still need some new concepts and results.

   \section{$\mathcal{S}_{\infty}((U_i)_{i\in \omega})$ as topological Ramsey space}
  
   We introduce the notion of Nash-Williams on families of finite strong subtrees. We remind the reader the notion of initial segment. Given strong subtrees $X,Y$
of $U$ by $X\sqsubseteq Y$ we mean that $X$ is an initial segment of $Y$, i.e. $X\subseteq Y$ and $Y(n)=X(n)$
for every $n<|X|$. Identical in the case of $Y=U$.
Similarly in the case of a $d$-sequence of $b$-branching trees $(U_i)_{i\in d}$ we call $(X_i)_{i\in d}$ and initial segment of $(Y_i)_{i\in d}$ if and only if $X_i\sqsubseteq Y_i$ for all $i\in d$. We denote the fact that $(X_i)_{i\in d}$ is an initial segment of $(Y_i)_{i\in d}$ by $(X_i)_{i\in d}\sqsubseteq (Y_i)_{i\in d}$.

   \begin{definition} A family $\mathcal{F}$ of finite strong subtrees of $U$ is {\it Nash--Williams} if given any two $X,Y\in \mathcal{F}$, $X$ is not an initial segment of $Y$.
   \end{definition}
   The first thing we notice is the following lemma:
       
      \begin{lemma} If $\mathcal{G}$ is uniform of $(U_i)_{i\in d} $, then $\mathcal{G}$ is Nash-Williams. \end{lemma}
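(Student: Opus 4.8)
The plan is to argue by transfinite induction on the rank $\alpha$ of the uniform family $\mathcal{G}$, mirroring the fact (recalled in Section $2$) that uniform families of subsets of $\omega$ are Nash--Williams. I will treat the case of a single $b$-branching tree $U$; the case of a $d$-sequence $(U_i)_{i\in d}$ is identical after carrying the index $i\in d$ through, since the recursive clauses in Definition \ref{uniform} for $d$-tuples are the component-wise versions of those for a single tree. Suppose toward a contradiction that $X\sqsubseteq Y$ with $X,Y\in\mathcal{G}$ and $X\neq Y$. Since $\emptyset\notin\mathcal{G}$ for $\alpha>0$ and $\mathcal{G}=\{\emptyset\}$ for $\alpha=0$, we must have $\alpha>0$, so both $X$ and $Y$ are nonempty; in particular they have a common root, call it $t$ (the root of $Y$ equals the root of $X$ because $X\sqsubseteq Y$ forces $Y(0)=X(0)$, both being the single node at level $0$).

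Next I would peel off the root. Write $X=t^\frown(X_j)_{j\in b}$ and $Y=t^\frown(Y_j)_{j\in b}$, so that $(X_j)_{j\in b}\in\mathcal{G}(t)$ and $(Y_j)_{j\in b}\in\mathcal{G}(t)$. From $X\sqsubseteq Y$ and the definition of the $^\frown$ operation one checks that $X_j\sqsubseteq Y_j$ for every $j\in b$, hence $\pi_j((X_j)_{j\in b}) = X_j$ is an initial segment of $\pi_j((Y_j)_{j\in b}) = Y_j$, and both $X_j,Y_j$ lie in $\pi_j(\mathcal{G}(t))$, which by clause (2) or (3) of Definition \ref{uniform} is a $\beta$-uniform (resp. $\alpha_t$-uniform) family on $U[t^\frown j]$ with $\beta<\alpha$ (resp. $\alpha_t<\alpha$). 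By the induction hypothesis applied to this strictly smaller-rank uniform family, $X_j$ is \emph{not} a proper initial segment of $Y_j$, so $X_j=Y_j$, for every $j\in b$. Therefore $X=t^\frown(X_j)_{j\in b}=t^\frown(Y_j)_{j\in b}=Y$, contradicting $X\neq Y$. The base case $\alpha=0$ is trivial since $\{\emptyset\}$ has no two distinct members, and the successor/limit cases are handled uniformly by the argument above once one observes that in both cases $\pi_j(\mathcal{G}(t))$ has rank strictly below $\alpha$.

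The one point that needs a little care — and which I expect to be the main (though still routine) obstacle — is the bookkeeping identity that $X\sqsubseteq Y$ really does pass to the components: that is, that if $t^\frown(X_j)_{j\in b}\sqsubseteq t^\frown(Y_j)_{j\in b}$ then $X_j\sqsubseteq Y_j$ for each $j$. This follows from the observation that $Y[t^\frown j] = t^\frown j{}^\frown$-piece of $Y$ restricted above the $j$-th immediate successor of $t$, together with the fact that being an initial segment is a levelwise condition ($Y(n)=X(n)$ for $n<|X|$) which localizes to each subtree hanging below a terminal node of the first level. I would spell this out from the definition of $Y^\frown(Z_i)_{i\in b^k}$ and of $\sqsubseteq$ in one or two lines. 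Once that localization lemma is in hand, the induction closes immediately, and the $d$-sequence version follows by applying the single-tree statement coordinatewise, using that $(X_i)_{i\in d}\sqsubseteq(Y_i)_{i\in d}$ means $X_i\sqsubseteq Y_i$ for all $i\in d$ by definition.
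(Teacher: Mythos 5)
Your proof is essentially the paper's: the paper also argues by induction on the rank, assumes one member is an initial segment of another, passes to the common root $t$, decomposes both trees above the immediate successors of $t$, and applies the inductive hypothesis to the strictly smaller-rank families coming from Definition \ref{uniform}; your version merely spells out the (routine) bookkeeping that $\sqsubseteq$ localizes to the components, which the paper leaves implicit, and handles the "proper versus non-proper initial segment" point a bit more carefully. The one place to adjust is your closing sentence: the $d$-sequence case should not be deduced by "applying the single-tree statement coordinatewise", because the $d$-tuple clauses of Definition \ref{uniform} assert uniformity of the \emph{tuple} families $(\pi_{j_i}\mathcal{G}(t_i))_{i\in d}$ on $(U_i[t_i^\frown j_i])_{i\in d}$, not of the individual coordinate projections, and a coordinate projection of a uniform family of $d$-tuples need not itself be uniform or Nash--Williams (the height of one coordinate may be governed by data in another coordinate). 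The correct route is the one you indicate earlier in the paragraph: run the same induction directly on $d$-tuples, peeling off the common root $t=(t_i)_{i\in d}$ and applying the inductive hypothesis, for each choice of $(j_i)_{i\in d}$, to the $\beta$-uniform tuple family above $(t_i^\frown j_i)_{i\in d}$ --- which is exactly the paper's argument.
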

      
      \begin{proof}
      By induction on $\alpha$ such that $\mathcal{G}$ is $\alpha$-uniform. 
      If $\alpha=0$, then the assertion is trivial.
       Let $\alpha> 0$ and assume the assertion holds for every $\beta<\alpha$. Assume that there are $(X_i)_{i\in d}$, $(Y_i)_{i\in d} \in \mathcal{G}$ and $(X_i)_{i\in d}\sqsubseteq (Y_i)_{i\in d}$. Let  $t=(t_i)_{i\in d}$ be the common root of $(X_i)_{i\in d}$ and $(Y_i)_{i\in d}$. By definition of uniform family for all $i\in b$ and $j_i\in b$, $(\pi_{j_i} \mathcal{G}(t_i^\frown j_i))_{i\in d}$ is a $\beta$-uniform family on  $ (U_i[t_i^\frown j_i])_{i\in d}$, for $\beta<\alpha$. From our assumption it follows that $(X_i[t_i^\frown j_i])_{i\in d}$ is an initial segment of $(Y_i[t_i^\frown j_i])_{i\in d}$ contradicting the inductive hypothesis. Therefore $\mathcal{G}$ has the property that for any two $(X_i)_{i\in d},(Y_i)_{i\in d} \in \mathcal{G}$ is not the case that $(X_i)_{i\in d}$ is an initial segment of $(Y_i)_{i\in d}$.
       \end{proof}

  The following lemma has an easy proof by induction on $\alpha$\\

    \begin{lemma} If $\mathcal{G}$ is $\alpha$-uniform on $(U_i)_{i\in d}$ then $\mathcal{G}\upharpoonright (T_i)_{i\in d}$ is also $\alpha$-uniform on $(T_i)_{i\in d}$, for any $(T_i)_{i\in d}\in\mathcal{S}_{\infty}((U_i)_{i\in d})$ \end{lemma}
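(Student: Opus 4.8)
The plan is to proceed by induction on the ordinal $\alpha$ for which $\mathcal{G}$ is $\alpha$-uniform, proving the statement simultaneously for all $\alpha$-uniform $\mathcal{G}$ on $(U_i)_{i\in d}$ and all $(T_i)_{i\in d}\in\mathcal{S}_{\infty}((U_i)_{i\in d})$. For $\alpha=0$ nothing is needed, since $\mathcal{G}=\{\emptyset\}$ gives $\mathcal{G}\upharpoonright(T_i)_{i\in d}=\{\emptyset\}$, which is $0$-uniform on $(T_i)_{i\in d}$.

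The heart of both the successor and the limit step is a commutation identity between restriction and the operations $\mathcal{G}\mapsto\mathcal{G}(t)$ and $\pi_j$ used in the definition of $\alpha$-uniformity. Fix $(T_i)_{i\in d}\in\mathcal{S}_{\infty}((U_i)_{i\in d})$ and a tuple $t=(t_0,\dots,t_{d-1})$ with $t_i\in T_i(n)$; since $L_{T_i}=L_{T_j}$ for all $i,j$, the $t_i$ all lie on one level of the $U_i$, so $t$ is also a legitimate tuple in $\prod_{i\in d}U_i(m)$ for the corresponding $m$. Because each inclusion $T_i\subseteq U_i$ is $<_{\mathrm{lex}}$- and level-preserving, the node $t_i^\frown j_i$ formed inside $T_i$ lies in $U_i$ above the $j_i$-th immediate successor of $t_i$, and $T_i[t_i^\frown j_i]$ (computed in $T_i$) is an infinite strong subtree of $U_i[t_i^\frown j_i]$ (computed in $U_i$). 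I would then unwind the definitions of $(\mathcal{G}\upharpoonright(T_i)_{i\in d})(t)$, of the block-projections $\pi_{j_i}$, and of the restriction of a family of $d$-tuples, to obtain, for every $(j_i)_{i\in d}\in b^d$,
$$\bigl(\pi_{j_i}\,(\mathcal{G}\upharpoonright(T_i)_{i\in d})(t_i)\bigr)_{i\in d}\ =\ \bigl(\pi_{j_i}\,\mathcal{G}(t_i)\bigr)_{i\in d}\upharpoonright\bigl(T_i[t_i^\frown j_i]\bigr)_{i\in d};$$
that is, localizing the restricted family at $t$ inside $(T_i)_{i\in d}$ and then projecting yields the same family as projecting the localization of $\mathcal{G}$ at $t$ in $(U_i)_{i\in d}$ and then restricting to the infinite strong subtree $(T_i[t_i^\frown j_i])_{i\in d}$ of $(U_i[t_i^\frown j_i])_{i\in d}$. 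Verifying this identity — in particular checking the nontrivial inclusion while correctly tracking how a node and its immediate successors get relabelled when passing from $U_i$ to the strong subtree $T_i$ — is the one genuinely delicate point, and I expect it to be the main obstacle; everything else is mechanical.

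Granting the identity, the successor step $\alpha=\beta+1$ is immediate: for each such $t$ and each $(j_i)_{i\in d}$, the family $(\pi_{j_i}\mathcal{G}(t_i))_{i\in d}$ is $\beta$-uniform on $(U_i[t_i^\frown j_i])_{i\in d}$ by uniformity of $\mathcal{G}$, so by the induction hypothesis its restriction to the infinite strong subtree $(T_i[t_i^\frown j_i])_{i\in d}$ is $\beta$-uniform, and by the identity this restriction is precisely $(\pi_{j_i}(\mathcal{G}\upharpoonright(T_i)_{i\in d})(t_i))_{i\in d}$; together with $\emptyset\notin\mathcal{G}\upharpoonright(T_i)_{i\in d}$ this is exactly clause (2) for $\mathcal{G}\upharpoonright(T_i)_{i\in d}$ on $(T_i)_{i\in d}$.

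For the limit step I would attach to each tuple $t$ of nodes of the level-product of $(T_i)_{i\in d}$ the ordinal $\alpha_t<\alpha$ that uniformity of $\mathcal{G}$ supplies at the corresponding tuple of the level-product of $(U_i)_{i\in d}$; by the induction hypothesis and the identity, $(\pi_{j_i}(\mathcal{G}\upharpoonright(T_i)_{i\in d})(t_i))_{i\in d}$ is $\alpha_t$-uniform on $(T_i[t_i^\frown j_i])_{i\in d}$, so this $\alpha_t$ is the required assignment. Clauses (3.1) and (3.2) then pass from $\mathcal{G}$ to $\mathcal{G}\upharpoonright(T_i)_{i\in d}$ because every node of the level-product of $(T_i)_{i\in d}$ is a node of the level-product of $(U_i)_{i\in d}$ — hence $\{t:\alpha_t=\beta\}$ for the restricted family is contained in the corresponding finite set for $\mathcal{G}$ — and every infinite chain of the level-product of $(T_i)_{i\in d}$ is an infinite chain of that of $(U_i)_{i\in d}$, along which $(\alpha_t)_t\to\alpha$. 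Since also $\emptyset\notin\mathcal{G}\upharpoonright(T_i)_{i\in d}$, this shows $\mathcal{G}\upharpoonright(T_i)_{i\in d}$ is $\alpha$-uniform and closes the induction.
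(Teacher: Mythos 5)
The paper offers no argument for this lemma beyond the remark that it ``has an easy proof by induction on $\alpha$'', so your induction skeleton -- successor step via clause (2), limit step transporting the ordinals $\alpha_t$ and conditions (3.1)--(3.2) to the level product of $(T_i)_{i\in d}$ -- is exactly the intended approach, and those parts are unobjectionable. The genuine gap is the step you yourself flag and then grant: the identity $(\pi_{j_i}(\mathcal{G}\upharpoonright(T_i)_{i\in d})(t_i))_{i\in d}=(\pi_{j_i}\mathcal{G}(t_i))_{i\in d}\upharpoonright(T_i[t_i^\frown j_i])_{i\in d}$. Only the left-to-right inclusion is an unwinding of definitions. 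The right-to-left inclusion is an amalgamation problem: each coordinate $X_i$ of a tuple on the right is certified by its own witnessing $d\cdot b$-tuple in $\mathcal{G}(t)$, whose remaining coordinates may stick out of $(T_i)_{i\in d}$ and may differ from witness to witness, and you must produce a single tuple of $\mathcal{G}(t)$, lying entirely inside $(T_i)_{i\in d}$ with a common level set, whose distinguished coordinates are the given $X_i$. Nothing in Definition 9 supplies this, because the definition constrains only the projections $\pi_{j_i}\mathcal{G}(t_i)$ and not the tuple families $\mathcal{G}(t)$ themselves; and without the reverse inclusion the successor step collapses, since a subfamily of a $\beta$-uniform family need not be $\beta$-uniform.

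To see that this is not a mechanical verification, take $d=1$, $b=2$, $U=2^{<\omega}$ and $\mathcal{G}=\{\,\{t\}\cup\{t^\frown 0^\frown w,\ t^\frown 1^\frown w\}:\ t\in U,\ w\in 2^{<\omega}\,\}$. For every $t$ and $i$ one computes $\pi_i(\mathcal{G}(t))=\mathcal{S}_1(U[t^\frown i])$, so $\mathcal{G}$ satisfies the letter of Definition 9 with $\alpha=2$; yet if $T$ is the strong subtree obtained by always passing from $s\in T$ to the successors $s^\frown 0^\frown 0$ and $s^\frown 1^\frown 1$, then $(\mathcal{G}\upharpoonright T)(t)=\emptyset$ for every $t\in T$ while the right-hand side of your identity is all of $\mathcal{S}_1$ of the corresponding part of $T$; indeed $\mathcal{G}\upharpoonright T=\emptyset$, so under this literal reading even the conclusion of the lemma fails. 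Hence the identity can only hold under the stronger, intended meaning of uniformity (in which membership of $t^\frown(X_j)_{j\in d\cdot b}$ in $\mathcal{G}$ is governed coordinatewise by the projection families together with matching of level sets), and even then one must argue that the missing coordinates can be realized inside $(T_i)_{i\in d}$ with the prescribed level set $L_X$. Note that the obstruction is created by the projections: at the level of tuple families restriction does commute exactly, since $(\mathcal{G}\upharpoonright(T_i)_{i\in d})(t)$ consists precisely of those $(X_j)_{j\in d\cdot b}\in\mathcal{G}(t)$ for which $t^\frown(X_j)_{j\in d\cdot b}$ is a strong subtree of $(T_i)_{i\in d}$, which is presumably what makes the intended induction easy. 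As written, with the identity merely granted, your proof has a genuine gap at exactly the point you flagged.
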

     
 Now we introduce the notion of Ramsey on families of finite strong subtrees.

      \begin{definition} A family of finite strong subtrees $\mathcal{G}$ on $(U_i)_{i\in d}$ is Ramsey if for every finite partition $\mathcal{G}= \mathcal{G}_0\cup \dots \cup \mathcal{G}_{l-1}$ there exists $(T_i)_{i\in d}\in \mathcal{S}_{\infty}((U_i)_{i\in d})$ such that at most one of the sets $\mathcal{G}_i\upharpoonright (T_i)_{i\in d}$ is non empty.\end{definition}
      
      \begin{lemma}
      Any $\alpha$-uniform family $\mathcal{G}$ on $(U_i)_{i\in d}$ is Ramsey.
      \end{lemma}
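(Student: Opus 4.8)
The plan is to prove the statement by induction on the countable ordinal $\alpha$ for which $\mathcal G$ is $\alpha$-uniform, after reducing to the two-color case. For that reduction I would induct on the number $l$ of pieces: given $\mathcal G=\mathcal G_0\cup\dots\cup\mathcal G_{l-1}$, apply the two-color case to the partition $(\mathcal G_0\cup\dots\cup\mathcal G_{l-2})\cup\mathcal G_{l-1}$; either one obtains $(T_i)_{i\in d}$ with $(\mathcal G_0\cup\dots\cup\mathcal G_{l-2})\upharpoonright(T_i)_{i\in d}=\emptyset$ and we are done, or one obtains $(T_i)_{i\in d}$ with $\mathcal G_{l-1}\upharpoonright(T_i)_{i\in d}=\emptyset$, in which case $(\mathcal G_0\cup\dots\cup\mathcal G_{l-2})\upharpoonright(T_i)_{i\in d}$ is again $\alpha$-uniform on $(T_i)_{i\in d}$ by the preceding Lemma, and we finish by the inductive hypothesis on $l$ carried out over $(T_i)_{i\in d}$. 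So fix a coloring $c\colon\mathcal G\to 2$. If $\alpha=0$ then $\mathcal G=\{\emptyset\}$ and $(U_i)_{i\in d}$ itself works; hence assume $\alpha>0$ and, normalizing as in Section~\ref{moredefn} (via Theorem~\ref{milliken-canon} applied to $t\mapsto\operatorname{rank}(\mathcal G(t))$), assume that this rank is non-decreasing along the lexicographic order of the level-product tree.

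For the inductive step I would peel off the bottom level of $\mathcal G$. If $(Z_i)_{i\in d}\in\mathcal G$ has root $t=(t_0,\dots,t_{d-1})\in\prod_{i\in d}U_i(n)$, then $(Z_i)_{i\in d}=t^{\frown}(X_j)_{j\in d\cdot b}$, and by Definition~\ref{uniform} each $d$-tuple family $(\pi_{j_i}\mathcal G(t_i))_{i\in d}$ on $(U_i[t_i^{\frown}j_i])_{i\in d}$ is uniform of rank strictly below $\alpha$ (equal to $\beta$ if $\alpha=\beta+1$, to some $\alpha_t<\alpha$ if $\alpha$ is limit), so the inductive hypothesis applies to it and to every coloring of it. Combining this with Milliken's pigeonhole principle \cite{Mi1} in its form for a $d$-sequence of $b$-branching trees, I would run a fusion producing an infinite strong subtree sequence $(T_i)_{i\in d}$ such that $c\big((Z_i)_{i\in d}\big)$ depends only on the root-tuple of $(Z_i)_{i\in d}$, for all $(Z_i)_{i\in d}\in\mathcal G\upharpoonright(T_i)_{i\in d}$. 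A final application of Milliken's pigeonhole to the induced two-coloring of $\mathcal S_1((T_i)_{i\in d})$ then produces a further strong subtree sequence on which this color is constant, and over it $\mathcal G$ is therefore contained in a single piece of the partition, as required.

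The fusion would be organized level by level. At stage $n$ only finitely many node-tuples $t$ occur at level $n$ of the current finite approximation, and for each one I would shrink the current reservoir so that $c$ becomes constant on the members of $\mathcal G$ with root-tuple $t$ that live inside the reservoir, iterating over these finitely many $t$; this never disturbs decisions made at earlier stages, because "being constant on a collection of $\mathcal G$-members" persists under passage to a further infinite strong subtree sequence, the collection only shrinking. One uses here that $\mathcal G$ is Nash--Williams, by the earlier Lemma, so that distinct members of $\mathcal G$ are $\sqsubseteq$-incomparable and the root-tuple genuinely records where a member begins. In the limit case of the induction on $\alpha$, clauses (3.1) and (3.2) of Definition~\ref{uniform} guarantee that only finitely many, strictly smaller-rank, families are relevant at each level, so each stage remains a finite iteration of the shrinking step, and since every member of $\mathcal G$ has finite height the fusion exhausts all finite levels.

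The main obstacle I foresee is exactly the shrinking step that fixes, at a given node-tuple $t$, the color of all $\mathcal G$-members with root $t$. The definition of $\alpha$-uniformity for $d$-sequences hands us uniform families only as the $d$-tuple projections $(\pi_{j_i}\mathcal G(t_i))_{i\in d}$, one for each branching transversal $(j_i)_{i\in d}\in b^{d}$, whereas a member of $\mathcal G$ above $t$ is a $d\cdot b$-tuple that $c$ sees in full. Reconciling these means running a product-pigeonhole argument: repeated use of the inductive hypothesis, once per transversal, interleaved with Milliken's pigeonhole for $d$-sequences of trees to control the coupling between coordinates. Getting this interaction right, and bookkeeping the $d$ trees so they grow in lock-step (respecting $L_{X_i}=L_{X_j}$) throughout the fusion, is where the real work lies; the surrounding argument is the standard Galvin-type machinery behind the Nash--Williams theorem.
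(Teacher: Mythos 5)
Your route is genuinely different from the paper's, and it is worth saying what the paper actually does: having shown that $(\mathcal{S}_{\infty}((U_i)_{i\in d}),\subseteq,r)$ is a topological Ramsey space (axiom \textbf{A.4} being the strong subtree Halpern--L\"auchli theorem), the paper deduces Corollary 1 --- for any family $\mathcal{F}$ of finite strong subtrees there is $(T_i)_{i\in d}$ such that either $\mathcal{F}$ misses $\mathcal{S}_{<\infty}((T_i)_{i\in d})$, or every infinite strong subtree of $(T_i)_{i\in d}$ has an initial segment in $\mathcal{F}$ --- and then proves Lemma 5 in two lines by applying this corollary successively to the colors $\mathcal{G}_0,\dots,\mathcal{G}_{l-1}$: since $\mathcal{G}$ is Nash--Williams (Lemma 3), the second alternative can hold for at most one color, as two $\sqsubseteq$-comparable initial segments of one infinite strong subtree lying in different colors would contradict the Nash--Williams property. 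No induction on rank and no fusion are needed. Your proposal is instead the classical Nash--Williams/Galvin-style argument: reduction to two colors, induction on $\alpha$ (with $d$ arbitrary), a fusion making the color of a member depend only on its root tuple, and a final Halpern--L\"auchli application to the induced coloring of the level product $\mathcal{S}_1((T_i)_{i\in d})$. That skeleton, including the persistence of decisions under further shrinking and the lock-step level bookkeeping, is sound.

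The problem is that, as written, the proof stops exactly at the step you yourself flag as the obstacle, and that step is the crux. The shrinking step at a root tuple $t\in\prod_{i\in d}U_i(n)$ needs the inductive hypothesis to be applied to the family $\mathcal{G}(t)$ of full $d\cdot b$-tuples, with the coloring $c_t((X_j)_{j\in d\cdot b})=c(t^\frown(X_j)_{j\in d\cdot b})$, because $c$ sees the whole $d\cdot b$-tuple. Your proposed substitute --- one application of the hypothesis per transversal $(j_i)_{i\in d}\in b^{d}$ to the projections $(\pi_{j_i}\mathcal{G}(t_i))_{i\in d}$, interleaved with Milliken's pigeonhole --- is not carried out, and it is not clear it could be made to work in that form: the transversal projections forget the coupling between the $d\cdot b$ coordinates, and constancy of $c$ on each projected $d$-tuple family does not determine, let alone fix, the color of the full $d\cdot b$-tuples. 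The way out is the reading of Definition 9 that the paper itself uses verbatim in the proofs of Lemmas 6 and 7: $\mathcal{G}(t)$ is itself a $\beta$-uniform family, $\beta<\alpha$, on the $d\cdot b$-sequence $(U_i)_{i\in d}(t)$. If you state your induction as ranging over all finite lengths of tree sequences (which you implicitly do, keeping $d$ arbitrary), the inductive hypothesis then applies directly with $d$ replaced by $d\cdot b$, your ``main obstacle'' disappears, and the fusion closes. With that point made explicit your argument becomes a correct, self-contained alternative proof; without it, the proof is incomplete at its decisive step, and in any case it redoes by hand what the paper obtains abstractly from the Ramsey-space machinery together with Lemma 3.
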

    Before proving this Lemma we show that the family $\mathcal{S}_{\infty}((U_i)_{i\in d})$ forms a topological Ramsey space in the sense of \cite{To}. The reader is assumed to be familiar with the Theory of topological Ramsey spaces as presented in \cite{To}.  In \cite{To} Chapter $6$, it is shown that $\mathcal{S}_{\infty}(U)$ forms a topological Ramsey space, here we extend that argument in the case of finite sequences of trees. For $(X_i)_{i\in d} \in \mathcal{S}_n((U_i)_{i\in d})$, $n\in \omega$ and $(T_i)_{i\in d}\in \mathcal{S}_{\infty}((U_i)_{i\in d})$ we define: $$(T_i)_{i\in d}\upharpoonright n=\Big(\bigcup_{m<n}(T_i(m))_{i\in d} \Big), \text{ and}$$
    
    $[(X_i)_{i\in d}, (T_i)_{i\in d}]=\{\, (T'_i)_{i\in d}\in \mathcal{S}_{\infty}((T_i)_{i\in d}): (T'_i)_{i\in d}\upharpoonright n=(X_i)_{i\in d}\, \}.$\\ With that definition $\mathcal{S}_{\infty}((U_i)_{i\in d})$ becomes a topological space where the above sets are its basic open sets.
     
     For $(T_i)_{i\in d}\in \mathcal{S}_{\infty}((U_i)_{i\in d})$ the sequence $r_n((T_i)_{i\in d})$ of finite approximations (restrictions) is defined as follows:\\
     \begin{center} $r_n((T_i)_{i\in d})= (T_i)_{i\in d}\upharpoonright n$ \end{center}
     
     Thus the set of all finite approximations to elements of $\mathcal{S}_{\infty}((U_i)_{i\in d})$ is the set
     \begin{center} $\mathcal{S}_{<\infty}((U_i)_{i\in d})=\bigcup_{n\in \omega} \mathcal{S}_n((U_i)_{i\in d})$ \end{center}
     
of strong subtrees of $(U_i)_{i\in d}$ of finite height. The inclusion order on $\mathcal{S}_{\infty}((U_i)_{i\in d})$ is finitized as follows:\\

     \begin{center} $(X_i)_{i\in d}\subseteq_{fin} ( Y_i)_{i\in d}$ iff $(X_i)_{i\in d}=(Y_i)_{i\in d}=\emptyset$ or $(X_i)_{i\in d}\subseteq (Y_i)_{i\in d}$ and $(X_i)_{i\in d}(\max)\subseteq (Y_i)_{i\in d}(\max)$ \end{center} where $(X_i)_{i\in d}(\max)$ and $(Y_i)_{i\in d}(\max)$ denote the maximal levels of the strong subtrees $(X_i)_{i\in d},(Y_i)_{i\in d}$ respectively. Finitized in this way the space $$(\mathcal{S}_{\infty}((U_i)_{i\in d}),\subseteq, r)$$ is easily seen to satisfy the following list of axioms:\\

        $\bf{A.1}$
  \begin{enumerate}
         \item{}  $r_0((X_i)_{i\in d})=r_0((Y_i)_{i\in d})$ for all $(X_i)_{i\in d},(Y_i)_{i\in d}\in \mathcal{S}_{<\infty}((U_i)_{i\in d})$;\\
          \item{}$(X_i)_{i\in d}\neq (Y_i)_{i\in d}$ implies that $r_n((X_i)_{i\in d})\neq r_n((Y_i)_{i\in d})$ for some $n$;\\
         \item{} $r_n((X_i)_{i\in d})=r_m((Y_i)_{i\in d})$ implies $n=m$ and $r_k((X_i)_{i\in d})=r_k((Y_i)_{i\in d})$ for all $k\leq n$.
     \end{enumerate}
     
       $\bf{A.2}$
     \begin{enumerate}
     \item{} $\{\,(X_i)_{i\in d}\subseteq_{fin} (Y_i)_{i\in d}\, \}$ is finite for all $(Y_i)_{i\in d}$;\\
     \item{} $(T^0_i)_{i\in d}\subseteq (T^1_i)_{i\in d}$ iff $\forall n\, \exists m$ $r_n((T^0_i)_{i\in d})\subseteq_{fin} r_m((T^1_i)_{i\in d})$;\\
     \item{} $\forall (X_i)_{i\in d}, (Y_i)_{i\in d}$ $[(X_i)_{i\in d}\sqsubseteq (Y_i)_{i\in d} \wedge (Y_i)_{i\in d} \subseteq_{fin} (Z_i)_{i\in d}$ implies $\exists (W_i)_{i\in d}\sqsubseteq (Z_i)_{i\in d}\text{ such that } (X_i)_{i\in d}\subseteq_{fin} (W_i)_{i\in d}]$.
     \end{enumerate}
     
   $\bf{A.3}$
   \begin{enumerate}
     \item{} If $[(X_i)_{i\in d}, (T_i)_{i\in d}]\neq \emptyset$ then $[(X_i)_{i\in d}, (T'_i)_{i\in d}]\neq \emptyset$ for all $(T'_i)_{i\in d}\in [(X_i)_{i\in d}, (T_i)_{i\in d}]$;\\
     \item{} $(T^0_i)_{i\in d}\subseteq (T^1_i)_{i\in d}$ and $[(X_i)_{i\in d}, (T^0_i)_{i\in d}]\neq \emptyset$ imply that there exists $(T'_i)_{i\in d}\in [(X_i)_{i\in d}, (T^1_i)_{i\in d}]$ such that  $$ \emptyset \neq[(X_i)_{i\in d}, (T'_i)_{i\in d}]\subseteq   [(X_i)_{i\in d}, (T^0_i)_{i\in d}].$$
     \end{enumerate}
    
     The following requirement, that forms the pigeon hole principle in our case, requires some proof.\\

      $\bf{A.4}$\\
  Let $\mathcal{O}\subseteq \mathcal{S}_{l+1}((U_i)_{i\in d})$ and $[(X_i)_{i\in d}, (T_i)_{i\in d}]\neq \emptyset$, where the height of $(X_i)_{i\in d}$ is $l$ and we assume that $(T_i)_{i\in d}\upharpoonright l=(X_i)_{i\in d}$. There exists $(T'_i)_{i\in d}\in [(X_i)_{i\in d}, (T_i)_{i\in d}]$ such that $r_{l+1}[(X_i)_{i\in d}, (T'_i)_{i\in d}]\subseteq \mathcal{O}$ or $r_{l+1}[(X_i)_{i\in d}, (T'_i)_{i\in d}]\subseteq \mathcal{O}^{c}$. Where
  \begin{eqnarray*}
     r_{l+1}[(X_i)_{i\in d}, (T'_i)_{i\in d}]=\{ (Y_i)_{i\in d}\in \mathcal{S}_{l+1}((U_i)_{i\in d}) &:& (Y_i)_{i\in d}=(T''_i)_{i\in d} \upharpoonright l+1\text{ for } \\ (T''_i)_{i\in d}\in [(X_i)_{i\in d}, (T'_i)_{i\in d}] \}.
     \end{eqnarray*}

       \begin{proof} Let $ u_0,\dots, u_{p-1}$ be a one-to-one enumeration of the set of nodes of $\bigcup_{i\in d} U_i$ that are immediate successors of some node of the set $\{ \bigcup_{i\in d}X_i(l-1)\}$. For $j\in p$, let: $V_j=\{ t\in U_i: u_j\leq t\}$, where $i$ is such that $u_j\in U_i$.
    Note that every ${t}=(t_0,\dots, t_{p-1})\in \prod_{j\in p} V_j(k)$, for some $k\in \omega$, determines the strong subtree
    $$b({t})=(T_i)_{i\in d}\upharpoonright l\cup (t_0,\dots, t_{p-1})$$ of $(T_i)_{i\in d}$ of length $l+1$. Let $\mathcal{O}^{\star}=\{{t}:  b({t})\in \mathcal{O}\}$. \\
    By the strong subtree version of Halpern L\"auchli theorem (\cite{Ha-Lau}, \cite{To} Theorem 3.2), there is a sequence of strong subtrees  $(F_j)_{j\in p} \in \mathcal{S}_{\infty}( (U_i[u_j])_{j\in p})$, all with the same level sets, such that: $\bigcup_{n\in \omega} \prod_{j\in p} F_j(n)$ is a subset of either $\mathcal{O}^{\star}$ or its complement.
    Let: $(T'_i)_{i\in d}=((T_i)_{i\in d}\upharpoonright l)^\frown (F_j)_{j\in p}$.
    Then $(T'_i)_{i\in d}$ is a strong subtree of $(U_i)_{i\in d}$  that belongs to the basic open set $[(X_i)_{i\in d}, (T_i)_{i\in d}]$ such that $r_{l+1}[(X_i)_{i\in d}, (T'_i)_{i\in d}]$ is included either in $\mathcal{O}$ or its complement.  \end{proof}
    
    Therefore the space $(\mathcal{S}_{\infty}((U_i)_{i\in d}), \subseteq, r)$ forms a topological Ramsey space. We provide to the reader a brief explanation of  what it means $(\mathcal{S}_{\infty}((U_i)_{i\in d}), \subseteq, r)$ to be a topological Ramsey space. We say that a subset $\mathcal{X}$ of $\mathcal{S}_{\infty}((U_i)_{i\in d})$ is \emph{Ramsey} if for every $[(Y_i)_{i\in d}, (V_i)_{i\in d}]\neq \emptyset$ there is a $(F_i)_{i\in d}\in [(Y_i)_{i\in d}, (V_i)_{i\in d}]$ such that  either $[(Y_i)_{i\in d}, (F_i)_{i\in d}]\subset \mathcal{X}$ or $[(Y_i)_{i\in d}, (F_i)_{i\in d}]\subset \mathcal{X}^c$, and    $\mathcal{X}$  is \emph{Ramsey null} if for every $[(Y_i)_{i\in d}, (V_i)_{i\in d}]\neq \emptyset$, there is $(F_i)_{i\in d}$ such that $[(Y_i)_{i\in d}, (F_i)_{i\in d}]\cap \mathcal{X}=\emptyset$. Being a topological Ramsey space it means that Ramsey subsets of $\mathcal{S}_{\infty}((U_i)_{i\in d})$ are exactly those with the Baire property and that meager sets are Ramsey null. 
    
      Recall that a mapping $f:A \to B$ between two topological spaces is Suslin measurable, if the preimage $f^{-1}(O)$ of every open subset $O$ of $B$ belong to the minimal $\sigma-$field of subsets of $A$ containing closed sets and being closed under the Suslin operation, see \cite{Ke}. 
      
      As a consequence of the fact that $(\mathcal{S}_{\infty}((U_i)_{i\in d}), \subseteq, r)$ forms a topological Ramsey space is that its field of Baire measurable subsets coincides with that of Ramsey and is closed under the Suslin operation. Therefore for any finite coloring, where each color is Suslin measurable, the assertion of the following theorem is immediate.

    \begin{theorem}  For every finite Suslin measurable coloring of the set $\mathcal{S}_{\infty}((U_i)_{i\in d})$, there exists a strong subtree $(T_i)_{i\in d}\in \mathcal{S}_{\infty}((U_i)_{i\in d})$ such that $\mathcal{S}_{\infty}((T_i)_{i\in d})$ is monochromatic\end{theorem}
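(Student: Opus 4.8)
The plan is to deduce this theorem directly from the fact, established in the preceding paragraphs, that $(\mathcal{S}_{\infty}((U_i)_{i\in d}),\subseteq,r)$ is a topological Ramsey space, i.e.\ that it verifies axioms $\mathbf{A.1}$--$\mathbf{A.4}$, together with the abstract Ellentuck-type theorem from \cite{To} which asserts that in any such space the Ramsey sets are exactly the sets with the Baire property (and the meager sets are Ramsey null). First I would fix a finite partition $\mathcal{S}_{\infty}((U_i)_{i\in d})=\mathcal{X}_0\cup\dots\cup\mathcal{X}_{k-1}$ into Suslin measurable pieces. Since the field of Baire-measurable subsets of a topological Ramsey space is closed under the Suslin operation and contains the closed sets, each $\mathcal{X}_m$ has the Baire property, hence each $\mathcal{X}_m$ is Ramsey in the abstract sense recalled just above the statement.

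Next I would run the standard fusion-style argument that turns "each piece is Ramsey" into "one piece is homogeneous on a subspace". Applying the definition of Ramsey to $\mathcal{X}_0$ with the basic open set $[\emptyset,(U_i)_{i\in d}]$ gives $(T^0_i)_{i\in d}$ with $[\emptyset,(T^0_i)_{i\in d}]\subseteq\mathcal{X}_0$ or $[\emptyset,(T^0_i)_{i\in d}]\subseteq\mathcal{X}_0^c$. In the first case we are done with $m=0$; in the second case, $\mathcal{S}_{\infty}((T^0_i)_{i\in d})$ is covered by $\mathcal{X}_1\cup\dots\cup\mathcal{X}_{k-1}$, and $\mathcal{S}_{\infty}((T^0_i)_{i\in d})$ is itself (canonically homeomorphic to) a copy of the same topological Ramsey space, so we may iterate. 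After at most $k$ steps we reach some $(T_i)_{i\in d}\in\mathcal{S}_{\infty}((U_i)_{i\in d})$ and some $m<k$ with $\mathcal{S}_{\infty}((T_i)_{i\in d})=[\emptyset,(T_i)_{i\in d}]\subseteq\mathcal{X}_m$, which is exactly the conclusion that $\mathcal{S}_{\infty}((T_i)_{i\in d})$ is monochromatic.

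The only real content beyond bookkeeping is the verification of the axioms, and among those $\mathbf{A.4}$ is the step I expect to be the main obstacle. Axioms $\mathbf{A.1}$--$\mathbf{A.3}$ are the "easy" combinatorial/closure properties and were asserted to be routine; $\mathbf{A.4}$ is the genuine pigeonhole principle, and its proof is the one place where real Ramsey-theoretic input is needed — here the dense version of the Halpern--L\"auchli theorem for strong subtrees (\cite{Ha-Lau}, and \cite{To} Theorem~3.2), applied to the finite sequence $(V_j)_{j\in p}$ of trees hanging below the immediate successors of the top level of $(X_i)_{i\in d}$. Since that verification is carried out in the excerpt immediately above the theorem, for the proof of the theorem itself I would simply invoke it: having checked $\mathbf{A.1}$--$\mathbf{A.4}$, the abstract theory of \cite{To} yields the Ellentuck-type characterization, and the theorem follows.

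\begin{proof} By the verification of axioms $\mathbf{A.1}$--$\mathbf{A.4}$ above, $(\mathcal{S}_{\infty}((U_i)_{i\in d}),\subseteq,r)$ is a topological Ramsey space; hence, by the abstract Ellentuck-type theorem of \cite{To}, a subset of $\mathcal{S}_{\infty}((U_i)_{i\in d})$ is Ramsey if and only if it has the Baire property, and moreover the field of Baire-measurable sets is closed under the Suslin operation and contains the closed sets. Let $\mathcal{S}_{\infty}((U_i)_{i\in d})=\mathcal{X}_0\cup\dots\cup\mathcal{X}_{k-1}$ be a finite partition into Suslin measurable pieces. Each $\mathcal{X}_m$ then has the Baire property, so each $\mathcal{X}_m$ is Ramsey.

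We argue by induction on $k$. If $k=1$ the statement is trivial. Assume it holds for $k-1$ colors. Applying the Ramsey property of $\mathcal{X}_0$ to the basic open set $[\emptyset,(U_i)_{i\in d}]$, we obtain $(T^0_i)_{i\in d}\in\mathcal{S}_{\infty}((U_i)_{i\in d})$ with
$$[\emptyset,(T^0_i)_{i\in d}]\subseteq\mathcal{X}_0\quad\text{or}\quad[\emptyset,(T^0_i)_{i\in d}]\subseteq\mathcal{X}_0^{\,c}.$$
In the first case $\mathcal{S}_{\infty}((T^0_i)_{i\in d})=[\emptyset,(T^0_i)_{i\in d}]$ is monochromatic with color $0$ and we take $(T_i)_{i\in d}=(T^0_i)_{i\in d}$. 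In the second case the sets $\mathcal{X}_1\cap\mathcal{S}_{\infty}((T^0_i)_{i\in d}),\dots,\mathcal{X}_{k-1}\cap\mathcal{S}_{\infty}((T^0_i)_{i\in d})$ partition $\mathcal{S}_{\infty}((T^0_i)_{i\in d})$ into $k-1$ Suslin measurable pieces; since $(\mathcal{S}_{\infty}((T^0_i)_{i\in d}),\subseteq,r)$ is again a topological Ramsey space of the same kind, the inductive hypothesis yields $(T_i)_{i\in d}\in\mathcal{S}_{\infty}((T^0_i)_{i\in d})$ with $\mathcal{S}_{\infty}((T_i)_{i\in d})$ monochromatic. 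In either case $(T_i)_{i\in d}$ is as required. \end{proof}
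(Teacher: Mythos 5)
Your proposal is correct and follows essentially the same route as the paper: the paper also deduces the theorem directly from the verification of axioms $\mathbf{A.1}$--$\mathbf{A.4}$ (with $\mathbf{A.4}$ resting on the Halpern--L\"auchli theorem for strong subtrees) and the abstract fact from \cite{To} that in a topological Ramsey space the Suslin measurable (Baire) sets are Ramsey, declaring the conclusion immediate. Your explicit finite induction over the colors is just the routine iteration the paper leaves implicit, and it is carried out correctly.
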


     The first consequence is the following:
     \begin{coroll} For every $\mathcal{F}\subseteq \mathcal{S}_{<\infty}((U_i)_{i\in d})$, there is a strong subtree $(T_i)_{i\in d}$ of $(U_i)_{i\in d}$ such that either
     \begin{enumerate}
     \item{} $\mathcal{S}_{<\infty}((T_i)_{i\in d})\cap \mathcal{F}=\emptyset$ or
     \item{} For every $(T'_i)_{i\in d}\in \mathcal{S}_{\infty}((T_i)_{i\in d})$ there is some $n$ such that $(T'_i)_{i\in d} \upharpoonright n\in \mathcal{F}$.
     \end{enumerate}
     \end{coroll}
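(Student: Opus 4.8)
The plan is to derive this as the Nash--Williams type dichotomy available in any topological Ramsey space, applied to $(\mathcal{S}_{\infty}((U_i)_{i\in d}),\subseteq,r)$, whose axioms \textbf{A.1}--\textbf{A.4} have just been verified. Consider
$$\mathcal{X}=\{\,(T'_i)_{i\in d}\in \mathcal{S}_{\infty}((U_i)_{i\in d}) : \text{there is } n\in\omega \text{ with } r_n((T'_i)_{i\in d})\in \mathcal{F}\,\}.$$
First I would note that $\mathcal{X}$ is open: if $(T'_i)_{i\in d}\in\mathcal{X}$ is witnessed by $n$, then every member of the basic open set $[r_n((T'_i)_{i\in d}),(U_i)_{i\in d}]$ has the same $n$th approximation and hence also lies in $\mathcal{X}$. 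An open set trivially has the Baire property, so since $\mathcal{S}_{\infty}((U_i)_{i\in d})$ is a topological Ramsey space, $\mathcal{X}$ is Ramsey.

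Next I would apply the Ramsey property of $\mathcal{X}$ to the nonempty basic open set $[\emptyset,(U_i)_{i\in d}]$, where $\emptyset=r_0((U_i)_{i\in d})$ by \textbf{A.1}(1): this yields $(T_i)_{i\in d}\in\mathcal{S}_{\infty}((U_i)_{i\in d})$ with either $[\emptyset,(T_i)_{i\in d}]\subseteq\mathcal{X}$ or $[\emptyset,(T_i)_{i\in d}]\subseteq\mathcal{X}^{c}$. In the first case alternative (2) holds verbatim, since $[\emptyset,(T_i)_{i\in d}]=\mathcal{S}_{\infty}((T_i)_{i\in d})$ and membership in $\mathcal{X}$ says precisely that some approximation $(T'_i)_{i\in d}\upharpoonright n$ lies in $\mathcal{F}$.

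It remains to handle the case $[\emptyset,(T_i)_{i\in d}]\subseteq\mathcal{X}^{c}$, in which I claim alternative (1) holds. Suppose otherwise and fix $(X_i)_{i\in d}\in\mathcal{S}_{<\infty}((T_i)_{i\in d})\cap\mathcal{F}$, of height $n$. The key point, and the only step requiring care, is that even though $(X_i)_{i\in d}$ need not be an initial segment of $(T_i)_{i\in d}$, the operation $U[\cdot]$ from Section 3, applied coordinatewise, produces $(T_i[X_i])_{i\in d}$: an infinite strong subtree of $(T_i)_{i\in d}$ having $(X_i)_{i\in d}$ as its initial segment, whose $d$ components have a common level set precisely because $(X_i)_{i\in d}$ and $(T_i)_{i\in d}$ do. Then $(T_i[X_i])_{i\in d}\in[\emptyset,(T_i)_{i\in d}]\subseteq\mathcal{X}^{c}$, while $r_n((T_i[X_i])_{i\in d})=(X_i)_{i\in d}\in\mathcal{F}$ forces $(T_i[X_i])_{i\in d}\in\mathcal{X}$ --- a contradiction. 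Hence $\mathcal{S}_{<\infty}((T_i)_{i\in d})\cap\mathcal{F}=\emptyset$. The main obstacle is exactly this last verification: passing from the finite strong subtrees of $(T_i)_{i\in d}$ that arise as approximations to \emph{all} finite strong subtrees, keeping the level sets of the $d$ coordinates aligned; the rest is the standard reduction of the Nash--Williams dichotomy to the abstract Ramsey property.
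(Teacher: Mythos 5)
Your argument is essentially the paper's own proof: the paper colors elements of $\mathcal{S}_{\infty}((U_i)_{i\in d})$ according to whether they have a restriction in $\mathcal{F}$ (a Borel, indeed open, coloring) and applies Theorem 6, which is exactly your use of the Ramsey property of the open set $\mathcal{X}$ on $[\emptyset,(U_i)_{i\in d}]$. Your extra verification in the homogeneous ``no'' case --- extending a finite strong subtree $(X_i)_{i\in d}\in\mathcal{F}$ to $(T_i[X_i])_{i\in d}\in\mathcal{S}_{\infty}((T_i)_{i\in d})$ with aligned level sets --- is correct and merely spells out the step the paper leaves implicit.
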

     \begin{proof} Color elements of $ \mathcal{S}_{\infty}((U_i)_{i\in d})$ according to whether they have a restriction in $\mathcal{F}$ or not. This is a Borel coloring. Now apply Theorem $6$.\end{proof}
     
     We give now a proof for Lemma $5$.
        
     \begin{proof} Let $\mathcal{G}$ be an $\alpha$-uniform family on $(U_i)_{i\in d}$. By Lemma $3$, $\mathcal{G}$ is Nash-Williams. Let $G_0\cup \dots \cup G_{l-1}$ be a finite partition of $\mathcal{G}$. Apply the previous corollary successively to each of the colors.
     
     \end{proof}
     
     Therefore, any $\alpha$-uniform family $\mathcal{G}$ on $(U_i)_{i\in d}$ is Ramsey.

         \section{Strong subtree envelopes}
         
            At this point we would like to introduce a key notion of this paper, the strong subtree envelope of a given subset of $U$. This notion is discussed in \cite{To}.

 We recall that for $s,t\in U$, we have defined:

 $$s\wedge t = \max \{ \, u\in U: u\leq s\text{ and }   u\leq t\, \}.$$
 The $\wedge$-closure of $A\subseteq U$ is the set:
    $$A^\wedge=\{ \, s\wedge t: s,t \in A\, \}.$$
    We point out that in the definition of $A^\wedge$ $s$ can be equal to $t$. Note that $A\subseteq A^\wedge$ and that  $A^\wedge$ is a rooted tree. Finally,     for $A\subseteq U$, let
    $$||A||=|\{\, |s\wedge t|: s,t\in A\,\}|$$
    be the number of levels of $U$ which $A^\wedge$ intersects. 
    \begin{definition}
    The \emph{strong subtree envelope} of a node set $A\subseteq U$ is the following subset of $\mathcal{S}_{||A||}(U)$ defined by: \\
    \begin{center} $\mathcal{C}^U_A=\{\, X\in \mathcal{S}_{||A||}(U): A^\wedge\subseteq X\,\}.$\end{center}

    Notice that if $X,Y\in \mathcal{C}^U_A$, then $L_X=L_Y$ and also $ i_{b^{||A||},X}\circ i^{-1}_{ b^{||A||},Y}$ is the identity on $A$.

For a given  finite level set $L\subseteq L_U=\omega$, its strong subtree envelope is defined by:
         $$\mathcal{C}^U_{L}=\{ X\in \mathcal{S}_{|L|}(U):\, L_X=L\}.$$
If in addition $L$ is such that  such that  $L_A< L$, then we define
          \begin{center} $\mathcal{C}^U_{(A,L)}=\{\, X\in \mathcal{S}_{(||A||+|L|)}(U): A^\wedge\subset X$ and the last $|L|$ many levels of $X$ lie on the levels of $U$ indicated by $L \},$\end{center}
i.e., $\mathcal{C}^U_{(A,L)}$ is the set of all $X\in\mathcal{S}_{(||A||+|L|)}(U) $ such that $A^\wedge\subset X$ and such that for every  $i\in |L|  $  one has that $X(||A||+i)\subset U(l_i)$, where $\{l_0,\dots,l_{|L|-1}\}$ is the increasing enumeration of $L$.
      \end{definition}

        Similarly, given a finite sequence of trees $(U_i)_{i\in d}$ we define the strong subtree envelope of $(N_i,L_i)_{i\in d}$ in $(U_i)_{i\in d}$, where for all $i\in d$, $N_i\subset U_i$, $L_i\subset L_{U_i}$ and $L_{N_i}< L_i$, as follows:
    $$ \mathcal{C}^{(U_i)_{i\in d}}_{(N_i,L_i)_{i\in d}}=\{\, (X_i)_{i\in d}\in \mathcal{S}_n((U_i)_{i\in d})  \, : \,  \text{  $\forall i\in d$   $\exists Y_i\in  \mathcal{C}^{U_i}_{(N_i,L_i)}$ with $ Y_i\subseteq  X_i$}  \},$$  where $n=| \bigcup_{i\in d}(L_{N^\wedge_i}\cup L_i)|$.\\

    We make the observation that if $(X_i)_{i\in d}\in \mathcal{C}^{(U_i)_{i\in d}}_{(N_i,L_i)_{i\in d}}$ then $X_i$ is not necessarily a member of $\mathcal{C}^{U_i}_{(N_i,L_i)}$.

    We introduce now the notion of a translation of a strong subtree.

        \begin{definition}
   Let $X$ be a strong subtree of $U$ of finite height with root $r_X$, by a $\mathrm{translation}$ of $X$ we mean a strong subtree $Y$ of $U$, with root $r_Y\neq r_X$ such that the following two conditions hold:
   \begin{enumerate}
   \item{} $L_Y=L_X$;
   \item{} for every node $t\in X$ there is a corresponding node $s\in Y$ with $|s|=|t|$, and if $s,t$ are viewed as finite sequences of $\{0,\dots, b-1\}$, then $t\upharpoonright (|t|\setminus |r_X|)= s\upharpoonright (|s|\setminus |r_Y|)$.
     \end{enumerate}
     
        \end{definition}
        
    In other words we allow strong subtrees to be translated horizontally.

     For a subset $A$ of $U$ its translation is obtained as follows: Let $X\in \mathcal{C}^U_A$ and $Y$ be a translation of $X$. Set $ i_{ b^{||A||},Y}\circ i^{-1}_{b^{||A||},X}(A)$ a translation of $A$.

     Similarly we define translation in the context of a $d$-sequence of $b$-branching trees $(U_i)_{i\in d}$. For $(X_i)_{i\in d}\in \mathcal{S}_n((U_i)_{i\in d})$ by a translation of $(X_i)_{i\in d}$ we mean another $(Y_i)_{i\in d}\in \mathcal{S}_n((U_i)_{i\in d})$ such that $Y_i$ is a translate of $X_i$ for at least one $i\in d$.

         In the inductive step of the proof of Theorem $7$,we are going to consider translations of uniform families defined on $U(t)$ at $U(s)$, for $s,t\in U$ with $s\neq t$. That is why we consider only horizontal translations of trees.\\
          
          We extend now the notion of agreement of Definition $10$ on node-level sets as follows:

    \begin{definition}\label{nodeslevelsrelation}
    Given a finite node set $N \subset U$ we say that two finite strong subtrees $X,Y$ of $U$ \emph{agree} on $N$ if $N\subseteq X$ and $N \subseteq Y$ up to translation, i.e. either $N\subseteq X,Y$ or $N \subseteq X$ and $N' \subseteq Y$, where $N'$ is a translate of $N$. We denote that $X,Y$ \emph{agree} on $N$ by $X:N=Y:N$.

    For a finite level set $L$ now, we say that $X\in \mathcal{S}_n(U),Y\in \mathcal{S}_{n'}(U)$ \emph{agree} on $L$, if for every $m\in L$ we have $X(k),Y(k')\subseteq U(m)$, for some $k\in n$ and some $k'\in n'$. We denote that $X,Y$ agree on $L$ by $X:L=Y:L$. 
    
    Given now a node-level set $(N,L)$ where $L_N<L$, we say that $X,Y$ \emph{agree on} $(N,L)$, if they agree on $N$ and on $L$. We denote that $X,Y$ agree on $(N,L)$ by $X:(N,L)=Y:(N,L)$.
    
    Similarly $(X_i)_{i\in d}$ and $(Y_i)_{i\in d}$, finite strong subtrees of $(U_i)_{i\in d}$, $\mathrm{agree}$ on $(N_i, L_i)_{i\in d}$ if $X_i, Y_i$ agree on $(N_i, L_i)$ for every $i\in d$.
    \end{definition}

   To demonstrate how Definition $10$ and $15$ relate we consider $X'\in \mathcal{C}^U_{(N,L)}$ and $Y'\in\mathcal{C}^U_{(N',L)}$, both of height $n$. Definition $10$ says that $X'$ and $Y'$ agree on $(N,L)$, $N\subseteq b^n, L\subseteq n$, if and only if $N=N'$, $\iota_{b^n,X'}\circ \iota^{-1}_{b^n,Y'}$ is the identity on $N$ and if for every
 $l\in L$ the $l$th level of $X'$ and the $l$th level of $Y'$ both lie on the same level of $U$. Definition $15$ says that $X' \in \mathcal{C}^U_{(N,L)}$ and $Y'\in \mathcal{C}^U_{(N',L)}$ agree on $(N,L)$ if and only if $\iota^{-1}_{b^n,X'}(N)=\iota^{-1}_{b^n,Y'}(N')$ and for every
 $l\in L$ the $l$th level of $X'$ and the $l$th level of $Y'$ both lie on the same level of $U$. Therefore, it allows the node set $N$ to be translated. It allows also agreement between finite strong subtrees of different height.

    For a strong subtree  $X\in \mathcal{C}_{(N,L)}^U$, we define $X^{in}\sqsubseteq X$ as follows: If the node-level set $(N,L)$ is a node set, i.e. $L=\emptyset$, then $X^{in}=X$. If both $N\neq \emptyset$ and $L\neq \emptyset$, then by $X^{in}$ we denote the initial segment of $X$ that covers the node set $N$ and as a result $ N^\wedge$.
   Consider the case of the very first level $l_0$ of the level set $L=\{l_0,\dots ,l_{m}\,\}$ being as $l_0=\max L_N+1$. Notice in this case we cannot choose the successors $N'$ of the nodes in $N$ that lie on $l_0-1$. They get imposed to us by the choice of $l_0$. This pair gives rise to the same strong subtree envelope as the pair with node set $N\cup N'$ and level set $L'=\{ l_1,\dots, l_m\}$.  Therefore we can assume from now on that in any node-level set the level set lies further from the node set.
   Finally if the node-level set is only a level set $(L)$, by $X^{in}$ we denote the initial segment of $X$ whose level set forms an initial segment of $L_U$ i.e. $L_{X^{in}}\sqsubset L_{U}$ and as a result $X^{in}$ forms an initial segment of $U$. In this case $| \{ Y: Y=X^{in}, X\in \mathcal{C}^U_L\}|=1$. If there is not a subset $L_{X^{in}}$ of $L_X$ so that $L_{X^{in}}\sqsubseteq L_U$, then $X^{in}$ is not defined.
  
    In other words $X^{in}\sqsubseteq X$ is the finite strong subtree of $U$ that is a cover of the set of nodes that are in any member of the envelope $\mathcal{C}_{(N,L)}^U$ such that $X\in \mathcal{C}_{(N,L)}^U$. Therefore if we eliminate one node from that set, on any of the resulting strong subtrees $T$ of $U$ it holds that $\mathcal{C}_{(N,L)}^T=\emptyset$.

 Consider now the $d$-sequence $(X_i)_{i\in d}\in \mathcal{C}^{(U_i)_{i\in d}}_{(N_i,L_i)_{i\in d}}$ of strong subtree of $(U_i)_{i\in d}$.
 Notice that it might not be the case that $L_{\cup N_i} < \cup L_i$. Then let $$L_{in}=\{l\in \cup L_i: l \leq \max L_{\cup N_i}\}.$$

 The strong subtree envelope $\mathcal{C}^{U_j}_{(N_i,L_i)_{i\in d}}$ in a fixed coordinate $j\in d$, is defined as  the strong subtree envelope of the set of nodes $ N_j\subset U_j$ and the set of levels $$L^j=\cup_{i\in d}L_i \bigcup_{i\in d, i\neq j}\{L_{{N_i}^\wedge}\}.$$ Then we set $$L^j_{in}=\{ l\in L^j: l\leq \max L_{N_j}\}.$$ Let $n=| L_{ N_j^{\wedge}}\cup  L^j|$ and $\sigma:L_{ N_j^{\wedge}}\cup  L^j\to n$ is the increasing bijection witnessing that $n=|L_{ N_j^{\wedge}}\cup  L^j|$. We define the strong subtree envelope $\mathcal{C}^{U_j}_{(N_i,L_i)_{i\in d}}$ as follows:\\  $\mathcal{C}^{U_j}_{(N_i,L_i)_{i\in d}}=\{\, Y: Y\in \mathcal{S}_n{(U_j)}\text{, } N_j^{\wedge}\subseteq Y \text{ and for every }k\in L^j\text{ with }\sigma(k)=k'\text{,  }Y(k')\subset U_j(k)\,\}.$

 Then the strong subtree envelop of $(N_i,L_i)_{i\in d}$ in $(U_i)_{i\in d}$ as defined above, has another equivalent formulation: $$\mathcal{C}^{(U_i)_{i\in d}}_{(N_i,L_i)_{i\in d}}=\{ (X_i)_{i\in d}: X_j\in \mathcal{C}^{U_j}_{(N_i,L_i)_{i\in d}}\text{ for } j\in d \}$$

  In this case now, for $X_j\in \mathcal{C}^{U_j}_{(N_i,L_i)_{i\in d}}$, $j\in d$ fixed, we define $X_j^{in}\sqsubseteq X_j$ its initial segment that covers $N_j\cup L^j_{in}$, if it is defined. Set
 \begin{equation}
   n=\max \{ \, \vline X_j^{in}\vline: \,j\in d\, \}.
   \end{equation}
             Then define the initial segment $((X_i)_{i\in d})^{in}= (Z_i)_{i\in d}$, of $(X_i)_{i\in d}$ so that the height of $(Z_i)_{i\in d}$ is $n$ and for all $i\in d$ we have $Z_i\sqsubseteq X_i$. Notice that the only possibility of $((X_i)_{i\in d})^{in}$ not being defined is the case that $\bigcup_{i\in d} N_i=\emptyset$ and $L^j=\cup_{i\in d}L_i$ does not contain an initial segment of $L_U$.

         \begin{center}
        \section{Main theorem}
        \end{center}
        
        To state our main theorem we need the following definition:\\
        \begin{definition}\label{canonical}
        A mapping $c$ defined on a uniform family $\mathcal{G}$ of finite strong subtrees on $U$ is called a $\mathrm{ canonical}$ coloring of $\mathcal{G}$ on $U$ if there exists a family of node-level sets on $U$ denoted by $\mathcal{T}$ and a mapping $f:\mathcal{G} \to \mathcal{T}$ such that: 
        \begin{enumerate}
        \item{}   For every $X\in \mathcal{G}$ if $f(X)=(N^X,L^X)$ then $N^X\subseteq X$, $L^X\subseteq L_X$ and $L_{N^X}< L^X$.
        \item{} For any $X,Y\in \mathcal{G}$, $c(X)=c(Y)$ if and only if $f(X)=f(Y)$ up to translation of the node set. 
        \end{enumerate}
        The second condition is equivalent to the existence of a one-to-one, up to translation, mapping $\phi$ defined on $\mathcal{T}$ such that $\phi(f(X))=c(X)$ for all $X\in \mathcal{G}$.\\
        
        Similarly for the case of a $d$-sequence of $b$-branching trees $(U_i)_{i\in d}$.
                 A mapping $c$ defined on a uniform family $\mathcal{G}$ of finite strong subtrees on $(U_i)_{i\in d}$ is called a $\mathrm{canonical}$ coloring of $\mathcal{G}$ on $(U_i)_{i\in d}$ if there exists a family of $d$-sequences of node-level sets on $(U_i)_{i\in d}$ denoted by $\mathcal{T}$ and a mapping $f:\mathcal{G} \to \mathcal{T}$ such that: 
                 \begin{enumerate}
        \item{} For every $(X_i)_{i\in d} \in \mathcal{G}$ if $f((X_i)_{i\in d})=(N^{X_i},L^{X_i})_{i\in d}$ then $N^{X_i}\subseteq X_i$, $L^{X_i}\subseteq L_{X_i}$ and $L_{N^{X_i}}< L^{X_i}$ for all $i\in d$.\\
        \item{} For any $(X_i)_{i\in d},(Y_i)_{i\in d}\in \mathcal{G}$, $c((X_i)_{i\in d})=c((Y_i)_{i\in d})$ if and only if $f((X_i)_{i\in d})=f((Y_i)_{i\in d})$ up to translation of node set. 
        \end{enumerate}
        
        The second condition is equivalent to the existence of a one-to-one, up to translation, mapping $\phi$ defined on $\mathcal{T}$ such that $\phi(f((X_i)_{i\in d})=c((X_i)_{i\in d}))$ for all $(X_i)_{i\in d}\in \mathcal{G}$.\\
         \end{definition}

         In other words two finite strong subtrees $X,Y$ of $U$ get mapped in the same place by $c$ if and only if they agree on a node-level set $(N,L)\in \mathcal{T}$ in the sense of Definition $15$, i.e. $$X:(N,L)=Y:(N,L)$$

           \begin{rem} We must remark that if we take the union of the strong subtree envelopes of all the node-level sets in $\mathcal{T}$ and by passing to a strong subtree, if necessary, we get another uniform family of finite strong subtrees. That new uniform family has rank less than or equal to the rank of $\mathcal{G}$. For a proof see at the very end of this section, Proposition $3$.
        \end{rem}
        The main theorem of this paper is the following:\\
        
        \begin{theorem}\label{maintheorem} For any uniform family of finite strong subtrees $\mathcal{G}$ on $U$, and every mapping $c$ on $\mathcal{G}$, there exists $T\in \mathcal{S}_{\infty}(U)$ such that $c\upharpoonright (\mathcal{G} \upharpoonright T)$ is a canonical coloring of $ \mathcal{G}\upharpoonright T$ on $T$.
       \end{theorem}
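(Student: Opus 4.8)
The plan is to establish, by transfinite induction on the rank $\alpha$, the following statement for all $d$ at once: for each $\alpha$-uniform family $\mathcal{G}$ of $d$-tuples of finite strong subtrees of a $d$-sequence $(U_i)_{i\in d}$ and each coloring $c$ of $\mathcal{G}$, there is $(T_i)_{i\in d}\in\mathcal{S}_\infty((U_i)_{i\in d})$ for which $c\upharpoonright(\mathcal{G}\upharpoonright(T_i)_{i\in d})$ is canonical in the sense of Definition \ref{canonical}; Theorem \ref{maintheorem} is the case $d=1$. Carrying all $d$ simultaneously is forced, because deleting the common root level of a $d$-tuple of strong subtrees produces a $d\cdot b$-tuple, so the inductive restrictions $\mathcal{G}(t)$ attached to a tuple $t=(t_i)_{i\in d}$ of nodes on one level are uniform families of $(d\cdot b)$-tuples of strictly smaller rank by Definition \ref{uniform}(2)--(3). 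The base cases are immediate: if $\alpha=0$ then $\mathcal{G}=\{\emptyset\}$; if $\alpha=n$ is finite then $\mathcal{G}=\mathcal{S}_n((U_i)_{i\in d})$, and the conclusion is the $d$-sequence version of Milliken's canonical Theorem \ref{milliken-canon}, which follows from the Halpern--L\"auchli pigeonhole $\mathbf{A.4}$ of Section $4$ by the Erd\H{o}s--Rado-style reduction of Section $2$: stabilize, using the pigeonhole, the finite coloring recording on a fixed representative pair which of the finitely many node-level sets the two trees must agree on, then read the node-level set off.

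For the inductive step, assume the statement in all ranks below $\alpha$ and let $\mathcal{G}$ be $\alpha$-uniform with coloring $c$. For each level $m$ and each tuple $t\in\prod_{i\in d}U_i(m)$, stripping the root carries $\mathcal{G}$ to the uniform family $\mathcal{G}(t)$ of $(d\cdot b)$-tuples of strong subtrees of $(U_i[t_i^\frown j_i])$ --- of rank $\beta$ if $\alpha=\beta+1$, of some rank $\alpha_t<\alpha$ if $\alpha$ is a limit (Definition \ref{uniform}) --- together with the induced coloring $c_t(\,\cdot\,):=c(t^\frown\,\cdot\,)$. The inductive hypothesis provides, for each $t$, a $(d\cdot b)$-tuple of infinite strong subtrees on which $c_t$ is canonical, with canonical data $(\mathcal{T}_t,f_t)$. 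Since $\mathcal{S}_\infty((U_i)_{i\in d})$ is a topological Ramsey space (axioms $\mathbf{A.1}$--$\mathbf{A.4}$, Section $4$), a fusion argument then yields one $(T_i)_{i\in d}$ such that $c_t\upharpoonright(\mathcal{G}(t)\upharpoonright(T_i)_{i\in d})$ is canonical for \emph{every} tuple $t$ of nodes of the level product of $(T_i)_{i\in d}$: enumerate those $t$ level by level, and at stage $n$ shrink the part of $(T_i)_{i\in d}$ above level $n$ by the inductive hypothesis applied inside the basic open set cut out by the already-fixed initial segment, exactly as in the Pudl\'ak--R\"odl diagonalization; axioms $\mathbf{A.2}$ and $\mathbf{A.3}$ ensure the diagonal limit is still an infinite strong subtree.

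It remains to make the local data uniform in $t$ and to detect the contribution of the root. The type of $(\mathcal{T}_t,f_t)$ --- the combinatorial pattern of relative node positions and levels that $f_t$ selects, together with the isomorphism type of $\mathcal{T}_t$ --- is a finitely-valued function of $t$, so by the $d$-sequence form of Theorem \ref{milliken-canon} in the case of rank-one families, applied to the nodes of the level product of $(T_i)_{i\in d}$, we may, after a further shrinking, take it constant up to the horizontal translations of Section $5$ (or, in the remaining alternative of that theorem, depending on $t$ only through its level, which is then absorbed into the level part of the datum); this is exactly why Definition \ref{canonical}(2) only demands agreement up to translation of the node set. We thereby obtain a single inner candidate $g$ on $\mathcal{G}\upharpoonright(T_i)_{i\in d}$, sending $(X_i)_{i\in d}$ with root-tuple $t$ on level $m$ to the node-level datum obtained by applying a translate of $f_t$ to the $(d\cdot b)$-tuple hanging below $t$, possibly augmented by the level $m$ itself. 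Whether $m$ must be recorded is decided by the amalgamation Lemma $8$ (the tree analogue of the Pudl\'ak--R\"odl Lemma $2$): applied to $\mathcal{G}(t)$ and a horizontal translate of $\mathcal{G}(t')$ for two root-tuples on distinct levels, equipped with the one-to-one maps induced by $f_t$ and $f_{t'}$, it returns either that $c$ never identifies two members of $\mathcal{G}\upharpoonright(T_i)_{i\in d}$ whose roots lie on different levels --- so $m$ is kept in $g$ --- or that across levels the color is already fixed by the recorded data --- so $m$ is dropped; iterating Lemma $8$ and then Ramsey-stabilizing this two-valued dichotomy over all pairs of levels, via another application of Theorem \ref{milliken-canon}, makes the choice uniform.

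The resulting map $f$ is inner by construction, and Definition \ref{canonical}(2) holds by combining the three reductions: local canonicity of every $c_t$, constancy of the type, and the root dichotomy. The last thing to verify is that the target $\mathcal{T}:=\{f(X):X\in\mathcal{G}\upharpoonright T\}$ --- equivalently, the union over $X$ of the strong-subtree envelopes $\mathcal{C}^{(T_i)_{i\in d}}_{f(X)}$ --- is, after one final shrinking, again a uniform family of rank at most $\alpha$; this is Proposition $3$ at the end of this section. I expect the genuinely delicate points to be (i) the bookkeeping in the fusion, ensuring the infinitely many local canonical maps $f_t$ are realized simultaneously while an infinite strong subtree survives, and (ii) showing the amalgamated $f$ is \emph{minimal} --- that no strictly smaller inner assignment still separates the colors, equivalently that the $\mathcal{T}$ produced is genuinely uniform rather than merely Nash--Williams --- which is precisely where Lemma $8$ and Proposition $3$ carry the load, and where the translation subtleties of Section $5$ and Definition \ref{nodeslevelsrelation} must be tracked with care throughout.
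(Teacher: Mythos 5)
Your outline does track the paper's strategy (simultaneous induction on the rank for all $d$, localization to the lower-rank families $\mathcal{G}(t)$, a fusion making every $c_t$ canonical on one $(T_i)_{i\in d}$, an application of the comparison Lemma $8$, and Proposition $3$ for the uniformity of the target), but the amalgamation step has a genuine gap. Your candidate map $g$ only ever augments the local datum $f_t$ by the root \emph{level} $m$, and you invoke Lemma $8$ only for root-tuples lying on \emph{distinct} levels. This cannot describe colorings that separate members of $\mathcal{G}$ by their actual root node. Take $\mathcal{G}=\mathcal{S}_2(U)$ and $c(X)=$ the root of $X$: every $c_t$ is constant, so every $f_t$ is trivial and your ``type'' is already constant, and your $g$ then records at most the root level, hence identifies two trees rooted at \emph{different nodes of the same level} although their $c$-colors differ; condition $(2)$ of Definition \ref{canonical} fails in the direction $f(X)=f(Y)\Rightarrow c(X)=c(Y)$. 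The paper's inductive step runs the Lemma $8$ dichotomy over \emph{all} pairs of nodes --- same level as well as different levels, using the translations of Section $5$ and the extended Lemma $12$ when the stripped families have different arities --- and, when the disjoint-image alternative holds against every other node, it inserts the root node itself into every node set (the family $s_0\cup\mathcal{T}^{s_0}=\{(s_0\cup N,L):(N,L)\in\mathcal{T}^{s_0}\}$), not merely its level. That case is absent from your construction, and it is exactly where the hard work of separating colors attached to distinct roots is done.

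Two further points. The claim that the ``type'' of $(\mathcal{T}_t,f_t)$ is a finitely-valued function of $t$ is unjustified once the local rank is infinite (the families $\mathcal{T}_t$ are then infinite objects whose ranks may vary with $t$), and even where a rank-one canonical theorem could be applied to such a type coloring, its one-to-one alternative --- again the case forcing the root into the node set --- is not addressed; the paper avoids any type-stabilization by comparing the derived families $\mathcal{F}(\mathcal{G})(t)$ pairwise with Lemma $8$ inside the fusion. Finally, your finite-rank base case is circular as stated: the pigeonhole $\mathbf{A.4}$ governs finite colorings only, and ``stabilize which node-level set the two trees must agree on'' presupposes that same-colored trees agree on some node-level set, which is the content of Theorem \ref{milliken-canon} itself. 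Either quote Milliken's canonical theorem as the paper does for the rank-one alternatives, or treat finite ranks as ordinary successor steps of the induction (as the paper's argument effectively does, needing only rank $0$ as the base); the shortcut through the Erd\H{o}s--Rado-style reduction of Section $2$ does not work, since that reduction derives Erd\H{o}s--Rado \emph{from} a canonization theorem, not from a pigeonhole.
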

       Moreover we have also its version for finite sequences of trees:

        \begin{theorem} For any uniform family of finite strong subtrees $\mathcal{G}$ on $(U_i)_{i\in d}$, and every mapping $c$ on $\mathcal{G}$, there exists $(T_i)_{i\in d}\in \mathcal{S}_{\infty}((U_i)_{i\in d})$ such that $c\upharpoonright ( \mathcal{G}\upharpoonright (T_i)_{i\in d})$ is a canonical coloring of $ \mathcal{G}\upharpoonright (T_i)_{i\in d}$ on $(T_i)_{i\in d}$.\end{theorem}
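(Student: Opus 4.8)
The plan is to prove this theorem by exactly the argument used for Theorem~\ref{maintheorem}, i.e.\ by transfinite induction on the rank $\alpha$ of the uniform family $\mathcal{G}$, but with every step carried out inside the product space $\mathcal{S}_{\infty}((U_i)_{i\in d})$, which Section $4$ shows to be a topological Ramsey space. Three ingredients drive the recursion. First, the product pigeonhole Theorem $6$, from which one extracts a product form of Milliken's canonical Theorem~\ref{milliken-canon}: for any coloring of $\mathcal{S}_n((U_i)_{i\in d})$ there are $(T_i)_{i\in d}$ and a single $d$-sequence of node-level sets $(N_i,L_i)_{i\in d}$ witnessing canonicity, obtained from Theorem $6$ the way Theorem~\ref{milliken-canon} is obtained from the single-tree Milliken pigeonhole. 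Second, the $d$-sequence analogue of the Pudl\'ak--R\"odl gluing Lemma $2$ (the tree gluing lemma announced in the Introduction), proved by the same induction: for uniform families $\mathcal{S},\mathcal{T}$ on $(U_i)_{i\in d}$ and maps $f,g$ that are one-to-one up to translation, there is $(T_i)_{i\in d}$ with either $\mathcal{S}\upharpoonright (T_i)_{i\in d}=\mathcal{T}\upharpoonright (T_i)_{i\in d}$ and $f=g$ there, or $f(\mathcal{S}\upharpoonright(T_i)_{i\in d})\cap g(\mathcal{T}\upharpoonright(T_i)_{i\in d})=\emptyset$. Third, the fact recorded in Remark $2$ that the union of the strong subtree envelopes of a family of $d$-sequences of node-level sets carries, after passing to a strong subtree, a uniform family of rank at most $\alpha$; this keeps the range family $\mathcal{T}$ of the canonizing map $f$ under control and makes the recursion well founded.

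\textbf{Base case and a preliminary reduction.} If $\alpha=0$ then $\mathcal{G}=\{\emptyset\}$ and one sets $f(\emptyset)=(\emptyset,\emptyset)_{i\in d}$; if $\alpha=n$ is finite then $\mathcal{G}=\mathcal{S}_n((U_i)_{i\in d})$ and the product form of Theorem~\ref{milliken-canon} finishes it with a singleton $\mathcal{T}$. When $\alpha$ is infinite one first colours each node-tuple $t=(t_i)_{i\in d}$ of the level product $\bigcup_{n}\prod_{i\in d}U_i(n)$ by the rank of the uniform family $(\pi_{j_i}\mathcal{G}(t_i))_{i\in d}$ and applies the product pigeonhole (using the finiteness and cofinality clauses of Definition $9$) so that, after shrinking, these ranks are monotone along every infinite chain of the level product; this is the product analogue of the reduction made just after Theorem~\ref{milliken-canon}.

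\textbf{Inductive step.} Assume the statement for all ranks $<\alpha$. Fix a node-tuple $t=(t_i)_{i\in d}$ and a successor choice $(j_i)_{i\in d}$ with $j_i\in b$; by Definition $9$ the family $(\pi_{j_i}\mathcal{G}(t_i))_{i\in d}$ on $(U_i[t_i^\frown j_i])_{i\in d}$ is uniform of rank $<\alpha$, so the induction hypothesis gives a strong subtree of it and a canonizing map into a family of $d$-sequences of node-level sets. Using the amalgamation available from axioms $\mathbf{A.1}$--$\mathbf{A.4}$ of the Ramsey space $\mathcal{S}_{\infty}((U_i)_{i\in d})$ one diagonalizes these choices over all $t$ and all $(j_i)_{i\in d}$ into a single $(T_i)_{i\in d}$ on which the induction hypothesis has been applied simultaneously at every node. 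For $(X_i)_{i\in d}\in\mathcal{G}\upharpoonright(T_i)_{i\in d}$ whose sequence of roots is $t$, one then merges the inductively supplied node-level data on the $(T_i[t_i^\frown j_i])_{i\in d}$ with the level datum recording which levels of $(T_i)_{i\in d}$ the first splitting level of $(X_i)_{i\in d}$ occupies, getting a candidate $f((X_i)_{i\in d})$; normalizing this through the auxiliary sets $L^{j}$, $L^{j}_{in}$ and the initial segments $((X_i)_{i\in d})^{in}$ of Section $5$ puts it into the standard form required by Definition~\ref{canonical}. Finally the product gluing lemma, applied one $c$-colour class at a time and proceeding along the node-tuples in a fixed order (as in the single-tree case), forces the needed coherence between distinct roots, so that $c(X)=c(Y)$ iff $X$ and $Y$ receive the same $d$-sequence of node-level sets up to translation of the node part. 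Taking $\mathcal{T}$ to be the resulting family and invoking Remark $2$ closes the induction.

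\textbf{The main obstacle.} The genuinely new difficulty, already foreshadowed by the bookkeeping set up in Section $5$, is that in the product one need not have $L_{\bigcup_i N_i}<\bigcup_i L_i$, so before the envelopes $\mathcal{C}^{(U_i)_{i\in d}}_{(N_i,L_i)_{i\in d}}$ become comparable each $d$-sequence of node-level sets must be brought to normal form coordinate by coordinate. Arranging that the translation relation of Definition~\ref{nodeslevelsrelation} holds coordinatewise and simultaneously across all roots --- while keeping everything inside one strong subtree $(T_i)_{i\in d}$ --- is precisely where the product Milliken theorem and the product gluing lemma have to be combined, and it is the part that does not transcribe verbatim from the single-tree argument. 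Once this normalization is in place the rest is a routine rewriting of the proof of Theorem~\ref{maintheorem}, with $U$ replaced by $(U_i)_{i\in d}$ and $\mathcal{S}_{\infty}(U)$ by $\mathcal{S}_{\infty}((U_i)_{i\in d})$ throughout.
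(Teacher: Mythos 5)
Your overall route is the paper's own: Theorems 7 and 8 are established by one simultaneous transfinite induction on the rank, all the auxiliary machinery (the axioms $\mathbf{A.1}$--$\mathbf{A.4}$ and Theorem 6 for $\mathcal{S}_{\infty}((U_i)_{i\in d})$, the gluing Lemma 8 and its companions, Remark 2 and Proposition 3 controlling the rank of the envelope family) is already formulated and proved for $d$-sequences, and the paper explicitly states that the inductive step of Theorem 8 is identical with that of Theorem 7: fuse so that every induced coloring $c_t$ is canonical, then apply Lemma 8 repeatedly across roots and assemble $\mathcal{T}$ from the families $\mathcal{T}^t$. So your three ingredients and the normalization bookkeeping of Section 5 are exactly the ones the paper uses.

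There is, however, one step in your inductive step that fails as written. You fix a successor choice $(j_i)_{i\in d}$ and apply the induction hypothesis to the slice family $(\pi_{j_i}\mathcal{G}(t_i))_{i\in d}$ on $(U_i[t_i^\frown j_i])_{i\in d}$. But the induction hypothesis needs a coloring, and $c$ induces none on such a slice: $c$ colors the full elements $t^\frown (X_j)_{j\in d\cdot b}$ of $\mathcal{G}$, a fixed member of a slice extends to many such elements with different colors, and canonical data obtained for the $b^{d}$ slices separately cannot recover the joint dependence of $c_t$ on all $d\cdot b$ coordinates; the subsequent diagonalization over $t$ and $(j_i)_{i\in d}$ does not repair this. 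What is needed --- and what the paper does --- is to apply the induction hypothesis for $d\cdot b$-sequences to the pair $(\mathcal{G}(t),c_t)$, where $\mathcal{G}(t)$ is regarded as a uniform family of rank $<\alpha$ on the $d\cdot b$-sequence $(U_i)_{i\in d}(t)$ and $c_t((X_j)_{j\in d\cdot b})=c(t^\frown (X_j)_{j\in d\cdot b})$; this is precisely why Theorem 8 must be proved for all finite lengths simultaneously with Theorem 7. A secondary imprecision: when the disjointness alternative of Lemma 8 holds at a root $s_0$ against all other roots, the canonical datum must incorporate the root node itself into the node sets (the paper's $s_0\cup \mathcal{T}^{s_0}$), not merely the level that the root occupies, since a level datum cannot separate trees rooted at distinct nodes of the same level whose color ranges have been made disjoint. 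With these corrections your argument coincides with the paper's.
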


        Notice that the range of $c$ in both of the above theorems is at most countably infinite. The proofs of Theorems $7$  and $8$ are done by induction on the rank of the uniform family. The case of a $0$-uniform family $\mathcal{G}$ is trivially true. Now assuming that Theorems $7$ and $8$ hold for any $\beta$-uniform family of finite strong subtrees, where $\beta<\alpha$, we are going to show that they both hold for any $\alpha$-uniform family $\mathcal{G}$ on $U$ and any $\alpha$-uniform family $\mathcal{G}$ on $(U_i)_{i\in d}$ respectively. For the inductive step we need to establish some new results. Up to Section $6.1$ we develop the tools that we need in order to do our inductive step.\\
            
        Let  us consider an $\alpha$-uniform family $\mathcal{G}$ on $U$  and an equivalence relation $c$ on it, or equivalently a mapping. By definition $\mathcal{G}(t)$ is a $\beta$-uniform family on $U(t)$, for some $\beta<\alpha$. The inductive hypothesis applies for $c_t$ on $\mathcal{G}(t)$ defined by $c_t((X_i)_{i\in b})=c(t^\frown (X_i)_{i\in b})$ to give us a $U'_t\in \mathcal{S}_{\infty}(U(t))$, $U'_t(0)=t$, where the restriction $c_t\upharpoonright (\mathcal{G}(t)\upharpoonright U'_t(t))$ is a canonical coloring of $\mathcal{G}(t)\upharpoonright U'_t(t)$ on $U'_t(t)$.

        By a simple fusion sequence we get a $T\in \mathcal{S}_{\infty}(U)$ such that for every $t\in T$ the restriction $c_t$ of $c$ on $\mathcal{G}(t)\upharpoonright T(t)$ defined by $c_t((X_i)_{i\in b})=c(t^\frown (X_i)_{i\in b})$ is canonical on $T(t)$. To see that consider $t_0\in U(1)$. By the inductive hypothesis we get $U'_{t_0}\in \mathcal{S}_{\infty}(U[t_0])$, $U'_{t_0}(0)=t_0$, where $c_{t_0}\upharpoonright (\mathcal{G}(t_0)\upharpoonright U'_{t_0}(t_0))$ is a canonical coloring of $\mathcal{G}(t_0)\upharpoonright U'_{t_0}(t_0)$ on $U'_{t_0}(t_0)$. Consider the level set $L_{U'_{t_0}}$. Proceed in $t_1\in U(1)$, let $U''_{t_1}\in \mathcal{S}_{\infty}(U[t_1])$be such that $U''_{t_1}(0)=t_1$, $L_{U''_{t_1}}=L_{U'_{t_0}}$. By the inductive hypothesis we get a $U'_{t_1}\in \mathcal{S}_{\infty}(U''_{t_1})$, $U'_{t_1}(0)=t_1$ where the restriction $c_{t_1}$ is a canonical coloring of $\mathcal{G}(t_1)\upharpoonright U'_{t_1}(t_1)$ on $U'_{t_1}(t_1)$. Repeat that for all nodes $t_i\in U(1)$, $i\in b$. Consider $L_{U_{t_{b-1}}}$. Let $U_{t_i}\in \mathcal{S}_{\infty}(U'_{t_i})$ so that $U_{t_i}(0)=t_i$, $L_{U_{t_i}}=L_{U_{t_{b-1}}}$, for all $i\in b-1$. Set $T(0)=U(0)$, $T(1)=U(1)$ and $T(2)=\bigcup_{i\in b}U_{t_i}(1)$. Suppose we have constructed $T(n)$ and we would like to choose $T(n+1)$. Let $(s_i)_{i\in b^n}$ be an enumeration of the nodes in $T(n)$. Start with $s_0$. By the inductive hypothesis we get $U'_{s_0}\in \mathcal{S}_{\infty}(U[s_0])$, $U'_{s_0}(0)=s_0$ where $c_{s_0}\upharpoonright (\mathcal{G}(s_0)\upharpoonright U'_{s_0}(s_0))$ is a canonical coloring of $\mathcal{G}(s_0)\upharpoonright U'_{s_0}(s_0)$ on $U'_{s_0}(s_0)$. Consider the level set $L_{U'_{s_0}}$. Proceed in $s_1\in T(n)$, let $U''_{s_1}\in \mathcal{S}_{\infty}(U[s_1])$, $U''_{s_1}(0)=s_1$ be such that $L_{U''_{s_1}}=L_{U'_{s_0}}$. By the inductive hypothesis we get a $U'_{s_1}\in \mathcal{S}_{\infty}(U''_{s_1})$, $U'_{s_1}(0)=s_1$ where the restriction $c_{s_1}$ is a canonical coloring of $\mathcal{G}(s_1)\upharpoonright U'_{s_1}(s_1)$ on $U'_{s_1}(s_1)$. Repeat that for all nodes $s_i\in T(n)$, $i\in b^n$. Consider $L_{U_{s_{b^n-1}}}$. Let $U_{s_i}\in \mathcal{S}_{\infty}(U'_{s_i})$ so that $U_{s_i}(0)=s_i$, $L_{U_{s_i}}=L_{U_{t_{b^n-1}}}$ for all $i\in b^n-1$. Set $T(n+1)=\bigcup_{i\in b^n}U_{s_i}(1)$. The limit of this fusion sequence $T\in \mathcal{S}_{\infty}(U)$ has the property that for every $t\in T$ the restriction $c_t$ of $c$ on $\mathcal{G}(t)\upharpoonright T(t)$ is a canonical coloring of $\mathcal{G}(t)\upharpoonright T(t)$ on $T(t)$. For notational simplicity we assume that $T=U$.

       Therefore we have that at each node $t$ of $U$ the restriction $c_t$ of $c$ on $\mathcal{G}(t)\upharpoonright U(t)$, defined by $c_t((X_i)_{i\in b})=c(t^\frown (X_i)_{i\in b})$, is canonical. As a result there exists a family of $b$-sequences of node-level sets, like $(N_i,L_i)_{i\in b}$, denoted by $\mathcal{T}^t$ and a mapping $f_t$ that satisfy conditions $(1)$ and $(2)$ of Definition $16$. The family $\mathcal{T}^t$, by the Remark $2$ above, gives rise to a $\gamma$-uniform family $\mathcal{F}(\mathcal{G})(t)$ on a strong subtree of $U(t)$. By a simple fusion sequence identical with the one just above, we can assume that $\mathcal{F}(\mathcal{G})(t)$ is defined on $U(t)$ for every $t\in U$. The mappings $f_t$ are defined on $\mathcal{G}(t)\upharpoonright U(t)$ and the one-to-one mappings $\phi_t$ are defined on $\mathcal{T}^t$ by $$\phi_t ((N_i, L_i)_{i\in b}=f_t((X_i)_{i\in b}))=c_t( (X_i)_{i\in b})$$ where $\mathcal{C}^{U(t)}_{(N_i,L_i)_{i\in d}}\subset \mathcal{F}(\mathcal{G})(t)$ and $t^\frown (X_i)_{i\in b}\in \mathcal{G}$.

    In that way we can think of $\mathcal{F}$ as a functor defined on the set of all pairs $(\mathcal{G},c)$ of a uniform family of finite strong subtrees on a tree $U$ with a fixed branching number and an equivalence relation $c$ on that family. For every $t\in U$, $\mathcal{F}(\mathcal{G})(t)$ is a uniform family on a strong subtree of $U(t)$ with rank less than or equal to that of $\mathcal{G}(t)$. By $\mathcal{F}(\mathcal{G})$ we denote the uniform family that results from the union of $t^\frown \mathcal{F}(\mathcal{G})(t)$, for all nodes $t$ of $U$.

        From now on we work with the uniform family $\mathcal{F}(\mathcal{G})$ and not with the original uniform family $\mathcal{G}$ that we started with. So all the definitions and notation developed so far apply to the resulting uniform family $\mathcal{F}(\mathcal{G})$. For simplicity reasons from this point up to the end of the paper, we will assume that $\mathcal{F}(\mathcal{G})$ is directly defined on $U$ instead of one of its infinite strong subtrees. As a consequence, $\mathcal{F}(\mathcal{G})(t)$ is assumed to be defined directly on $U(t)$, for all $t\in U$.
        In particular we consider the pair $(\mathcal{F}(\mathcal{G}), c')$ with $c'$ defined on $\mathcal{F}(\mathcal{G})$ by $c'(t^\frown(Y_i)_{i\in d})=\phi_t((N_i,L_i)_{i\in d})$, where $(Y_i)_{i\in d}\in \mathcal{C}^{U(t)}_{(N_i,L_i)_{i\in d}} \subset \mathcal{F}(G)(t)$ and $(N_i,L_i)_{i\in d}=f_t((X_i)_{i\in d})$ for a $(X_i)_{i\in d}\in \mathcal{G}(t)$, $t\in U$. We make identical assumptions in the case of $(U_i)_{i\in d}$.
        
        The last thing to notice is that given any mapping on the $n$-uniform family $\mathcal{S}_n((U_i)_{i\in d})$, by the inductive hypothesis of Theorem $8$, we can assume that the mapping is canonical. There is a family of node-level sets $\mathcal{T}$ that satisfies conditions $(1)$ and $(2)$ of the Definition $16$ and a mapping $f$. Consider the mapping $c^\star: \mathcal{S}_n((U_i)_{i\in d})\to n$ defined by $c^\star((X_i)_{i\in d})=i$ if $\mathcal{C}^{(U_i)_{i\in d}}_{f((X_i)_{i\in d})}$ contains strong subtrees of height equal to $i\in n$. By Theorem $4$ we get a strong subtree $(V_i)_{i\in d}\in \mathcal{S}_{\infty}((U_i)_{i\in d})$ on which $c^\star$ is constant and equal to some fixed $i_0$. Let $k$ be the cardinality of the set of node-level sets $\{ (N^j_i,L^j_i)_{i\in d, j\in k}\}$ such that for any $(Y_i)_{i\in d}\in \mathcal{C}^{(V_i)_{i\in d}}_{(N^j_i,L^j_i)_{i}}$ we have that its height is equal to $i_0$. 
        
        Consider the coloring $\tilde{c}:\mathcal{S}_n((V_i)_{i\in d})\to k$ defined by $\tilde{c}((X_i)_{i\in d})=j\in k$ if and only if $f((X_i)_{i\in d})=(N^j_i,L^j_i)_{i\in d}$. By an application of Theorem $4$ we get a $(V'_i)_{i\in d}\in \mathcal{S}_{\infty}((V_i)_{i\in d})$, so that $\tilde{c}\upharpoonright (V'_i)_{i\in d}$ is constant. Therefore we can assume that for any two node-level sets $(N_i, L_i)_{i\in d}, (N'_i, L'_i)_{i\in d}$ and any two members of their strong subtree envelopes $(X_i)_{i\in d}\in \mathcal{C}^{(U_i)_{i\in d}}_{(N_i, L_i)_{i\in d}}$ and $(Y_i)_{i\in d}\in \mathcal{C}^{(U_i)_{i\in d}}_{(N'_i, L'_i)_{i\in d}}$ one has: $\iota_{b^{i_0},X_i} \circ \iota^{-1}_{b^{i_0},Y_i}(N'_i)=(N_i)$ and $| L_i|= | L'_i |$ for all $i\in d$. Therefore any two members of $\mathcal{F}(\mathcal{G})$ are isomorphic in the sense of Definition $4$.

        We need to obtain some results that they are going to give us the inductive step. The first thing we notice is the following lemma:

    \begin{lemma}  Let $d, d'\in \omega$, $\mathcal{G}$ an $\alpha$-uniform family on $(U_i)_{i\in d}$ and $\lambda:\mathcal{G} \to \bigcup_{j\in d'}F_j$, where $F_j\neq U_i$ for all $i\in d$, $j\in d'$ are also $b$-branching trees of infinite length. There exists for all $i\in d$, $T_i\in\mathcal{S}_{\infty}(U_i)$,  and for all $j\in d'$, $ V_j\in \mathcal{S}_{\infty}(F_j)$, all having the same level sets, such that $$\lambda(\mathcal{G}\upharpoonright (T_i)_{i\in d})\bigcap (\cup_{j\in d'}V_j)=\emptyset$$
    \end{lemma}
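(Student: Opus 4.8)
The plan is to reduce this to a statement about equivalence relations on a single uniform family and then invoke the Ramsey property of uniform families (Lemma 5) together with the topological Ramsey space structure of $\mathcal{S}_\infty((U_i)_{i\in d})$ (Theorem 6 and its Corollary). First I would set up an auxiliary coloring: fix an arbitrary infinite strong subtree $(W_j)_{j\in d'}$ of $(F_j)_{j\in d'}$ with the same level set as a pre-chosen infinite strong subtree of $(U_i)_{i\in d}$ (this can be arranged since all the trees are $b$-branching of infinite length, and level sets can be matched freely). The key point is that each value $\lambda((X_i)_{i\in d})$ lives in some fixed $F_j$, and a node of $F_j$ occupies a definite level; so a value of $\lambda$ can only ``appear'' inside $\cup_{j\in d'}V_j$ if the level it occupies is a level of the $V_j$'s and the node itself is selected. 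The idea is to shrink the $V_j$'s and the $(U_i)_{i\in d}$ simultaneously so that the finitely-many-at-a-time decisions never collide.

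Concretely, I would run a fusion argument. Build decreasing sequences of strong subtrees $(U_i^{(n)})_{i\in d}$ and $(F_j^{(n)})_{j\in d'}$, all sharing a common level set $\ell_n$ at stage $n$, together with a single strictly increasing sequence of ``heights'' $h_0<h_1<\cdots$ chosen so that $\ell_n$ restricted below $h_{n+1}$ is frozen. At stage $n$, having fixed the first $n$ levels $(X_i)_{i\in d}\upharpoonright h_n$ of the trees to be constructed, one considers all strong subtrees $(X_i)_{i\in d}$ in $\mathcal{G}$ whose underlying node set is already determined by the frozen part — there are finitely many — and for each such $(X_i)_{i\in d}$ the value $\lambda((X_i)_{i\in d})$ is a fixed node of some $F_j$, lying at a fixed level. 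One then chooses the next block of levels of the $F_j$'s to \emph{avoid} that finite set of levels (or, if a collision at the level is unavoidable, to avoid those finitely many nodes when passing to the next strong subtree), which is possible because each $F_j$ has infinite length and $b$-branching gives room to dodge finitely many nodes at a given level by passing to a suitable strong subtree via Milliken's theorem (Theorem 4) or directly. Dovetailing this over all $n$ and all the finitely many ``newly completed'' members of $\mathcal{G}$ at each stage, the limits $T_i=\bigcup_n U_i^{(n)}\upharpoonright h_n$ and $V_j=\bigcup_n F_j^{(n)}\upharpoonright h_n$ have the same level set by construction, and every $(X_i)_{i\in d}\in\mathcal{G}\upharpoonright(T_i)_{i\in d}$ has $\lambda((X_i)_{i\in d})$ either at a level not in $L_{V_j}$ or at a node thrown out in the corresponding shrinking step; either way $\lambda((X_i)_{i\in d})\notin\cup_{j\in d'}V_j$.

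An alternative, slicker route, which I would actually prefer to write up, is to color $\mathcal{S}_\infty((U_i)_{i\in d})$ by a Borel (indeed clopen-ish) coloring and apply Theorem 6: say a strong subtree $(T_i)_{i\in d}$ is ``good'' if some member of $\mathcal{G}\upharpoonright(T_i)_{i\in d}$ has its $\lambda$-value landing on a node that lies at a level of $(T_i)_{i\in d}$ and is a selected node of a simultaneously-chosen copy of $(F_j)_{j\in d'}$ — but since $(F_j)_{j\in d'}$ is an external parameter, the cleanest formulation is via the Corollary to Theorem 6 applied to a suitable Nash--Williams family of finite approximations recording a ``bad collision''. If this family is homogenized to the empty side, we are done; if it is homogenized to the other side, one derives a contradiction with the freedom to translate/shift the $V_j$'s horizontally (Definition 13), since a collision cannot be forced for \emph{all} horizontal translates of a fixed finite piece of $(F_j)_{j\in d'}$.

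The main obstacle I anticipate is bookkeeping the level sets: the requirement that $(T_i)_{i\in d}$ and all $V_j$ share one common level set $L$ while simultaneously ensuring $\lambda$-values avoid that level set is exactly the tension that makes the lemma non-trivial, and it forces the fusion to interleave the choice of levels for the $U_i$'s with the choice of levels for the $F_j$'s in a careful alternating fashion (choose a level for the $F_j$'s, then skip it in the $U_i$'s, then use the next level for both, etc.). I would also need to be a little careful that ``all having the same level sets'' is compatible with the $\lambda$-values, which live at \emph{arbitrary} levels of the $F_j$'s, not just selected ones — but that is precisely what the avoidance step exploits: once $V_j$'s levels are thinned enough, a node at a non-selected level simply is not in $V_j$.
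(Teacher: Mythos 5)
There is a genuine gap in your fusion argument, and it is exactly at the point where the paper's proof uses its two structural tools: induction on the rank of $\mathcal{G}$, and the simultaneous inductive hypothesis of Theorem $8$, which allows one to assume that $\lambda$ (and its localizations $\lambda_t$ on the lower-rank families $\mathcal{G}(t)$) is \emph{canonical}. Your stage-$n$ step only protects against the $\lambda$-values of members of $\mathcal{G}$ that are already completely determined by the frozen part: you make the \emph{future} levels and nodes of the $F_j$'s dodge those finitely many values. But nothing in your construction prevents a member $(X_i)_{i\in d}\in\mathcal{G}\upharpoonright (T_i)_{i\in d}$ that is only completed at a later stage $m>n$ from having $\lambda((X_i)_{i\in d})$ equal to a node of $V_j$ that was already committed at stage $n$. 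Those nodes cannot be dodged any more, and for an arbitrary $\lambda$ you also cannot dodge on the $U$-side: for a fixed bad node $t$ already placed in $V_j$, the preimage $\lambda^{-1}(t)$ can meet every future region of $(U_i)_{i\in d}$, so avoiding finitely many nodes or levels ad hoc does not kill it. Your closing claim that every such value is ``either at a level not in $L_{V_j}$ or at a node thrown out in the corresponding shrinking step'' is therefore unjustified. The paper resolves precisely this backward-pointing problem: by canonicity, $\lambda^{-1}(t)$ is governed by a node-level set, i.e.\ it is (contained in) a strong subtree envelope $\mathcal{C}_{(N_i,L_i)_i}$, so subtracting the single level $L_{\lambda^{-1}(t)}$ from the common level set of both sides makes that envelope empty on the shrunk trees and guarantees that \emph{no later} member of $\mathcal{G}$ restricted to them can map to $t$. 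This level-subtraction step, repeated at every stage for every newly placed node of the $V_j$'s, is the heart of the proof, and it is unavailable without first canonizing $\lambda$; the whole lemma is then proved by induction on rank, localizing at tuples $t\in\prod_{i\in d}U_i(n)$ and applying the hypothesis to $\lambda_t$ on the lower-rank families $\mathcal{G}(t)$.

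Two smaller points. First, your ``slicker route'' via Theorem $6$ / Corollary $1$ is not viable as stated: the trees $(F_j)_{j\in d'}$ are external to the space $\mathcal{S}_{\infty}((U_i)_{i\in d})$, the proposed ``bad collision'' family of finite approximations is not exhibited as Nash--Williams or even well defined, and the final appeal to horizontal translations does not produce a contradiction, since a coloring can perfectly well force collisions for all translates of a fixed finite configuration. Second, your alternating scheme (``choose a level for the $F_j$'s, then skip it in the $U_i$'s, then use the next level for both'') conflicts with the requirement that the $T_i$'s and the $V_j$'s all share one common level set; in the paper every level removal is performed simultaneously on both sides precisely to preserve this.
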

   \begin{proof} 
   We are giving a proof by induction on the rank of $\mathcal{G}$. The case of a $0$-uniform family is vacuously true. Consider a $1$-uniform family $\mathcal{G}$ and a mapping $\lambda: \mathcal{G}\to \bigcup_{j\in d'}F_j$. By the inductive hypothesis of Theorem $8$ we can assume that $\lambda$ is canonical i.e. there exists a family $\mathcal{T}$ of $d$-sequences of node-level sets and a one-to-one mapping $\phi$ on $\mathcal{T}$. That family $\mathcal{T}$ gives rise to a uniform family $\mathcal{F}(\mathcal{G})$ on $(U_i)_{i\in d}$. If $\mathcal{T}=\emptyset$, so the rank of $\mathcal{F}(\mathcal{G})$ is zero, then the mapping $\lambda$ is constant and the assertion of our lemma is trivial. Let $\mathcal{T}\neq \emptyset$. By Remark $2$ observe that the rank of $\mathcal{F}(\mathcal{G})$ is equal to one because the rank of $\mathcal{G}$ is equal to one. As a result the set $\mathcal{T}$ contains $d$-sequences of either node or level sets, if otherwise by taking the strong subtree envelop of $(N_i,L_i)_{i\in d}\in \mathcal{T}$ we would get finite strong subtrees of height greater than $1$ contradicting that the rank of $\mathcal{F}(\mathcal{G})$ is equal to one. Therefore for $(N_i,L_i)_{i\in d}\in \mathcal{T}$ we have that either $N_i=\emptyset$ or $L_i=\emptyset$, for all $i\in d$.
   Pick strong subtrees $(X^1_i)_{i\in d}\in \mathcal{S}_1((U_i)_{i\in d})$and $(Y^1_j)_{j\in d'}\in \mathcal{S}_1((V_j)_{j\in d'})$ so that $L_{(X^1_i)_{i\in d}}=L_{(Y^1_j)_{j\in d'}}=n\in L_{(U_i)_{i\in d}}=\omega$ and such that: $$\lambda((X^1_i)_{i\in d})\notin \cup_{j\in d'}Y^1_j.$$ 
  
     For every $t\in \bigcup_{j\in d'}Y_j$, look at the level set, if non empty, of $\lambda^{-1}(t)$. Then for each such a $t$ subtract the level $L_{\lambda^{-1}(t)}$ from both level sets $L_{(U_i)_{i\in d}}$ and $L_{(V_j)_{j\in d'}}$. Having done that for all $t\in \bigcup_{j\in d'}Y^1_j$ we get strong subtrees $(T^1_i)_{i\in d}\sqsupseteq (X^1_i)_{i\in d}$ and $(V^1_j)_{j\in d'}\sqsupseteq(Y^1_j)_{j\in d'}$ with the same levels sets. To be precise $L_{(T^1_i)_{i\in d}}=L_{(V_j)_{j\in d'}}=L_{(U_i)_{i\in d}}\setminus \{L_{\lambda^{-1}(t)}: t\in \bigcup_{j\in d'}Y_j\}$.
     These two strong subtrees have the property that for any $(Z_i)_{i\in d}\in \mathcal{S}_1((T^1_i)_{i\in d})$, $\lambda((Z_i)_{i\in d})\notin \bigcup_{j\in d'}Y^1_j$. To see that notice that for any $t\in \bigcup_{j\in d'} Y^{1}_j$ if there exists $(Z_i)_{i\in d}\in \mathcal{C}^{(U_i)_{i\in d}}_{(N_i,L_i)_i}$ so that $\lambda((Z_i)_{i\in d})=t$, then $\mathcal{C}^{(T^{1}_i)_{i\in d}}_{(N_i,L_i)_i}=\emptyset$. This is because we have removed the level $L_{\lambda^{-1}(t)}=L_{(N_i,L_i)_i}$.
     
     Set $$(T_i)_{i\in d}\upharpoonright 1=(T^1_i)_{i\in d}\upharpoonright 1=(X^1_i)_{i\in d}\text{ and }(V_j)_{j\in d'}\upharpoonright 1=(V^1_j)_{j\in d'}\upharpoonright 1=(Y_j^1)_{i\in d'}.$$
     
         Suppose we have chosen the restrictions $(T_i)_{i\in d}\upharpoonright n=(X^{n}_i)_{i\in d}\sqsubseteq (T^n_i)_{i\in d}$ and $(V_j)_{j\in d'}\upharpoonright n=(Y_j^{n})_{j\in d'}\sqsubseteq (V^n_j)_{j\in d'}$. We would like to decide the $(T_i)_{i\in d}\upharpoonright n+1$ and $(V_j)_{j\in d'}\upharpoonright n+1$. Then pick a level $m' \in L_{(V^n_j)_{j\in d'}}$ such that the successors of each node in $Y^n_j(n-1)$ on $V^n_j(m')$ are more than $b^{n\cdot d}$. Now for any choice of successors $\bigcup_{i\in d} X^{n}_i(n-1)$ on $\cup_{i\in d} T^n_i(m')$ we can always choose successors of $\bigcup_{j\in d'} Y^{n}_j(n-1)$, that lie on $\bigcup_{j\in d'}V^n_j(m')$, so that the resulting strong subtrees $(X^{n+1}_i)_{i\in d}$ and $(Y^{n+1}_j)_{j\in d'}$, both of length $n+1$, satisfy: $\lambda((X'_i)_{i\in d})\notin \bigcup_{j\in d'} Y^{n+1}_j(n)$, for all $(X'_i)_{i\in d}\in \mathcal{S}_1(X^{n+1}_i)_{i\in d}$. For any $t\in \bigcup_{j\in d'} Y^{n+1}_j(n)$ subtract the level $L_{\lambda^{-1}(t)}$ from both level sets $L_{(T^{n}_i[X^{n+1}_i])_{i\in d}}$ and $L_{(V^{n}_j[Y^{n+1}_j])_{j\in d'}}$. Having done that for all $t\in \bigcup_{j\in d'} Y^{n+1}_j(n)$ we get strong subtrees $(T^{n+1}_i)_{i\in d}\in \mathcal{S}_{\infty}((T^{n}_i[X^{n+1}_i])_{i\in d})$ and $(V^{n+1}_j)_{j\in d'}\in \mathcal{S}_{\infty}((V^{n}_j[Y^{n+1}_j])_{j\in d'})$ such that $(X^{n+1}_i)_{i\in d}\sqsubseteq (T^{n+1}_i)_{i\in d}$ and $(Y^{n+1}_j)_{j\in d'}\sqsubseteq (V^{n+1}_j)_{j\in d'}$. These strong subtrees satisfy that for any $(X_i)_{i\in d}\in \mathcal{S}_1((T^{n+1}_i)_{i\in d})$ it holds that $\lambda((X_i)_{i\in d})\cap (\cup_{j\in d'} Y^{n+1}_j)=\emptyset$. To see that notice that for any $t\in \bigcup_{j\in d'} Y^{n+1}_j$ if there exists $(X_i)_{i\in d}\in \mathcal{C}^{(T^{n}_i)_{i\in d}}_{(N_i,L_i)_i}$ so that $\lambda((X_i)_{i\in d})=t$, then $\mathcal{C}^{(T^{n+1}_i[X^{n+1}_i])_{i\in d}}_{(N_i,L_i)_i}=\emptyset$. This is cause we have removed the level $L_{\lambda^{-1}(t)}=L_{(N_i,L_i)_i}$.

      Set $$(T_i)_{i\in d}\upharpoonright n+1=(X^{n+1}_i)_{i\in d}\text{ and }(V_j)_{j\in d'}\upharpoonright n+1=(Y_j^{n+1})_{i\in d'}.$$
      Let $(T_i)_{i\in d}$ be such that $(T_i)_{i\in d}\upharpoonright n=(X^{n}_i)_{i\in d}\text{ and }(V_j)_{j\in d'}\upharpoonright n=(Y_j^{n})_{i\in d'}$ for all $n\in \omega$. $(T_i)_{i\in d}$ and $(V_j)_{j\in d'}$ satisfy the conclusions of our lemma. Suppose not, let $(X_i)_{i\in d}\in \mathcal{S}_1((T_i)_{i \in d})$, $s\in \bigcup_{j\in d'} V_j$ with $|s|=k$, be so that $\lambda((X_i)_{i\in d})=s$. Then $s\in (Y_j^{k+1})_{j\in d'}$. By construction we have that $\lambda((X_i)_{i\in d})\cap(\bigcup_{j\in d'} Y_j^{k+1})=\emptyset$, a contradiction.
      
  So far we have shown that the statement of our lemma holds in the case of a uniform family of rank $0$ and of rank $1$. 
  Assume now that our lemma holds for any $\beta$-uniform family, $\beta<\alpha$ and consider an $\alpha$-uniform family $\mathcal{G}$ on $(U_i)_{i\in d}$. Pick an arbitrary $t=(t_0,\dots, t_{d-1})\in \prod_{i\in d} U_i(n)$, for some $n$, and $s=(s_0,\dots ,s_{d'-1})\in \prod_{i\in d'} V_i(n)$. By definition $\mathcal{G}(t)$ is a $\beta$-uniform family, $\beta<\alpha$, on $(U_i)_{i\in d}(t)$, a $d\cdot b$ sequence of trees. The inductive hypothesis applies on $$\lambda_t:(U_i)_{i\in d}(t)\to\bigcup_{i\in d'} F_i(s_i)$$ defined by $$\lambda_t((X_k)_{k\in d\cdot b})=\lambda (t^\frown (X_k)_{k\in d\cdot b})$$ to give us strong subtrees $(T^1_k)_{k\in d\cdot b}$ and $(V^1_m)_{m\in d' \cdot b}$ that satisfy\\ $\lambda (t^\frown (X_k)_{k\in d\cdot b})\notin \bigcup_{m\in d'\cdot b}V^1_m$, for all $(X_k)_{k\in d\cdot b}\in \mathcal{G}(t)\upharpoonright (T^1_k)_{k\in d\cdot b}$. Set $$(T^2_i)_{i\in d}=t^\frown (T^1_i)_{i\in d}\text{ and }(V^2_j)_{j\in d'}=s^\frown(V^1_j)_{j\in d'},$$
   and $$(T_i)_{i\in d}\upharpoonright 2=(T^2_i)_{i\in d}\upharpoonright 2 \text{ and }(V_j)_{j\in d'}\upharpoonright 2=(V^2_j)_{j\in d'}\upharpoonright 2.$$

   We can assume that $\{s_0,\dots ,s_{d'-1}\}\cap \lambda (t^\frown (T^1_k)_{k\in d\cdot b})=\emptyset$. To see that consider the level set of $\lambda_t^{-1}(s_j)$ and subtract a level $l_{s_j}$ in $L_{\lambda_t^{-1}(s_j)}$ from both level sets $L_{(U_i(t_i))_{i\in d}}$ and $L_{(F_j(s_j))_{j\in d'}}$. Having done that for all $s_j$, $j\in d'$ we get strong subtrees $(T'^1_i)_{i\in d}\sqsupseteq(t_0,\dots, t_{d-1})=t$ and $(V'^1_j)_{j\in d'}\sqsupseteq(s_0,\dots ,s_{d'-1})=s$ with the same levels sets. Namely $L_{(T'^1_i)_{i\in d}}=L_{(V'^1_j)_{j\in d'}}=L_{(U_i(t_i))_{i\in d}}\setminus \{l_{s_j}: s_j\in s=(s_0,\dots ,s_{d'-1})\}$.
   These two strong subtrees have the property that for any $(Z_i)_{i\in d\cdot b}\in \mathcal{G}(t)\upharpoonright (T'^1_i)_{i\in d}$, $\lambda_t((Z_i)_{i\in d})\notin \{s_0,\dots, s_{d'-1}\}$. To see that suppose there exist $(Z_i)_{i\in d\cdot b}\in \mathcal{G}(t)\upharpoonright (T'^1_i)_{i\in d}$, $(Z_i)_{i\in d\cdot b}\in \mathcal{C}^{(T'^1_i)_{i\in d}}_{(N_i,L_i)_i}$ and $s_j\in s=(s_0,\dots ,s_{d'-1})$ such that $\lambda_t((Z_i)_{i\in d})=s_j$. There exists $l_{s_j}\in L_{(N_i,L_i)_i}$ so that $l_{s_j}\notin L_{(T'^1_i)_{i\in d}}$. As a result $\mathcal{C}^{(T'^1_i)_{i\in d}}_{(N_i,L_i)_i}=\emptyset$, a contradiction.

 Suppose we have constructed $(T_i)_{i\in d} \upharpoonright n=(T^n_i)_{i\in d}Ê\upharpoonright n$ and $(V_j)_{j\in d'}\upharpoonright n=(V^n_j)_{j\in d'}\upharpoonright n$ so that for any $(X_i)_{i\in d}\in \mathcal{G}(t')$, $t'\in \prod_{i\in d}T^n_i(k)$ for $k<n$ it holds that $\lambda(t'^\frown(X_i)_{i\in d})\cap(\bigcup_{j\in d'}V^n_j\upharpoonright n)=\emptyset$.
We wish to decide $(T_i)_{i\in d}\upharpoonright n+1$ and $(V_j)_{j\in d'}\upharpoonright n+1$.
 
 Let $\{r_0, \dots, r_{d\cdot b^{n-1}-1}\}$ be a one-to-one enumeration of the nodes $\bigcup_{i\in d} T^n_i(n-1)$ and $\{ s'_0, \dots, s'_{d'\cdot b^{n-1}-1}\}$ a one-to-one enumeration of the nodes $\bigcup_{j\in d'} V^n_j(n-1)$.

   For any $r=(r_{k_i})_{i\in d}$, where for all $i\in d$, $r_{k_i}\in T^n_i$, consider the uniform family $\mathcal{G}(r)\upharpoonright (T^n_{i})_{i\in d}(r)$. Apply once more the inductive hypothesis on $(T^n_{i})_{i\in d}(r)$ and $(F^n_j(s'_m)_{m\in [j \cdot b^{n-1}, (j+1)\cdot b^{n-1})})_{j\in d'}$ to get strong subtrees $(T'^n_l)_{l\in d\cdot b}\in \mathcal{S}_{\infty}( (T^n_{i})_{i\in d}(r))$ and $(F'^n_f)_{f\in d'\cdot b^n}\in \mathcal{S}_{\infty} ((F^n_j(s'_m)_{m\in [j \cdot b^{n-1}, (j+1)\cdot b^{n-1})})_{j\in d'})$ that satisfy the conclusions of our lemma.  At this point we can assume that $\{ s'_0, \dots, s'_{d'\cdot b^{n-1}-1}\}\cap \lambda_r(\mathcal{G}(r)\upharpoonright (T^n_{i})_{i\in d}(r))=\emptyset$.  That can be guaranteed by the fact that $\lambda_r$ on $\mathcal{G}(r)\upharpoonright (T^n_{i})_{i\in d}(r)$  is a canonical coloring on a uniform family of rank $\beta< \alpha$. The argument is identical with the one just above.
     Having done that for all possible $r$ as above, we get strong subtrees $$(T^{n+1}_g)_{g\in d\cdot b^{n}}\in \mathcal{S}_{\infty}((T^n_i(r_{k})_{k\in [i\cdot b^{n-1}, (i+1)\cdot b^{n-1}})_{i\in d})$$ and $$(V^{n+1}_f)_{f\in d'\cdot b^{n}}\in \mathcal{S}_{\infty}((V^n_j(s'_{m})_{m\in [j\cdot b^{n-1}, (j+1)\cdot b^{n-1})})_{j\in d'})$$ all with the same level sets. Let $(T^{n+1}_i)_{i\in d}=((T^n_i)_{i\in d}\upharpoonright n)^\frown (T^{n+1}_g)_{g\in d\cdot b^{n}}$ and $(V^{n+1}_j)_{j\in d'}=((V^n_j)_{j\in d'}\upharpoonright n)^\frown (V^{n+1}_f)_{f\in d'\cdot b^{n}}$. Set
   $$(T_i)_{i\in d}\upharpoonright n+1=(T^{n+1}_i)_{i\in d}\upharpoonright n+1\text{ and }(V_j)_{j\in d'}\upharpoonright n+1=(V^{n+1}_j)_{j\in d'}\upharpoonright n+1$$
   
   For any $(X_i)_{i\in d}\in \mathcal{G}(t')$, $t'\in \prod_{i\in d}T^{n+1}_i(k)$ where $k<n+1$, it holds that $\lambda(t'^\frown (X_i)_{i\in d})\cap(\cup_{j\in d'}V^{n+1}_j\upharpoonright n+1)=\emptyset$.
     The resulting strong subtrees $(T_i)_{i\in d}$ such that $(T_i)_{i\in d}\upharpoonright n=(T^{n}_i)_{i\in d}\upharpoonright n\text{ and }(V_j)_{j\in d'}\upharpoonright n=(V_j^{n})_{i\in d}\upharpoonright n$ for all $n\in \omega$, satisfy the conclusions of our lemma, with an argument identical with that of the case of rank equal to $1$.

  \end{proof}

     Having established the previous lemma, we prove the following:   
        
            \begin{lemma} Let $d\in\omega$, $\mathcal{G}$ an $\alpha$-uniform family on $(U_i)_{i\in d}$ and $\lambda: \mathcal{G} \to (U_i)_{i\in d}$ be a mapping with the property that: $\lambda(X_0,\dots, X_{d-1})\notin \bigcup_{i\in d}X_i$, for all $(X_0,\dots,X_{d-1})\in \mathcal{G}$. There exists a strong subtree $(T_i)_{i\in d}\in \mathcal{S}_{\infty}((U_i)_{i\in d})$ such that $$ \lambda(\mathcal{G}\upharpoonright (T_i)_{i\in d})\bigcap (\cup_{i\in d}T_i)=\emptyset$$
    \end{lemma}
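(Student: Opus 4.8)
The plan is to carry out the same kind of fusion as in the proof of Lemma $6$, but in a \emph{diagonal} form: the range of $\lambda$ now lies inside the very trees $(U_i)_{i\in d}$ that carry $\mathcal{G}$, so the auxiliary trees ``$V_j$'' of Lemma $6$ must be built simultaneously with $(T_i)_{i\in d}$ itself, and it is precisely the hypothesis $\lambda(X_0,\dots,X_{d-1})\notin\bigcup_{i\in d}X_i$ that makes this diagonalization go through. First, passing to a strong subtree and using the inductive hypothesis of Theorem $8$, I would assume $\lambda$ is canonical, say $\lambda((X_i)_{i\in d})=\phi(f((X_i)_{i\in d}))$ with $f$ inner, $f((X_i)_{i\in d})=(N_i,L_i)_{i\in d}$ a $d$-tuple of node-level sets, and $\phi$ one-to-one up to translation on the family $\mathcal{T}$; let $\mathcal{F}(\mathcal{G})$ be the uniform family of rank $\le\alpha$ produced from $\mathcal{T}$ by Remark $2$. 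The avoidance hypothesis then says that for every $(N_i,L_i)_{i\in d}\in\mathcal{T}$ the node $\phi((N_i,L_i)_{i\in d})$ is not among the nodes of $\bigcup_{i\in d}N_i^{\wedge}$, and by one further application of the Milliken pigeonhole (Theorem $4$) together with Lemma $5$ I would also arrange that this value lies off every level occurring in the strong subtree envelope $\mathcal{C}^{(U_i)_{i\in d}}_{(N_i,L_i)_{i\in d}}$. Thus each value $\phi((N_i,L_i)_{i\in d})$ sits on a single level of $\bigcup_{i\in d}U_i$ which is ``free'' with respect to the corresponding envelope.

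The proof then proceeds by induction on the rank of $\mathcal{G}$. The rank-$0$ case is vacuous. For rank $1$ the canonical form exhibits $\lambda$ explicitly as a function of a fixed subset of the coordinates $t_i$ of its argument and possibly of their common level, and I would build $(T_i)_{i\in d}$ by a level-by-level fusion: at stage $k$ I commit the nodes of $(T_i)_{i\in d}$ on the newly chosen level of $\bigcup_{i\in d}U_i$, and then delete from the pool of admissible future levels of $(T_i)_{i\in d}$ the finitely many levels on which the $\lambda$-values ``activated'' at that stage sit --- possible because, by the avoidance hypothesis, none of those values has already been committed as a node of $(T_i)_{i\in d}$ --- while a routine bookkeeping (choosing, in effect, an infinite independent set in a graph of finite out-degree on $\omega$) keeps $L_{(T_i)_{i\in d}}$ infinite. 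For the inductive step, fix a root-tuple $t=(t_0,\dots,t_{d-1})\in\prod_{i\in d}U_i(n)$; then $\mathcal{G}(t)$ is $\beta$-uniform on the $d\cdot b$-sequence $(U_i)_{i\in d}(t)$ for some $\beta<\alpha$, and $\lambda_t((X_k)_{k\in d\cdot b})=\lambda(t^{\frown}(X_k)_{k\in d\cdot b})$ still misses $\bigcup_k X_k$. Applying Lemma $5$ to $\mathcal{G}(t)$ to split it according to whether the value of $\lambda_t$ lies in $\bigcup_{i\in d}(U_i[t_i]\setminus\{t_i\})$ or not, one is in one of two cases: the first is handled by the inductive hypothesis of the present lemma, the second by Lemma $6$ (the ``outside'' values lying, by the organization of the fusion, in a part of $\bigcup_{i\in d}U_i$ disjoint from the subtree currently being built above $t$). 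Running this over all root-tuples along a fusion --- exactly as in the proof of Lemma $6$, and using canonicity of the lower-rank $\lambda_t$ to also discard the finitely many ``exceptional'' root-type nodes committed at each step --- yields the desired $(T_i)_{i\in d}$.

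The main obstacle is exactly the diagonal nature of the construction. In Lemma $6$ the source trees and the target trees are genuinely disjoint, so deleting a level from both is harmless; here the nodes of $(T_i)_{i\in d}$ that we commit at a given stage are themselves potential $\lambda$-values of members of $\mathcal{G}$ rooted among the already-committed levels, and one has to be sure this never forces a collision. The step I expect to require the most care to write out is the interaction between ``levels we keep in $L_{(T_i)_{i\in d}}$'' and ``levels we must discard because some activated value lives there'': one must verify that the avoidance hypothesis, together with the fact that canonicity pins each value to a level determined by a node-level set whose envelope over $(T_i)_{i\in d}$ can be emptied by deleting one of its levels --- a level which always lies strictly above the current stage --- guarantees that only finitely many new constraints appear at each stage and that $L_{(T_i)_{i\in d}}$ remains infinite, so that the fusion terminates in an infinite strong subtree.
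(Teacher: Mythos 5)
Your overall architecture matches the paper's: canonize $\lambda$ via the inductive hypothesis of Theorem $8$, induct on the rank of $\mathcal{G}$, build $(T_i)_{i\in d}$ by fusion, and in the inductive step reduce to $\lambda_t$ on $\mathcal{G}(t)$, handling values inside the subtrees above the chosen roots by the lemma's own inductive hypothesis and values in the ``other'' subtrees by Lemma $6$. The problem is in the core mechanism of the fusion (already at rank $1$), where your proposal has a genuine gap. You delete from the future level pool ``the levels on which the $\lambda$-values activated at that stage sit,'' and you justify the consistency of this with the already-built part by the claim that, ``by the avoidance hypothesis, none of those values has already been committed as a node of $(T_i)_{i\in d}$.'' That claim does not follow: the hypothesis $\lambda(X_0,\dots,X_{d-1})\notin\bigcup_{i\in d}X_i$ is local to the member $(X_i)_{i\in d}$, and says nothing about the value avoiding \emph{other} nodes of the committed part of $(T_i)_{i\in d}$ --- in particular it does not prevent an activated value from coinciding with a node committed at an earlier stage, or with a different node committed at the very same level. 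Since such a value sits on a level that is already in $L_{(T_i)_{i\in d}}$, your deletion device cannot remove it, and the construction can produce a collision. Symmetrically, a member whose determining node-level data only becomes committed at a later stage may have its value at an already-committed node; deleting ``the level where the value sits'' is again impossible then, and your parenthetical remedy (``the envelope \dots can be emptied by deleting one of its levels --- a level which always lies strictly above the current stage'') is exactly the unproved point: the node-level set of such a member need not have any level above the current stage.

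The paper closes both directions differently, and this is where the two hypotheses (avoidance and canonicity) are actually spent. At each stage it first picks the new level so that every committed node has more than $b^{n\cdot d}$ successors available, and then \emph{chooses} the new nodes so that the values of all members supported in the (newly extended) committed part miss the newly added nodes; the avoidance hypothesis is what guarantees a value is never forced to equal one of the member's own nodes, leaving enough freedom for this counting argument. Second, for every newly committed node $t$ it considers $\lambda^{-1}(t)$, which by canonicity is governed by a single node-level set up to translation, and removes one of \emph{its} levels (not the level of the value $t$) from the future pool, so that the corresponding envelope over the tree being built is empty and no later member of $\mathcal{G}\upharpoonright(T_i)_{i\in d}$ can map to $t$. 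Your proposal omits the successor-choice/counting step entirely and deletes the wrong levels, so as written the fusion does not yield $\lambda(\mathcal{G}\upharpoonright (T_i)_{i\in d})\cap(\bigcup_{i\in d}T_i)=\emptyset$. (The preliminary reduction you suggest --- arranging by Theorem $4$ and Lemma $5$ that each value lies off every level of its envelope --- is also not justified and is not needed in the paper's argument.)
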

       \begin{proof} We give a proof by induction on the rank of $\mathcal{G}$. For a $0$-uniform family the assertion of the lemma is vacuously true. Let $\mathcal{G}$ be a $1$-uniform family and $\lambda:\mathcal{G}\to (U_i)_{i\in d}$ be a mapping with the property that $\lambda((X_i)_{i\in d})\notin \bigcup_{i\in d}X_i$. By the inductive hypothesis of Theorem $8$ we can assume that $\lambda$ is canonical i.e. there exists a non empty family $\mathcal{T}$ of node-level sets, which gives rise to a uniform family $\mathcal{F}(\mathcal{G})$ on $(U_i)_{i\in d}$.   
     
     Pick now $(X^1_i)_{i\in d}\in \mathcal{S}_1((U_i)_{i\in d})$ and for every $t\in \bigcup_{i\in d}X^1_i$ consider the level set, if non empty, $L_{\lambda^{-1}(t)}$. Then subtract for each $t\in \bigcup_{i\in d} X^1_i$ the level $L_{\lambda^{-1}(t)}$ from $L_{(U_i)_{i\in d}}$ so that a resulting strong subtree $(T^1_i)_{i\in d}$ of $(U_i)_{i\in d}$ with $(X^1_i)_{i\in d}\sqsubseteq (T^1_i)_{i\in d}$, has the property that for any $(X'_i)_{i\in d}\in \mathcal{S}_1((T^1_i)_{i\in d})$, $\lambda((X'_i)_{i\in d})\notin \bigcup_{i\in d}X^1_i$. Set $(T_i)_{i\in d}\upharpoonright 1=(T^1_i)_{i\in d}\upharpoonright 1=(X^1)_{i\in d}$. Suppose we have constructed $(T_i)_{i\in d}\upharpoonright n=(T^n_i)_{i\in d}\upharpoonright n=(X^n_i)_{i\in d}$ and we have to decide $(T_i)_{i\in d}\upharpoonright (n+1)$.

    Let $\{ t_0,\dots, t_{b^{n-1}-1}\}$ be a one-to-one enumeration of the nodes $\bigcup_{i\in d} T^n_i(n-1)$. Pick $m \in L_{(T^n_i)_{i\in d}}$ such that any $t\in \{ t_0,\dots, t_{b^{n-1}-1}\}$ has more than $b^{n\cdot d}$ successors on $\bigcup_{i\in d} T^n_i(m)$. Choose successors of $\{ t_0,\dots, t_{b^{n-1}-1}\}$ on $\cup_{i\in d}T^n_i(m)$ so that the resulting strong subtree $(X^{n+1}_i)_{i\in d}$, of length $n+1$, where $(X^{n+1}_i)_{i\in d}\sqsupseteq (X^n_i)_{i\in d}$ has the following property: $\lambda((Z_i)_{i\in d})\notin \cup_{i\in d}X^{n+1}_i(n)$ for any $(Z_i)_{i\in d}\in \mathcal{S}_1((X^{n+1}_i)_{i\in d})$. Consider the strong subtree $(T^n_i[X^{n+1}_i])_{i\in d}$. Now for any $t\in \cup_{i\in d}X^{n+1}_i(n)$ subtract the level $L_{\lambda^{-1}(t)}$ from the level set $L_{(T^n_i[X^{n+1}_i])_{i\in d}}$. Let $(T^{n+1}_i)_{i\in d}$ be a resulting strong subtree with $(X^{n+1}_i)_{i\in d}\sqsubseteq (T^{n+1}_i)_{i\in d}$. For every $(Z_i)_{i\in d}\in \mathcal{S}_1((T^{n+1}_i)_{i\in d})$ we have that $\lambda ((Z_i)_{i\in d})\notin \bigcup_{i\in d}X^{n+1}_i$. Set $$(T_i)_{i\in d}\upharpoonright n+1=(T^{n+1}_i)_{i\in d}\upharpoonright n+1=(X^{n+1}_i)_{i\in d}.$$
    
      Let $(T_i)_{i\in d}$ be such that $(T_i)_{i\in d}\upharpoonright n=(T^{n}_i)_{i\in d}$ for all $n\in \omega$. We claim that it satisfies the conclusions of our lemma. Suppose that $(X_i)_{i\in d}\in \mathcal{S}_1((T_i)_{i\in d})$ and $\lambda((X_i)_{i\in d})=t \in \bigcup_{i\in d} T_i$ with $|t|=k$. By our construction we have that $\lambda((X_i)_{i\in d})\notin \bigcup_{i\in d}T^{k+1}_i\upharpoonright k+1=\bigcup_{i\in d}T_i\upharpoonright k+1$, a contradiction.

      Assume now the lemma holds for any $\beta$-uniform family, $\beta<\alpha$ and consider an $\alpha$-uniform family $\mathcal{G}$ on $(U_i)_{i\in d}$. Pick $t=(t_0,\dots, t_{d-1}) \in \prod_{i\in d} U_i(n)$, for some $n\in \omega$. By definition $\mathcal{G}(t)$ is a $\beta$-uniform family on $(U_i)_{i\in d}(t)$. Apply our assumption to the canonical mapping $\lambda_t: \mathcal{G}(t)\to \bigcup_{i\in d} U_i(t_i)$, defined by $\lambda_t((X_m)_{m\in d\cdot b})=\lambda(t^\frown (X_m)_{m\in d\cdot b})$, to get strong subtrees $(T^1_m)_{m\in d\cdot b} \in \mathcal{S}_{\infty}((U_i(t_i))_{i\in d})$ such that for any $(X_m)_{m\in d\cdot b}\in \mathcal{G}(t)$ one has $\lambda_t ((X_m)_{m\in d\cdot b})\notin \bigcup_{m\in d\cdot b} T^1_m$. We can also assume, as in Lemma $6$ above, that $t=(t_0,\dots ,t_{d-1}) \cap \lambda_t(\mathcal{G}(t)\upharpoonright ( T^1_m)_{m\in d\cdot b})=\emptyset$ since $\lambda_t$ is a coloring on a uniform family or rank $\beta<\alpha$. Set $$(T^2_i)_{i\in d}=t^\frown ( T^1_m)_{m\in d\cdot b}\text{ and }(T_i)_{i\in d}\upharpoonright 2=(T^2_i)_{i\in d}\upharpoonright 2.$$

      Suppose we have constructed $(T_i)_{i\in d}\upharpoonright n=(T^n_i)_{i\in d}\upharpoonright n$ and we have to decide $(T_i)_{i\in d}\upharpoonright (n+1)=(T^{n+1}_i)_{i\in d}\upharpoonright (n+1)$.
     Let $\{r_0,\dots, r_{d\cdot b^{n-1} -1} \}$ be a one-to-one enumeration of the terminal nodes of $(T^n_i)_{i\in d}\upharpoonright n$. Let $r=(r_{k_i})_{i\in d}$, where for all $i\in d$, $r_{k_i}\in T^n_i$. Let also $w=d\cdot b^{n-1}$, $w_r=\{j \in d\cdot b^{n-1}: r_j\in r\}$ and $w_r^{c}=w\setminus w_r$. Consider the mappings $\lambda_r: \mathcal{G}(r)\upharpoonright (T^n_i)_{i\in d}(r)  \to \cup_{j\in w_r^{c}} T^n_i(r_j) $, where $r_j\in T^n_i$, defined by $\lambda_r((X_k)_{k\in w_r })=\lambda(r ^\frown(X_k)_{k\in w_r })$. By the inductive hypothesis we assume that $\lambda_r( \mathcal{G}(r)\upharpoonright (T^n_i)_{i\in d}(r))\bigcap (\cup(T^n_i)_{i\in d}(r))=\emptyset$. Now by Lemma $6$ we get strong subtrees $(T'^n_j)_{j\in w_r}$ of $(T^n_i[r_{k_i}])_{i\in d}$ and $(T'^n_j)_{j\in w^c_r}$ of $(T^n_i[r_j])_{j\in w_r^c}$, all with the same levels sets, that satisfy $$\lambda_r((T'^n_j)_{j\in w_r})\bigcap (\bigcup_{j\in w^c_r}T'^n_j)=\emptyset.$$ At this point we can assume that $\{ r_j: j\in w_r \}\bigcap \lambda_r((T^n_i)_{i\in d}(r))=\emptyset$ by the fact that $\lambda_r$ is a canonical coloring restricted on a uniform family of rank $\beta< \alpha$, as we did in Lemma $6$ above. Repeat this last step for all possible such a $r$ to get strong subtrees $(T'^{n+1}_f)_{f\in d\cdot b^n}\in \mathcal{S}_{\infty}((T'^n_j)_{j\in w})$. Set 
        $$(T^{n+1}_i)_{i\in d}=((T^n_i)_{i\in d}\upharpoonright n)^\frown (T'^{n+1}_f)_{f\in d\cdot b^n}\text{ and }(T_i)_{i\in d}\upharpoonright (n+1)=(T^{n+1}_i)_{i\in d}\upharpoonright (n+1).$$
        
        Let $(T_i)_{i\in d}$ be such that $(T_i)_{i\in d}\upharpoonright n=(T^{n}_i)_{i\in d}$ for all $n\in \omega$. It satisfies the conclusions of our lemma with an argument identical with that in the case of rank equal to one.

    \end{proof}
      
  We would like to establish a result that will give us the possibility of comparing two uniform families and two canonical colorings defined on them.
    We use Lemma $7$ to prove the following:
   
   \begin{lemma} 
   Let $\mathcal{T}_1$ and $\mathcal{T}_2$ be two families of node-level sets so that they generate two uniform families $\mathcal{F}(\mathcal{G}_1)$ and $\mathcal{F}(\mathcal{G}_2)$, on $(U_i)_{i\in d}$, by taking the union of all strong subtree envelopes of all members of $\mathcal{T}_1$ and $\mathcal{T}_2$ respectively . Let $c'_1$ a mapping on $\mathcal{F}(\mathcal{G}_1)$ with the property that $c'_1((X^1_i)_{i\in d})=c'_1((X^2_i)_{i\in d})$ if and only if $(X^1_i)_{i\in d}:(N^1_i, L^1_i)_{i\in d}=(X^2_i)_{i\in d}:(N^1_i, L^1_i)_{i\in d}$ for $(N^1_i, L^1_i)_{i\in d}\in \mathcal{T}_1$. Let also $c_2$ a mapping on $\mathcal{F}(\mathcal{G}_2)$ such that $c_2((Y^1_i)_{i\in d})=c_2((Y^2_i)_{i\in d})$ if and only if $(Y^1_i)_{i\in d}:(N^2_i, L^2_i)_{i\in d}=(Y^2_i)_{i\in d}:(N^2_i, L^2_i)_{i\in d}$ for $(N^2_i, L^2_i)_{i\in d}\in \mathcal{T}_2$. There exists $(T_i)_{i\in d}\in \mathcal{S}_{\infty}((U_i)_{i\in d})$ such that one of the following two statements holds.
   \begin{enumerate}
   \item{} $\mathcal{F}(\mathcal{G}_1)\upharpoonright (T_i)_{i\in d}=\mathcal{F}(\mathcal{G}_2)\upharpoonright (T_i)_{i\in d}$ and $c'_1((X_i)_{i\in d})=c'_2((X_i)_{i\in d})$  for every $(X_i)_{i\in d}\in \mathcal{F}(\mathcal{G}_1)\upharpoonright (T_i)_{i\in d}= \mathcal{F}(\mathcal{G}_2)\upharpoonright (T_i)_{i\in d}$.
   \item{} The image of $c'_1$ on $\mathcal{F}(\mathcal{G}_1)\upharpoonright (T_i)_{i\in d}$ and the image of $c'_2$ on $\mathcal{F}(\mathcal{G}_2)\upharpoonright (T_i)_{i\in d}$ are disjoint.
  
   \end{enumerate}
   \end{lemma}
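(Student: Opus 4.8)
The plan is to argue by induction on $\alpha=\max\{\operatorname{rank}(\mathcal{F}(\mathcal{G}_1)),\operatorname{rank}(\mathcal{F}(\mathcal{G}_2))\}$, following the scheme of the analogous Lemma $2$ of Pudl\'ak and R\"odl recalled in Section $2$, with Lemma $7$ taking over the role of their lemma on maps avoiding their own arguments, and with the pigeonhole principle for fixed-height strong subtrees (Lemma $5$, a consequence of Theorem $6$) supplying the homogenisations. Throughout I use that $\mathcal{F}(\mathcal{G}_j)\upharpoonright(T_i)_{i\in d}$ is again uniform of the same rank (Lemma $4$) and that the restriction of $c'_j$ to it is again canonical, through the restrictions of the node-level sets in $\mathcal{T}_j$ to $(T_i)_{i\in d}$.

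\emph{Base case.} If one of the two families, say $\mathcal{F}(\mathcal{G}_1)$, equals $\{\emptyset\}$, then $c'_1$ is constant with value $r_0$. If $\mathcal{F}(\mathcal{G}_2)=\{\emptyset\}$ as well, then $(T_i)_{i\in d}=(U_i)_{i\in d}$ already gives $(1)$ or $(2)$ according to whether $c'_2\equiv r_0$. If $\operatorname{rank}(\mathcal{F}(\mathcal{G}_2))\geq 1$, then since $\phi_2$ is one-to-one there is at most one $(\mathbf N,\mathbf L)\in\mathcal{T}_2$ with $\phi_2((\mathbf N,\mathbf L))=r_0$, and $(\mathbf N,\mathbf L)\neq(\emptyset,\emptyset)$ because $\emptyset\notin\mathcal{F}(\mathcal{G}_2)$. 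As translations are horizontal, every member of $\mathcal{F}(\mathcal{G}_2)$ of colour $r_0$ occupies the same non-empty finite set of levels of $(U_i)_{i\in d}$, namely those spanned by $\mathbf N^\wedge$ together with $\mathbf L$; deleting one such level from every $U_i$ yields $(T_i)_{i\in d}$ over which $(\mathbf N,\mathbf L)$ is no longer realised, so $r_0\notin c'_2(\mathcal{F}(\mathcal{G}_2)\upharpoonright(T_i)_{i\in d})$ and alternative $(2)$ holds. It is here that the uniformity of $\mathcal{F}(\mathcal{G}_2)$ enters, forcing $(\mathbf N,\mathbf L)$ to have a shape for which such a level exists and can be removed without destroying infiniteness.

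\emph{Inductive step: both ranks at least $1$.} Fix a node-tuple $t=(t_0,\dots,t_{d-1})\in\prod_{i\in d}U_i(n)$. Then (coordinatewise, as in Definition $9$) $\mathcal{F}(\mathcal{G}_1)(t)$ and $\mathcal{F}(\mathcal{G}_2)(t)$ are uniform families of rank $<\alpha$ on $(U_i)_{i\in d}(t)$, again generated by node-level sets, and the induced maps $c'_{j,t}(\cdot):=c'_j(t^\frown\cdot)$ are canonical through the restrictions of those node-level sets below $t$ --- two members rooted at $t$ share their root, so no translation ambiguity above level $|t|$ arises, and the Nash--Williams property of uniform families (Lemma $3$) keeps the node part of a colour inside the member. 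Applying the inductive hypothesis to each such pair and running the fusion construction already used in this section for the functor $\mathcal{F}$, I obtain $(T_i)_{i\in d}$ below whose every node-tuple $t$ one of the alternatives $(1)$, $(2)$ holds for $\mathcal{F}(\mathcal{G}_1)(t),\mathcal{F}(\mathcal{G}_2)(t)$ over $(T_i)_{i\in d}(t)$. Since both alternatives persist when passing to a smaller strong subtree, I colour each node-tuple of $(T_i)_{i\in d}$ by which one holds and apply the pigeonhole principle (Lemma $5$) to pass to a further strong subtree, still called $(T_i)_{i\in d}$, where this is uniform. If it is uniformly $(1)$, then any member of $\mathcal{F}(\mathcal{G}_1)\upharpoonright(T_i)_{i\in d}$ written through its root as $t^\frown(Z_i)$ has $(Z_i)\in\mathcal{F}(\mathcal{G}_1)(t)\upharpoonright(T_i)_{i\in d}=\mathcal{F}(\mathcal{G}_2)(t)\upharpoonright(T_i)_{i\in d}$, whence $\mathcal{F}(\mathcal{G}_1)\upharpoonright(T_i)_{i\in d}=\mathcal{F}(\mathcal{G}_2)\upharpoonright(T_i)_{i\in d}$ and, because the two agree on the part beyond each common root, $c'_1=c'_2$ on this common family: alternative $(1)$.

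\emph{The remaining case and the main obstacle.} If the alternative is uniformly $(2)$, then colour coincidences between members sharing a root are already ruled out, and the only remaining obstruction to disjoint images is a colour shared by an $\mathcal{F}(\mathcal{G}_1)$-member rooted at some $t$ and an $\mathcal{F}(\mathcal{G}_2)$-member rooted at some $t'\neq t$ --- equivalently, a pair $\bigl((\mathbf N,\mathbf L),(\mathbf N',\mathbf L')\bigr)\in\mathcal{T}_1\times\mathcal{T}_2$ identified by $\phi_1,\phi_2$ and simultaneously realised at distinct roots. Such pairs form a countable list, and I would eliminate them one at a time along a single fusion: a pair whose level spans differ is killed by deleting a separating level as in the base case, while a pair with equal spans but different node parts is handled by applying Lemma $7$ to $\mathcal{F}(\mathcal{G}_1)$ with $\lambda$ sending a member to a node of the offending $\mathcal{F}(\mathcal{G}_2)$-realiser that lies outside it, so that in the limit no such pair survives and alternative $(2)$ holds. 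This last step is the heart of the matter and the main obstacle: there can be infinitely many potential cross-root coincidences, all of which must be removed by one strong subtree that is still infinite --- precisely the situation Lemma $7$ is built for --- and since $c'_1,c'_2$ are not injective, ``the offending realiser'' and ``a node outside it'' are only pinned down up to translation, so one must verify that these choices can be organised into a coherent fusion. Checking this, together with the canonicity of the restricted colourings used in the inductive step, is where the real work lies.
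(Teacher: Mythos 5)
Your proposal does not follow the paper's route (the paper first partitions $\mathcal{F}(\mathcal{G}_1)$ into the members that also lie in $\mathcal{F}(\mathcal{G}_2)$ and carry the same colour versus the rest, and applies the Ramsey property of uniform families, Lemma~5, so that either alternative~(1) holds outright or every member of $\mathcal{F}(\mathcal{G}_1)\upharpoonright (T^0_i)_{i\in d}$ is ``bad''); your root-by-root induction with a pigeonhole homogenisation is a conceivable substitute for that reduction, and the ``uniformly (1)'' branch of your argument is plausible, modulo the incorrect aside that no translation ambiguity arises among members sharing a root-tuple $t$ (translated node sets can sit under different immediate successors of $t$, so the localized colourings are still only canonical up to translation --- harmless for the inductive statement, but not for the reason you give).

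The genuine gap is in the ``uniformly (2)'' branch, which is where the entire content of the lemma lies, and which you explicitly defer (``where the real work lies''). Your plan --- enumerate the countably many cross-root colour coincidences and kill them one at a time along a fusion, feeding Lemma~7 a map $\lambda$ that sends a member to ``a node of the offending $\mathcal{F}(\mathcal{G}_2)$-realiser that lies outside it'' --- fails as stated: for Lemma~7 to make the offending envelope empty on the limit subtree, the node assigned by $\lambda$ must lie in \emph{every} member of that envelope, i.e.\ it must belong to the (translated) node set $N^2$ or be forced by the essential initial part $((Y_i)_{i\in d})^{in}$, or else one must delete a level of $L^2$; a node of one particular realiser that merely lies outside $(X_i)_{i\in d}$ can be avoided by other realisers of the same colour, so the colour coincidence survives. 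Arranging that such a mandatory node or level can always be chosen is exactly what the paper's auxiliary Lemma~9 (no coincidences with $((X_i)_{i\in d})^{in}=((Y_i)_{i\in d})^{in}$) and Lemma~10 (no coincidences with nested level sets $L_1\sqsubseteq L_2$) are for, followed by the case analysis on $\bigcup_i N^1_i$, $\bigcup_i N^2_i$, $L^1_{in}$, $L^2_{in}$ that defines $\lambda_1,\lambda_2$ globally on the two uniform families so that two applications of Lemma~7 finish the proof. None of this machinery, nor an alternative for it, appears in your sketch, and the pair-by-pair fusion you propose also leaves open how earlier eliminations are preserved while infinitely many later ones are performed; so the proposal as written does not establish alternative~(2).
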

   \begin{proof}
        
       Partition $\mathcal{F}(\mathcal{G}_1)$ into two pieces $\mathcal{S}_{1,1}$ and $\mathcal{S}_{1,2}$ as follows: $(X_i)_{i\in d}\in \mathcal{S}_{1,1}$ if and only if $(X_i)_{i\in d}\in \mathcal{F}(\mathcal{G}_2), \, c'_1((X_i)_{i\in d})=c'_2((X_i)_{i\in d})$ and $(X_i)_{i\in d}\in \mathcal{S}_{1,2}$ if and only if $ (X_i)_{i\in d} \notin \mathcal{S}_{1,1}$.  Since $\mathcal{F}(\mathcal{G}_1)$ is Ramsey, we get $(T^0_i)_{i\in d}\in \mathcal{S}_{\infty}((U_i)_{i\in d})$ such that either $\mathcal{F}(\mathcal{G}_1)\upharpoonright (T^0_i)_{i\in d}\subseteq \mathcal{S}_{1,1}$, in which case we have the first statement holding, or $\mathcal{F}(\mathcal{G}_1)\upharpoonright (T^0_i)_{i\in d}\subseteq \mathcal{S}_{1,2}$, in which case we have to show that the second statement is on hold.\\ Therefore we assume that $\mathcal{F}(\mathcal{G}_1)\upharpoonright (T^0_i)_{i\in d}\subseteq \mathcal{S}_{1,2}$ and we show that the second statement is true.
      Note that for $(X_i)_{i\in d}\in\mathcal{F}( \mathcal{G}_1)\upharpoonright (T^0_i)_{i\in d}$ to be a member of $\mathcal{S}_{1,2}$ it is either the case that $(X_i)_{i\in d}\notin \mathcal{F}(\mathcal{G}_2)\upharpoonright (T^0_i)_{i\in d}$ or if $(X_i)_{i\in d}\in \mathcal{F}(\mathcal{G}_2)\upharpoonright (T^0_i)_{i\in d}$ then one must have $c'_1 ((X_i)_{i\in d})\neq c'_2((X_i)_{i\in d})$. \\
      
       Let $(X_i)_{i\in d}\in\mathcal{F}(\mathcal{ G}_1)\upharpoonright (T^0_i)_{i\in d}$ and pick, if it exists, a $(Y_i)_{i\in d}\in \mathcal{F}(\mathcal{G}_2)\upharpoonright (T^0_i)_{i\in d}$ such that $$c'_1((X_i)_{i\in d})=c'_2((Y_i)_{i\in d})$$ This would imply that $(X_i)_{i\in d}\neq (Y_i)_{i\in d}$ and that will be true not only for $(X_i)_{i\in d}$, $(Y_i)_{i\in d}$ but for all members of the strong subtree envelope of $(N^1_i,L^1_i)_{i\in d}\in \mathcal{T}_1$, $(N^2_i,L^2_i)_{i\in d}\in \mathcal{T}_2$, where $(X_i)_{i\in d}\in \mathcal{C}^{(T^0_i)_{i\in d}}_{(N^1_i,L^1_i)_{i\in d}}$ and $(Y_i)_{i\in d}\in \mathcal{C}^{(T^0_i)_{i\in d}}_{(N^2_i,L^2_i)_{i\in d}}$. To see this observe that if we had $(X'_i)_{i\in d}=(Y'_i)_{i\in d}$ for some $(X'_i)_{i\in d}\in \mathcal{C}^{(T^0_i)_{i\in d}}_{(N^1_i,L^1_i)_{i\in d}}$ and some $(Y'_i)_{i\in d}\in \mathcal{C}^{(T^0_i)_{i\in d}}_{(N^2_i,L^2_i)_{i\in d}}$ i.e. $(X'_i)_{i\in d} \in \mathcal{F}(\mathcal{G}_2)\upharpoonright (T^0_i)_{i\in d}$, then we would get a contradiction because $c'_1((X'_i)_{i\in d})=c'_1((X_i)_{i\in d})=c'_2((Y_i)_{i\in d})=c'_2((Y'_i)_{i\in d})=c'_2((X'_i)_{i\in d})$ and we have assumed that $\mathcal{F}(\mathcal{G}_1)\upharpoonright (T^0_i)_{i\in d}\subseteq \mathcal{S}_{1,2}$ .\\

      To proceed further, we need the following lemma:
      
      \begin{lemma} In the above context, i.e. $\mathcal{F}(\mathcal{G}_1)\upharpoonright (T^0_i)_{i\in d}\subseteq \mathcal{S}_{1,2}$, by passing to a strong subtree if necessarily, we can assume that there are not $$(X_i)_{i\in d}\in  \mathcal{F}(\mathcal{G}_1)\upharpoonright (T^0_i)_{i\in d}\text{ and }(Y_i)_{i\in d}\in \mathcal{F}(\mathcal{G}_2)\upharpoonright (T^0_i)_{i\in d}$$ such that $c'_1((X_i)_{i\in d})=c'_2((Y_i)_{i\in d})$ and $((X_i)_{i\in d})^{in} =( (Y_i)_{i\in d})^{in}$.
      \end{lemma}
      
      \begin{proof} For simplicity reasons in the proof we write $\mathcal{F}(\mathcal{G}_j)$, $j\in \{1,2\}$ instead of $\mathcal{F}(\mathcal{G}_j)\upharpoonright (T^0_i)_{i\in d}$.
      Suppose now that $c'_1((X_i)_{i\in d})=c'_2((Y_i)_{i\in d})$ for $(X_i)_{i\in d}\in  \mathcal{F}(\mathcal{G}_1)$, $(Y_i)_{i\in d}\in \mathcal{F}(\mathcal{G}_2)$ and $L_{(X_i)_{i\in d}}=L_{(Y_i)_{i\in d}}$. If one has $((X_i)_{i\in d})^{in}=((Y_i)_{i\in d})^{in}=(Z_i)_{i\in d}$, this would imply that there exist $(X'_i)_{i\in d}\in \mathcal{C}^{(T^0_i)_{i\in d}}_{(Z_i,L^1_i)_{i\in d}}$ and $(Y'_i)_{i\in d}\in \mathcal{C}^{(T^0_i)_{i\in d}}_{(Z_i,L^2_i)_{i\in d}}$ so that $X'_i=Y'_i$ for all $i\in d$ and $c'_1((X'_i)_{i\in d})=c'_2((Y'_i)_{i\in d}$ a contradiction.  From now on we consider the case of $L_{(X_i)_{i\in d}}\neq L_{(Y_i)_{i\in d}}$.

   The proof is by induction on the  countable ordinals $\alpha, \beta$ the ranks of $\mathcal{F}(\mathcal{G}_1)$ and $\mathcal{F}(\mathcal{G}_2)$ respectively. Let both $\alpha, \beta$ be finite.  Let $(X_i)_{i\in d}\in \mathcal{F}(\mathcal{G}_1)$, $(Y_i)_{i\in d}\in \mathcal{F}(\mathcal{G}_2)$ and consider $((X_i)_{i\in d})^{in}$, $((Y_i)_{i\in d})^{in}$. Let $n<|L_{(X_i)_{i\in d}}|=\alpha$ be the length of $((X_i)_{i\in d})^{in}$ and $k<|L_{(Y_i)_{i\in d}}|=\beta$ the length of $((Y_i)_{i\in d})^{in}$. Assume that $k=n$.
     We distinguish the following three cases:
        
     $ \bf{Case\, 1:}$ 
         Let both sets $L_{(X_i)_{i\in d}}\setminus L_{((X_i)_{i\in d})^{in}}$ and $L_{(Y_i)_{i\in d}} \setminus L_{((Y_i)_{i\in d})^{in}}$ be non empty. 
         Pick a finite strong subtree $(Z^1_i)_{i\in d}$ of $(T^0_i)_{i\in d}$ with height $n$. 
          Then by applying Lemma $2$ on the $\alpha-n$, $\beta-n$ uniform families on $L_{(T^0_i[Z^1_i])_{i\in d}}\setminus n$, and the mappings $c''_j, j\in \{1,2\}$, defined by $c''_j(L_j)=c'_j(\mathcal{C}^{(T^0_i)_{i\in d}}_{(Z^1_i)_{i\in d},L_j})$ we get $(T^1_i)_{i\in d}\in \mathcal{S}_{\infty}((T^1_i)_{i\in d})$, where $(T^1_i)_{i\in d}\upharpoonright n=(Z^1_i)_{i\in d}$. $(T^1_i)_{i\in d}$ satisfies the second alternative of Lemma $2$, because we have assumed that $\mathcal{F}(\mathcal{G}_1)\upharpoonright (T^0_i)_{i\in d}\subseteq \mathcal{S}_{1,2}$.
   On $(T^1_i)_{i\in d}$ for $(X_i)_{i\in d}\in \mathcal{F}(\mathcal{G}_1)$ and $(Y_i)_{i\in d}\in \mathcal{F}(\mathcal{G}_2)$ we have that $$\text{If }((X_i)_{i\in d})^{in}=((Y_i)_{i\in d})^{in} =( Z^1_i)_{i\in d}\text{, then }c((X_i)_{i\in d})\neq c((Y_i)_{i\in d}).$$ Set $(T_i)_{i\in d}\upharpoonright n=(T^1_i)_{i\in d}\upharpoonright n=(Z^1_i)_{i\in d}$.
          
          Suppose we have constructed $(T_i)_{i\in d}\upharpoonright (n+m)=(T^m_i)_{i\in d}\upharpoonright (n+m)=(Z^m_i)_{i\in d}$ and we have to decide $(T_i)_{i\in d}\upharpoonright (n+m+1)=(T^{m+1}_i)_{i\in d}\upharpoonright (n+m+1)$.       
          Let now $$(Z^{m}_i)_{i\in d}\sqsubset (Z^{m+1}_i)_{i\in d}\text{ and }(Z^{m+1}_i)_{i\in d}\in\mathcal{S}_{n+m+1}((T^m_i)_{i\in d}).$$ Consider the finite set $ A_{m+1}=\{\, (Z'_i)_{i\in d} \in \mathcal{S}_n((Z^{m+1}_i)_{i\in d})\, \}$. For each $(Z'_i)_{i\in d}\in A_{m+1}$ apply Lemma $2$ on $L_{(T^m_i[Z'_i])_{i\in d}}\setminus n$ and the mappings $c''_j, j\in \{1,2\}$, defined by $c''_j(L_j)=c'_j(\mathcal{C}^{(T^m_i)_{i\in d}}_{(Z'_i)_{i\in d},L_j})$. That gives us $(T'^m_i)_{i\in d}\in \mathcal{S}_{\infty}((T^m_i)_{i\in d})$, where $(T'^m_i)_{i\in d}\upharpoonright (n+m+1)=(Z^{m+1}_i)_{i\in d}$, that satisfies the second alternative of Lemma $2$. On $(T'^{m}_i)_{i\in d}$ for $(X_i)_{i\in d}\in \mathcal{F}(\mathcal{G}_1)$ and $(Y_i)_{i\in d}\in \mathcal{F}(\mathcal{G}_2)$ we have that
          $$\text{If }((X_i)_{i\in d})^{in}=((Y_i)_{i\in d})^{in} =( Z'_i)_{i\in d}\text{, then }c((X_i)_{i\in d})\neq c((Y_i)_{i\in d}).$$ Repeat this step for all the elements of $A_{m+1}$, to get $(T^{m+1}_i)_{i\in d}\in \mathcal{S}_{\infty}((T^m_i)_{i\in d})$ where $T^{m+1}_i\upharpoonright (n+m+1) =Z^{m+1}_i$, for all $i\in d$. On $(T^{m+1}_i)_{i\in d}$ it holds that
          
      $c'_1((X_i)_{i\in d})\neq c'_2((Y_i)_{i\in d})$ for all $(X_i)_{i\in d}\in \mathcal{F}(\mathcal{G}_1)$ and $(Y_i)_{i\in d}\in \mathcal{F}(\mathcal{G}_2)$ with $((X_i)_{i\in d})^{in}=((Y_i)_{i\in d})^{in}=(Z'_i)_{i\in d} \in A_{m+1}$. Set $$(T_i)_{i\in d}\upharpoonright (n+m+1)=(T^{m+1}_i)_{i\in d}\upharpoonright (n+m+1)=(Z^{m+1}_i)_{i\in d}.$$
      
      Let $(T_i)_{i\in d}$ be such that $(T_i)_{i\in d}\upharpoonright n=(T^1_i)_{i\in d}\upharpoonright n=(Z^1_i)_{i\in d}$ and $(T_i)_{i\in d}\upharpoonright n+m=(T^{m}_i)_{i\in d}\upharpoonright n+m$ for all $m\in \omega$. $(T_i)_{i\in d}$ satisfies the conclusions of Lemma $9$, in our case. Suppose not.
        Let $(X_i)_{i\in d}\in \mathcal{F}(\mathcal{G}_1)\upharpoonright (T_i)_{i\in d}$ and $(Y_i)_{i\in d}\in \mathcal{F}(\mathcal{G}_2)\upharpoonright (T_i)_{i\in d}$ with $((X_i)_{i\in d})^{in}=((Y_i)_{i\in d})^{in}=(Z'_i)_{i\in d}\in A_{m'}$ and $c'_1((X_i)_{i\in d})=c'_2((Y_i)_{i\in d})$. But we have that $c'_1((X_i)_{i\in d})\neq c'_2((Y_i)_{i\in d})$ for all $(X_i)_{i\in d}\in \mathcal{F}(\mathcal{G}_1)\upharpoonright (T_i)_{i\in d}$ and $(Y_i)_{i\in d}\in \mathcal{F}(\mathcal{G}_2)\upharpoonright (T_i)_{i\in d}$ with $((X_i)_{i\in d})^{in}=((Y_i)_{i\in d})^{in}=(Z'_i)_{i\in d} \in A_{m'}$, a contradiction.\\

              $\bf{Case\, 2:}$ If now $L_{(Y_i)_{i\in d}}\setminus L_{((Y_i)_{i\in d})^{in}}=\emptyset $.
       Pick a finite strong subtree $(Z^1_i)_{i\in d}$ of $(T^0_i)_{i\in d}$ with height $n$. Let $(X_i)_{i\in d}\in \mathcal{F}(\mathcal{G}_1)$, $(X_i)_{i\in d} \in \mathcal{C}^{(T^0_i)_{i\in d}}_{(N^1_i,L^1_i)_{i\in d}} \subset \mathcal{F}(\mathcal{G}_1)$ with $((X_i)_{i\in d})^{in}=(Z^1_i)_{i\in d}$, and consider the level set $L^X=L_{(X_i)_{i\in d}}\setminus L_{((X_i)_{i\in d})^{in}}\subset L_{(T^0_i[Z^1_i])_{i\in d}}$. If there exists $(Y_i)_{i\in d}$ such that $((Y_i)_{i\in d})^{in}=(Y_i)_{i\in d}=(Z^1_i)_{i\in d}$ and $c'_1((X_i)_{i\in d})=c'_2((Y_i)_{i\in d})$, then subtract a level $l$ from $L_{(T^0_i[Z^1_i])_{i\in d}}$ where $l\in L^{X}$. Let $(T^1_i)_{i\in d}\sqsupseteq (Z^1_i)_{i\in d}$ be a strong subtree of $(T^0_i[Z^1_i])_{i\in d}$ with level set equal to $L_{(T^0_i[Z^1_i])_{i\in d}}\setminus \{l\}$. Then $\mathcal{C}^{(T^1_i)_{i\in d}}_{(N^1_i,L^1_i)_{i\in d}}=\emptyset$. Set $(T_i)_{i\in d}\upharpoonright n= (T^{1}_i)_{i\in d}\upharpoonright n=(Z^1_i)_{i\in d}$. Suppose we have constructed $(T_i)_{i\in d}\upharpoonright m= (T^m_i)_{i\in d}\upharpoonright m= (Z^m_i)_{i\in d}$, $m>n$, and we have to decide $(T_i)_{i\in d}\upharpoonright m+1= (T^{m+1}_i)_{i\in d}\upharpoonright m+1$. Let $(Z^{m+1}_i)_{i\in d}\sqsupset (Z^{m}_i)_{i\in d}$ and $(Z^{m+1}_i)_{i\in d}\in \mathcal{S}_{m+1}((T^m_i)_{i\in d})$. Let $ A=\{\, (Z'_i)_{i\in d} \in \mathcal{S}_n((Z^{m+1}_i)_{i\in d})\, \}$. For each $(Z'_i)_{i\in d}\in A$ if there exists $(Y_i)_{i\in d}\in \mathcal{F}(\mathcal{G}_2)$, $(X_i)_{i\in d}\in \mathcal{F}(\mathcal{G}_1)$ so that $((X_i)_{i\in d})^{in}=(Y_i)_{i\in d}=(Z'_i)_{i\in d}$ and $c'_1((X_i)_{i\in d})=c'_2((Y_i)_{i\in d})$, then subtract a level $l'$ from $L_{(T^m_i[Z'_i])_{i\in d}}$ where $l'\in L^X=L_{(X_i)_{i\in d}}\setminus L_{((X_i)_{i\in d})^{in}}$.   
       Repeat this step for every element of $A$, to get $(T^{m+1}_i)_{i\in d}\in \mathcal{S}_{\infty}((T^m_i)_{i\in d})$ and $T^{m+1}_i\upharpoonright (m+1) =Z^{m+1}_i$ for all $i\in d$. We have that $c'_1((X_i)_{i\in d})\neq c'_2((Y_i)_{i\in d})$ for all $(X_i)_{i\in d}\in  \mathcal{F}(\mathcal{G}_1)\upharpoonright (T^m_i)_{i\in d}$, $(Y_i)_{i\in d}\in  \mathcal{F}(\mathcal{G}_2)\upharpoonright (T^m_i)_{i\in d}$ with $((X_i)_{i\in d})^{in}=((Y_i)_{i\in d}) =(Z'_i)_{i\in d} \in A$. Set $$(T_i)_{i\in d}\upharpoonright (m+1)= (T^{m+1}_i)_{i\in d}\upharpoonright (m+1)= (Z^{m+1}_i)_{i\in d}.$$
       
        Let $(T_i)_{i\in d}$ be such that $(T_i)_{i\in d}\upharpoonright n= (T^{1}_i)_{i\in d}\upharpoonright n=(Z^1_i)_{i\in d}$ and $(T_i)_{i\in d}\upharpoonright m=(T^{m}_i)_{i\in d}\upharpoonright m$ for all $n<m\in \omega$. We claim that it satisfies the conclusions of our lemma in this case. Suppose not.
           Let $(X_i)_{i\in d}\in \mathcal{F}(\mathcal{G}_1)\upharpoonright (T_i)_{i\in d}$,
           $(Y_i)_{i\in d}\in \mathcal{F}(\mathcal{G}_2)\upharpoonright (T_i)_{i\in d}$ with $((X_i)_{i\in d})^{in}=(Y_i)_{i\in d}=(Z''_i)_{i\in d}\in \{\, (Z'_i)_{i\in d} \in \mathcal{S}_n((Z^{m'}_i)_{i\in d})\, \}$ and $c'_1((X_i)_{i\in d})=c'_2((Y_i)_{i\in d})$. By definition we have that $(T_i)_{i\in d}\upharpoonright m'= (T^{m'}_i)_{i\in d}\upharpoonright m'=(Z^{m'}_i)_{i\in d}$. For all $(X_i)_{i\in d}\in  \mathcal{F}(\mathcal{G}_1)\upharpoonright (T_i)_{i\in d}$, $(Y_i)_{i\in d}\in  \mathcal{F}(\mathcal{G}_2)\upharpoonright (T_i)_{i\in d}$ with $((X_i)_{i\in d})^{in}=((Y_i)_{i\in d}) =(Z''_i)_{i\in d}\in \{\, (Z'_i)_{i\in d} \in \mathcal{S}_n((Z^{m'}_i)_{i\in d})\, \}$, it holds that $c'_1((X_i)_{i\in d})\neq c'_2((Y_i)_{i\in d})$, a contradiction.\\

     $\bf{Case\, 3:}$ If $L_{(X_i)_{i\in d}}\setminus L_{((X_i)_{i\in d})^{in}}=\emptyset$ and $L_{(Y_i)_{i\in d}}\setminus L_{((Y_i)_{i\in d})^{in}}=\emptyset$. In the beginning of our lemma we have assumed that $L_{(X_i)_{i\in d}}\neq L_{(Y_i)_{i\in d}}$. The assumption of our case implies that $((X_i)_{i\in d})^{in}=(X_i)_{i\in d}, ((Y_i)_{i\in d})^{in}=(Y_i)_{i\in d}$. We cannot have $((X_i)_{i\in d})^{in}=((Y_i)_{i\in d})^{in}$ because it implies that $(X_i)_{i\in d}=(Y_i)_{i\in d}$ contradicting $\mathcal{F}(\mathcal{G}_1)\upharpoonright (T^0_i)_{i\in d}\subseteq \mathcal{S}_{1,2}$ and the assumption that $L_{(X_i)_{i\in d}}\neq L_{(Y_i)_{i\in d}}$.\\
     
     Suppose now that $\alpha$ and $\beta$ are arbitrary and assume that our lemma holds for any $\gamma$-uniform and $\delta$-uniform families, where $\gamma<\alpha$, $\delta<\beta$. Pick a $t=(t_i)_{i\in d}\in \prod_{i\in d} U_i(n)$. Apply the above assumption on the uniform families $\mathcal{F}(\mathcal{G}_1)(t)$ and $\mathcal{F}(\mathcal{G}_2)(t)$ to get strong subtrees $(T^0_p)_{p\in d\cdot b}$ that satisfy the following property: for $(X^t_j)_{j \in d\cdot b}\in \mathcal{F}(\mathcal{G}_1)(t)\upharpoonright (T^0_p)_{p\in d\cdot b}$ and $(Y^t_j)_{j \in d\cdot b}\in  \mathcal{F}(\mathcal{G}_2)(t)\upharpoonright (T^0_p)_{p\in d\cdot b}$ with $(t^\frown(X^t_j)_{j \in d\cdot b})^{in} =(t^\frown(Y^t_j)_{j \in d\cdot b})^{in}$ we have $c'_1(t^\frown(X^t_j)_{j \in d\cdot b})\neq c'_2(t^\frown(Y^t_j)_{j \in d\cdot b})$.
    Let $$(T^1_i)_{i\in d}=t^\frown (T^0_p)_{p\in d\cdot b}\text{ and }(T_i)_{i\in d}\upharpoonright 1=(T^1_i)_{i\in d}\upharpoonright 1=t.$$

    Suppose we have constructed $(T_i)_{i\in d}\upharpoonright n=(T^n_i)_{i\in d}\upharpoonright n$ and we have to decide $(T_i)_{i\in d}\upharpoonright (n+1)$. Let $\{r_0, \dots , r_{(d\cdot b^{n-1})-1} \}$ be the lexicographically increasing enumeration of the set $\bigcup_{i\in d}T^n(n-1)$. Let $r=(r_{k_i})_{i\in d}$ be so that $r_{k_i}\in T^n_i$ for all $i\in d$. Apply once more our assumption to the uniform families $\mathcal{F}(\mathcal{G}_1)(r)\upharpoonright (T^n_i)_{i\in d}$ and $\mathcal{F}(\mathcal{G}_2)(r)\upharpoonright (T^n_i)_{i\in d}$. After considering all possible such a $r$ we get strong subtrees $(T'^{n+1}_i)_{i\in d\cdot b^n}$. Let $(T^{n+1}_i)_{i\in d}=((T^n_i)_{i\in d}\upharpoonright n)^\frown (T'^{n+1}_i)_{i\in d\cdot b^n}$. Set $(T_i)_{i\in d}\upharpoonright (n+1)=(T^{n+1}_i)_{i\in d}\upharpoonright (n+1)$.
    
     Let $(T_i)_{i\in d}$ be such that $(T_i)_{i\in d}\upharpoonright n=(T^{n}_i)_{i\in d}\upharpoonright n$ for all $n \in \omega$. We claim that it satisfies the conclusion of our lemma. Suppose not.
Let $(X_i)_{i\in d}\in \mathcal{F}(\mathcal{G}_1)\upharpoonright (T_i)_{i\in d}$, $(Y_i)_{i\in d}\in \mathcal{F}(\mathcal{G}_2)\upharpoonright (T_i)_{i\in d}$ with $((X_i)_{i\in d})^{in}=((Y_i)_{i\in d})^{in}=(Z'_i)_{i\in d}$ and $c'_1((X_i)_{i\in d})=c'_2((Y_i)_{i\in d})$. Let $t=(t_0, \dots,t_{d-1})$ be the common root of $(X_i)_{i\in d}$ and $(Y_i)_{i\in d}$. By the definition of $(T_i)_{i\in d}$, for $(X^t_j)_{j \in d\cdot b}\in \mathcal{F}(\mathcal{G}_1)(t)\upharpoonright (T_i)_{i\in d}$ and $(Y^t_j)_{j \in d\cdot b} \in \mathcal{F}(\mathcal{G}_2)(t)\upharpoonright (T_i)_{i\in d}$ with $(t^\frown(X^t_j)_{j \in d\cdot b})^{in} =(t^\frown(Y^t_j)_{j \in d\cdot b})^{in}$ we have that $c'_1(t^\frown(X^t_j)_{j \in d\cdot b})\neq c'_2(t^\frown(Y^t_j)_{j \in d\cdot b})$, a contradiction.

          \end{proof}
 Now we return to the proof of Lemma $8$. 
The idea is to use the above lemma to construct mappings $\lambda_1,\lambda_2$ that have the following property: for every $(X_i)_{i\in d}\in \mathcal{F}( \mathcal{G}_1)\upharpoonright (T^0_i)_{i\in d}$, if there exists $(Y_i)_{i\in d}\in \mathcal{F}( \mathcal{G}_2)\upharpoonright (T^0_i)_{i\in d}$ with $c'_1((X_i)_{i\in d})=c'_2((Y_i)_{i\in d})$ then we would like to pick appropriately a $y\in ((Y_i)_{i\in d})^{in}$ so that $ y\notin \bigcup_i X_i$ and $y$ is a node of any element of $\mathcal{C}^{(T_i)_{i\in d}}_{(N^2_i,L^2_i)_{i\in d}}$. Then set $\lambda_1((X_i)_{i\in d})=y$. By an application of Lemma $7$ we eliminate the possibility of the strong subtree envelope $\mathcal{C}^{(T^0_i)_{i\in d}}_{(N^2_i,L^2_i)_{i\in d}}$ to occur on the resulting infinite strong subtree $(T_i)_{i\in d}$. In other words $\mathcal{C}^{(T_i)_{i\in d}}_{(N^2_i,L^2_i)_{i\in d}}= \emptyset$.
    
   Let $(X_i)_{i\in d} \in \mathcal{F}(\mathcal{G}_1)\upharpoonright (T^0_i)_{i\in d}$ and $(Y_i)_{i\in d} \in\mathcal{F}( \mathcal{G}_2)\upharpoonright (T^0_i)_{i\in d}$ such that $$c'_1((X_i)_{i\in d})=c'_2((Y_i)_{i\in d})$$
   
   If $(X_i)_{i\in d}\in \mathcal{C}^{(T^0_i)_{i\in d}}_{L_1}$ and $(Y_i)_{i\in d}\in \mathcal{C}^{(T^0_i)_{i\in d}}_{L_2}$, where $L_1=\cup_{i\in d}L^1_i$ and $L_2=\cup_{i\in d} L^2_i$, then $L_1\neq L_2$. If $L_1=L_2$ we will have $(X'_i)_{i\in d}\in \mathcal{C}^{(T^0_i)_{i\in d}}_{L_1}$ and $(X'_i)_{i\in d}\in \mathcal{C}^{(T^0_i)_{i\in d}}_{L_2}$ such that $c'_1((X'_i)_{i\in d})=c'_2((X'_i)_{i\in d})$ contradicting that $\mathcal{F}(\mathcal{G}_1)\upharpoonright (T^0_i)_{i\in d}\subseteq \mathcal{S}_{1,2}$.  Assume that $L_1$ is not a proper initial segment of $L_2$, or vice versa.
   If now $L_1\neq L_2$ then $\mathcal{C}^{(T^0_i)_{i\in d}}_{L_1}\neq \mathcal{C}^{(T^0_i)_{i\in d}}_{L_2}$.
   Let $l=\min \{( L_2\setminus L_1)\cup (L_1\setminus L_2)\}$ and assume that $l\in L_2\setminus L_1$. Then for every $(X'_i)_{i\in d}\in \mathcal{C}^{(T^0_i)_{i\in d}}_{L_1}$ , pick $$y\in D=\{\cup_i Y'_i(k): k\in |Y'_i|, (Y'_i)_{i\in d}\in \mathcal{C}^{(T^0_i)_{i\in d}}_{L_2}\}=\cup_{i\in d} T^0_i(l)$$ Set $\lambda_1((X'_i)_{i\in d})=y$. All the members of $D$ are in the image of $\mathcal{C}^{(T^0_i)_{i\in d}}_{L_1}$ under $\lambda_1$. Then by an application of Lemma $7$ we get a strong subtree $(T_i)_{i\in d}$ of $(T^0_i)_{i\in d}$ so that $\mathcal{C}^{(T_i)_{i\in d}}_{L_2}=\emptyset$.
   The possibility of $L_1\sqsubseteq L_2$, or vice versa, is eliminated by the following lemma.
   
   \begin{lemma} By passing to a strong subtree, if necessary, we can assume that on $(T^0_i)_{i\in d}$ there are not two strong subtrees $(X_i)_{i\in d}\in \mathcal{C}^{(T^0_i)_{i\in d}}_{L_1}$, $(Y_i)_{i\in d}\in \mathcal{C}^{(T^0_i)_{i\in d}}_{L_2}$, where $L_1=\bigcup_{i\in d}L^1_i$, $L_2=\bigcup_{i\in d} L^2_i$, such that $L_1\sqsubseteq L_2$ and $c'_1((X_i)_{i\in d})=c'_2((Y_i)_{i\in d})$.
   \end{lemma}
   
   \begin{proof}
We prove the lemma by induction on the countable ordinals $\alpha$, $\beta$ the ranks of $\mathcal{F}(\mathcal{G}_1)$ and $\mathcal{F}(\mathcal{G}_2)$ respectively. If both are finite then for every $t\in \prod_{i\in d} T^0_i(n)$, $n\in \omega$, $\mathcal{T}^t_1$ contains only level sets of a fixed cardinality equal to $\alpha-1$  and $\mathcal{T}^t_2$ contains also level sets of fixed cardinality $\beta-1$. Suppose that $\alpha-1<\beta-1$. Notice that the case of $\alpha=\beta$ is not possible in the above context, since we have assumed that $\mathcal{F}(\mathcal{G}_1)\upharpoonright (T^0_i)_{i\in d}\subseteq \mathcal{S}_{1,2}$. Let $t\in \prod_{i\in d} T^0_i(n)$, for some $n\in \omega$. Pick $(X_i)_{i\in d}\in \mathcal{F}(\mathcal{G}_1)$ with $(X_i(0))_{i\in d}=t$. If there is a $(Y_i)_{i\in d}\in \mathcal{F}(\mathcal{G}_2)$, $(Y_i)_{i\in d}\in \mathcal{C}^{(T^0_i)_{i\in d}}_{(N^2_i,L^2_i)_{i\in d}}$ such that $(X_i)_{i\in d}\sqsubseteq (Y_i)_{i\in d}$ and $c'_1((X_i)_{i\in d})=c'_2((Y_i)_{i\in d})$, then subtract a level $l$ from the level set of $(T^0_i)_{i\in d}$ that is in the level set of $(Y_i)_{i\in d}$ as well, so that $L_{(X_i)_{i\in d}}<l$. Let $(T'^0_i)_{i\in d}$ a resulting strong subtree of $(T^0_i)_{i\in d}$ with 
 $(X_i)_{i\in d}\sqsubseteq  (T'^0_i)_{i\in d}$. Then $\mathcal{C}^{(T'^0_i)_{i\in d}}_{(N^2_i,L^2_i)_{i\in d}}=\emptyset$. Set $$(T_i)_{i\in d}\upharpoonright \alpha= (T'^0_i)_{i\in d}\upharpoonright \alpha=(X_i)_{i\in d}.$$
 
Suppose we have constructed $(T_i)_{i\in d}\upharpoonright (\alpha+n)=(T'^{n}_i)_{i\in d}\upharpoonright (\alpha+n)=(X^{n}_i)_{i\in d}$ and we have to decide $(T_i)_{i\in d}\upharpoonright (\alpha+n+1)$. Let $(X^{n+1}_i)_{i\in d}\sqsupset (X^{n}_i)_{i\in d}$, where $(X^{n+1}_i)_{i\in d}\in \mathcal{S}_{\alpha+n+1}((T'^{n}_i)_{i\in d})$. Let $ B_{n+1}= \{ (X'_i)_{i\in d}\in \mathcal{S}_{\alpha}((X^{n+1}_i)_{i\in d})\}$. For every element $(X'_i)_{i\in d} \in B_{n+1}$, if there exists $(Y_i)_{i\in d}\in \mathcal{F}(\mathcal{G}_2)$, so that $(X'_i)_{i\in d}\sqsubseteq (Y_i)_{i\in d}$ and $c'_1((X'_i)_{i\in d})=c'_2((Y_i)_{i\in d})$ then subtract a level $l$ from the level set of $(T'^n_i)_{i\in d}$ that is in $L_{(Y_i)_{i\in d}}$ as well, so that $L_{(X'_i)_{i\in d}}<l$. Having done that for all elements of $B_{n+1}$ we get $(T'^{n+1}_i)_{i\in d}\in \mathcal{S}_{\infty}((T'^n_i)_{i\in d})$ so that $(X^{n+1}_i)_{i\in d}\sqsubset (T'^{n+1}_i)_{i\in d}$. This strong subtree $(T'^{n+1}_i)_{i\in d}$ has the property that for any element $ (Y_i)_{i\in d}$ of $\mathcal{F}(\mathcal{G}_2)\upharpoonright (T'^{n+1}_i)_{i\in d}$ and $(X'_i)_{i\in d}\in B_{n+1}$, one has that if $c'_1((X'_i)_{i\in d})=c'_2((Y_i)_{i\in d})$ then $L_{(X'_i)_{i\in d}}$ is not an initial segment of $L_{(Y_i)_{i\in d}}$. Set $$(T'_i)_{i\in d}\upharpoonright (\alpha+n+1)=(T'^{n+1}_i)_{i\in d}\upharpoonright (\alpha+n+1)=(X^{n+1}_i)_{i\in d}.$$

 Let $(T_i)_{i\in d}$ be such that $(T_i)_{i\in d}\upharpoonright \alpha= (T'^0_i)_{i\in d}\upharpoonright \alpha$ and $(T_i)_{i\in d}\upharpoonright (\alpha +n)=(T'^{n}_i)_{i\in d}\upharpoonright (\alpha +n)$ for all $n\in \omega$. We claim that it satisfies the conclusions of our lemma. Suppose not. Let $(X_i)_{i\in d}\in \mathcal{F}(\mathcal{G}_1)\upharpoonright (T_i)_{i\in d}$, $(Y_i)_{i\in d}\in \mathcal{F}(\mathcal{G}_2)\upharpoonright (T_i)_{i\in d}$ with $(X_i)_{i\in d}\sqsubseteq (Y_i)_{i\in d}$ and $c'_1((X_i)_{i\in d})=c'_2((Y_i)_{i\in d})$. Then $(X_i)_{i\in d}\in  B_{n'}$ for some $n'\in \omega$. This implies that $L_{(X_i)_{i\in d}}$ is not an initial segment of $L_{(Y_i)_{i\in d}}$, a contradiction.

 Consider arbitrary countable ordinals $\alpha$ and $\beta$ and assume that our lemma holds for every $\delta < \alpha$ and $\gamma<\beta$ uniform families. Pick once more $t=(t_0,\dots, t_{d-1})\in \prod_{i\in d} T^0_i(n)$. By definition $\mathcal{F}(\mathcal{G}_1)(t)$ and $\mathcal{F}(\mathcal{G}_2)(t)$ are of ranks $\delta$ and $\gamma$ so the inductive hypothesis gives us $(T^1_i)_{i\in d\cdot b}$ strong subtree of $(T^0_i)_{i\in d}(t)$ that satisfies the following property: For $(X_i)_{i\in d\cdot b}\in \mathcal{F}(\mathcal{G}_1)(t)\upharpoonright (T^1_i)_{i\in d\cdot b}$ and $(Y_i)_{i\in d\cdot b}\in  \mathcal{F}(\mathcal{G}_2)(t)\upharpoonright (T^1_i)_{i\in d\cdot b}$ if we have $c'_1(t^\frown (X_i)_{i\in d\cdot b})=c'_2(t^\frown (Y_i)_{i\in d\cdot b})$ then $L_{(X_i)_{i\in d\cdot b}}$ is not an initial segment of $L_{(Y_i)_{i\in d\cdot b}}$. Set $(T'^2_i)_{i\in d}=t^\frown (T^1_i)_{i\in d\cdot b}$ and $(T_i)_{i\in d}\upharpoonright 2=(T'^2_i)_{i\in d}\upharpoonright 2$. Suppose we have constructed $(T_i)_{i\in d}\upharpoonright n= (T'^{n}_i)_{i\in d}\upharpoonright n$ and we have to decide $(T_i)_{i\in d}\upharpoonright (n+1)$. 
 
 Consider the set $ H=\{ \bigcup_{i\in d} T'^{n}_i(n-1)\}$. For any $r=(r_0,\dots, r_{d-1})\subset H$, where $r_i\in T'^{n}_i(n-1)$ for all $i\in d$, $\mathcal{F}(\mathcal{G}_1)(r)\upharpoonright (T'^n_i)_{i\in d}(r)$ and $\mathcal{F}(\mathcal{G}_2)(r)\upharpoonright (T'^n_i)_{i\in d}(r)$ are of ranks $\delta$ and $\gamma$. The inductive hypothesis gives us strong subtrees $(T'^{r}_i)_{i\in d\cdot b}\in \mathcal{S}_{\infty}( (T'^n_i)_{i\in d}(r))$ that satisfy the following: For any $(Z_i)_{i\in d\cdot b}\in \mathcal{F}(\mathcal{G}_1)(r)\upharpoonright (T'^{r}_i)_{i\in d\cdot b}$ and $(Y_i)_{i\in d\cdot b}\in \mathcal{F}(\mathcal{G}_2)(r)\upharpoonright (T'^{r}_i)_{i\in d\cdot b}$, $$\text{if }c'_1(r^\frown (Z_i)_{i\in d\cdot b})=c'_2(r^\frown (Y_i)_{i\in d\cdot b})\text{, then }L_1\text{ is not an initial segment of } L_2$$ for $L_1$ being the level set of $(Z_i)_{i\in d\cdot b}$ and $L_2$  the one of $(Y_i)_{i\in d\cdot b}$. Repeat the above step for any such an $r$ to get strong subtrees $(T''_i)_{i\in d\cdot b^n}$. Set $$(T'^{n+1}_i)_{i\in d}=((T'^{n}_i)_{i\in d}\upharpoonright n)^\frown (T''_i)_{i\in d\cdot b^n}\text{ and }(T_i)_{i\in d}\upharpoonright (n+1)=(T'^{n+1}_i)_{i\in d}\upharpoonright (n+1).$$
 
 Let $(T_i)_{i\in d}\upharpoonright n= (T'^{n}_i)_{i\in d}\upharpoonright n$, for all $n\in \omega$. $(T_i)_{i\in d}\in \mathcal{S}_{\infty}((T^0_i)_{i\in d})$ satisfies the conclusions of our lemma. Suppose not. Let $(X_i)_{i\in d}\in \mathcal{F}(\mathcal{G}_1)\upharpoonright (T_i)_{i\in d}$, $(Y_i)_{i\in d}\in \mathcal{F}(\mathcal{G}_2)\upharpoonright (T_i)_{i\in d}$ with $(X_i)_{i\in d}\sqsubseteq (Y_i)_{i\in d}$ and $c'_1((X_i)_{i\in d})=c'_2((Y_i)_{i\in d})$. Let $t=(X_i(0))_{i\in d}=(Y_i(0))_{i\in d}$. On $ (T_i)_{i\in d}(t)$ we have that if $c'_1((X_i)_{i\in d}=t^\frown(X'_i)_{i\in d\cdot b})=c'_2(t^\frown(Y'_i)_{i\in d\cdot b}=(Y_i)_{i\in d})$, then $L_{(X'_i)_{i\in d\cdot b}}$ is not an initial segment of $L_{(Y'_i)_{i\in d\cdot b}}$, a contradiction.

   \end{proof}

   Now we return to the proof of Lemma $8$. Let $(X_i)_{i\in d} \in \mathcal{F}(\mathcal{G}_1)\upharpoonright (T^0_i)_{i\in d}$ and $(Y_i)_{i\in d} \in\mathcal{F}( \mathcal{G}_2)\upharpoonright (T^0_i)_{i\in d}$, with $(X_i)_{i\in d}\in \mathcal{C}^{(T^0_i)_{i\in d}}_{(N^1_i,L^1_i)_{i\in d}}$ and $(Y_i)_{i\in d}\in \mathcal{C}^{(T^0_i)_{i\in d}}_{(N^2_i,L^2_i)_{i\in d}}$. Let also $$c'_1((X_i)_{i\in d})=c'_2((Y_i)_{i\in d})$$
   If both sets $\bigcup_{i\in d}N^1_i$ and $\bigcup_{i\in d} N^2_i$ are nonempty 
   there are the following possibilities. Firstly $$\cup_{i\in d}N^1_i\neq \cup_{i\in d} N^2_i\text{ and } L^1_{in}\neq \emptyset \text{ or }L^2_{in}\neq \emptyset$$ If there exists either $y\in \bigcup_{i\in d} N^2_i$ so that $y\notin ((X'_i)_{i\in d})$, for a $(X'_i)_{i\in d}\in \mathcal{C}^{(T^0_i)_{i\in d}}_{( N_i^1, L^1_i)_i}$, or $x \in \bigcup_{i\in d} N^1_i$ so that $x\notin ((Y'_i)_{i\in d})$, for a $(Y'_i)_{i\in d}\in \mathcal{C}^{(T^0_i)_{i\in d}}_{( N^2_i,  L^2_i)_i}$, then set $$\lambda_1((X'_i)_{i\in d})=y\text{ or }\lambda_2((Y'_i)_{i\in d})=x$$ respectively. If no such an $y$ or $x$ are possible to be found and since by Lemma $9$ we have that $((X_i)_{i\in d})^{in}\neq ((Y_i)_{i\in d})^{in}$, we conclude that $L_{in}^1\neq L_{in}^2$.
    
  Suppose that $L_{in}^1\neq L_{in}^2$. 
  In this case let $$l=\min \{ (L^1_{in}\setminus L^2_{in})\cup (L^2_{in}\setminus L^1_{in})\}$$ Suppose that $l\in L^1_{in}$. Identical argument holds if $l\in L^2_{in}$. Consider the set $$D=\{ x\in \cup_{i\in d} T^0_i(l): (\exists x'\in \cup_{i\in d} N^1_i) x\leq x'\}$$ Then for every $(Y'_i)_{i\in d}\in \mathcal{C}^{(T^0_i)_{i\in d}}_{(N^2_i,L^2_i)_{i\in d}}$ pick an $x\in D$ and set $\lambda_2((Y'_i)_{i\in d})=x$. Notice that every element of $D$ is a node of any strong subtree of the strong subtree envelope $\mathcal{C}^{(T^0_i)_{i\in d}}_{(N^2_i,L^2_i)_{i\in d}}$. Identical argument applies in the case of $\bigcup_{i\in d}N^1_i= \bigcup_{i\in d} N^2_i$. In this case by Lemma $9$ we must have $L_{in}^1\neq L_{in}^2$.
  
  Consider the case that $\bigcup_{i\in d}N^1_i\neq \bigcup_{i\in d} N^2_i$ and $L^1_{in}=L^2_{in}=\emptyset$. $((X_i)_{i\in d})^{in}\neq ((Y_i)_{i\in d})^{in}$ implies that there exists either $y\in \bigcup_{i\in d} N^2_i$ so that $y\notin (X'_i)_{i\in d}$, or $x \in \cup_{i\in d} N^1_i$ so that $x\notin (Y'_i)_{i\in d}$, for $(X'_i)_{i\in d}$ a member of $\mathcal{C}^{(T^0_i)_{i\in d}}_{(N_i,L_i)_{i\in d}}$ and $(Y'_i)_{i\in d}$ a member of $\mathcal{C}^{(T^0_i)_{i\in d}}_{(N_i,L_i)_{i\in d}}$. We set $\lambda_1((X'_i)_{i\in d})=y$ or $\lambda_2((Y'_i)_{i\in d})=x$.
  If not such an $x$ or $y$ is possible to be fund, then $\bigcup_{i\in d}N^1_i$ is in any strong subtree of $\mathcal{C}^{(T^0_i)_{i\in d}}_{(N^2_i,L^2_i)_i}$ and $\bigcup_{i\in d} N^2_i$ is in any strong subtree of $\mathcal{C}^{(T^0_i)_{i\in d}}_{(N^1_i,L^1_i)_i}$. This implies that $((X_i)_{i\in d})^{in}=((Y_i)_{i\in d})^{in}$, a contradiction with Lemma $9$. 
    
  Lastly if $\bigcup_{i\in d}N^1_i\neq \emptyset$ and $\bigcup_{i\in d} N^2_i=\emptyset$. In the case that $((Y_i)_{i\in d})^{in}$ is not defined, there will be an $x\in \bigcup_{i\in d}N^1_i$ so that $x\notin (Y'_i)_{i\in d}$ for some $(Y'_i)_{i\in d}\in \mathcal{C}^{(T^0_i)_{i\in d}}_{\cup_{i\in d}L^2_i}$. To see this notice that the strong subtree envelope $ \mathcal{C}^{(T^0_i)_{i\in d}}_{\cup_{i\in d}L^2_i}$ is taken over the level set $\cup_{i\in d}L^2_i$. As a result we can choose a $(Y'_i)_{i\in d}\in \mathcal{C}^{(T^0_i)_{i\in d}}_{\cup_{i\in d}L^2_i}$ so that $x\notin \cup_{i\in d}Y'_i$.
  Set $\lambda_2((Y'_i)_{i\in d})=x$. If now $((Y_i)_{i\in d})^{in}$ is defined, since $((X_i)_{i\in d})^{in}\neq ((Y_i)_{i\in d})^{in}$, if we cannot choose such an $x$, then we will be able to choose $y\in ((Y_i)_{i\in d})^{in}$ and set $\lambda_1((X_i)_{i\in d})=y$.

    The above show that we can construct mappings $\lambda_1,\lambda_2$ such that by two consecutive applications of Lemma $7$ we get $(T_i)_{i\in d}\in S_{\infty}((U_i)_{i\in d})$ that $$\lambda_j(\mathcal{F}(\mathcal{G}_j)\upharpoonright (T_i)_{i\in d}))\cap (T_i)_{i\in d}=\emptyset \text{, }j\in \{1,2\}$$ Suppose that $c_1((X_i)_{i\in d})=c_2((Y_i)_{i\in d})$ for some $(X_i)_{i\in d}\in \mathcal{F}(\mathcal{G}_1)\upharpoonright (T_i)_{i\in d}$ and a $(Y_i)_{i\in d}\in \mathcal{F}(\mathcal{G}_2)\upharpoonright (T_i)_{i\in d}$, where $(X_i)_{i\in d}\in \mathcal{C}^{(T_i)_{i\in d}}_{(N^1_i,L^1_i)_{i\in d}}$ and $(Y_i)_{i\in d}\in \mathcal{C}^{(T_i)_{i\in d}}_{(N^2_i,L^2_i)_{i\in d}}$.
  
    This contradicts the way that $\lambda_1,\lambda_2$ are defined. We must have either $\mathcal{C}^{(T_i)_{i\in d}}_{(N^1_i,L^1_i)_i}=\emptyset$ or $\mathcal{C}^{(T_i)_{i\in d}}_{(N^2_i,L^2_i)_i}=\emptyset$. Therefore $(T_i)_{i\in d}$ satisfies the second alternative of our lemma. 
  \end{proof}
    
     We make the following observation:
     
     \begin{lemma} Under the assumptions of Lemma $8$, if $\mathcal{F}(\mathcal{G}_1)$ is an $\alpha$-uniform family, $\mathcal{F}(\mathcal{G}_2)$ is a $\beta$-uniform, with $\alpha \neq \beta$,then the first statement of the lemma is excluded. \end{lemma}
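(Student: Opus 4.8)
The plan is to obtain a contradiction from the first alternative of Lemma~8. Suppose, towards a contradiction, that the strong subtree $(T_i)_{i\in d}$ furnished by Lemma~8 satisfies its first conclusion; in particular $\mathcal{F}(\mathcal{G}_1)\upharpoonright (T_i)_{i\in d}=\mathcal{F}(\mathcal{G}_2)\upharpoonright (T_i)_{i\in d}$. By Lemma~4, restriction to an infinite strong subtree preserves uniformity and its rank, so the family on the left is $\alpha$-uniform on $(T_i)_{i\in d}$ while the family on the right is $\beta$-uniform on $(T_i)_{i\in d}$. Hence one and the same family of finite strong subtrees of $(T_i)_{i\in d}$ is simultaneously $\alpha$-uniform and $\beta$-uniform, and it remains only to see that this forces $\alpha=\beta$.

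So the crux is to prove that the rank of a uniform family is an invariant: no family $\mathcal{H}$ of finite strong subtrees of $(T_i)_{i\in d}$ can be both $\alpha$-uniform and $\beta$-uniform with $\alpha\neq\beta$. I would prove this by transfinite induction, reading off the clauses of Definition~9. If $\alpha=0$ then $\emptyset\in\mathcal{H}$, which by the clauses governing positive ordinals forces $\beta=0$ as well; so we may assume $\alpha,\beta>0$ and $\emptyset\notin\mathcal{H}$. For every level $n$, every $t=(t_0,\dots,t_{d-1})\in\prod_{i\in d}T_i(n)$ and every choice $j_i\in b$, the family $(\pi_{j_i}\mathcal{H}(t_i))_{i\in d}$ on $(T_i[t_i^\frown j_i])_{i\in d}$ is uniform of some rank $<\alpha$ and of some rank $<\beta$; by the inductive hypothesis these two ranks coincide, and we call their common value $\alpha_t$. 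The structural clauses then recover $\alpha$ from the function $t\mapsto\alpha_t$: the successor case $\alpha=\gamma+1$ is precisely the situation in which $\alpha_t=\gamma$ for every such $t$, whereas the limit case is the one in which, by clauses (3.1)--(3.2), the values $\alpha_t$ are finite-to-one and tend to $\alpha$ along every infinite chain of the level product. These two possibilities are mutually exclusive, and in each of them $\alpha$ is uniquely determined — as $\gamma+1$ in the successor case, and as $\sup_{t\in C}\alpha_t$ for any infinite chain $C$ in the limit case. Carrying out the identical computation with $\beta$ in place of $\alpha$ yields $\beta=\alpha$.

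This contradicts the hypothesis $\alpha\neq\beta$. Consequently the first alternative of Lemma~8 cannot occur, so the strong subtree produced by Lemma~8 necessarily satisfies its second conclusion, namely that the images of $c_1'$ on $\mathcal{F}(\mathcal{G}_1)\upharpoonright (T_i)_{i\in d}$ and of $c_2'$ on $\mathcal{F}(\mathcal{G}_2)\upharpoonright (T_i)_{i\in d}$ are disjoint. The only step demanding genuine care is the rank-invariance induction, in particular keeping the successor and limit cases apart and handling the limit case through the behaviour of $t\mapsto\alpha_t$ along infinite chains of the level product; but this is the exact tree analogue of the elementary fact that the rank of a uniform family of finite subsets of $\omega$ is well defined, so no real difficulty is expected.
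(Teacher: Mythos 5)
Your argument is correct and is essentially the paper's own: the paper disposes of this lemma by noting it is ``an easy inductive argument that an $\alpha$-uniform family cannot be $\beta$-uniform for $\beta\neq\alpha$,'' which is exactly the rank-invariance you prove, combined (as you do, via Lemma~4) with the fact that the first alternative would make $\mathcal{F}(\mathcal{G}_1)\upharpoonright (T_i)_{i\in d}=\mathcal{F}(\mathcal{G}_2)\upharpoonright (T_i)_{i\in d}$ simultaneously $\alpha$- and $\beta$-uniform. You merely spell out the transfinite induction that the paper leaves implicit, so the two proofs coincide in substance.
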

     
     \begin{proof} It is an easy inductive argument that if $\mathcal{G}$ is an $\alpha$-uniform cannot be $\beta$-uniform, for any $\beta\neq \alpha$. \end{proof}

   Finally we are able do the inductive step of Theorem $7$ for any $\alpha$-uniform family on $U$.\\

 \subsection{ Inductive step}

    Let  $\mathcal{G}$ be an $\alpha$-uniform family of finite strong subtrees of $U$. For any $t\in U$, $\mathcal{G}(t)$ is a $\beta$-uniform family on $U(t)$ for some $\beta<\alpha$. Therefore by the inductive hypothesis we can assume that the coloring $c_t$ defined on $\mathcal{G}(t)$ by $c_t((X_i)_{i\in b})=c(t^\frown (X_i)_{i\in b})$, is canonical. As a consequence at each node $t$ of $U$ we have a uniform family $\mathcal{F}(\mathcal{G})(t)$, that results by taking the union of the strong subtree envelopes of all members of $\mathcal{T}^t$, together with $f_t$ and a one-to-one mapping $\phi_t$ that witness the coloring $c_t$ being canonical on $U(t)$. As we have mentioned above $c_t$ is defined on $\mathcal{G}(t)$ by $c_t((X_i)_{i\in b})=\phi_t(f_t((X_i)_{i\in b})=(N_i,L_i)_{i\in b})$ where $(X_i)_{i\in b}\in \mathcal{G}(t)$, $(N_i,L_i)_{i\in b} \in \mathcal{T}^t$ and $\mathcal{C}^{U(t)}_{(N_i,L_i)_{i\in d}}\subset \mathcal{F}(\mathcal{G})(t)$.\\

     We will construct the strong subtree $T$ that satisfies the conclusion of the Theorem $7$, by applying continuously Lemma $8$. Pick a node $r\in U$ and set $T(0)=r$. Let $(r^\frown i)_{i\in b}$ be the set of the immediate successors of $r$ in $U$ and let $T^2=U[r]$. Set $T(1)=(r^\frown i)_{i\in b}$. Equivalently $T\upharpoonright 2=T^2\upharpoonright 2$. Suppose we have constructed $T \upharpoonright n=T^n\upharpoonright n$ and we have to decide $T \upharpoonright (n+1)$. 
     
     Let $ T^n(n-1)=(r_p)_{p \in b^{n-1}}$. Consider the uniform families $\mathcal{F}(\mathcal{G})(r_p)$ on $T^n(r_p)$, for all $p\in b^{n-1}$.      
          For any pair $\mathcal{F}(\mathcal{G})(r_i)$, on $T^n(r_i)$ and $\mathcal{F}(\mathcal{G})(r_j)$ on $T^n(r_j)$, $i,j\in b^{n-1}$, apply Lemma $8$ up to translation. Having done that for all possible such pairs, we get strong subtrees $(T'^1_m)_{m\in b^n}\in \mathcal{S}_{\infty}((T^n(r_p))_{p\in b^{n-1}})$ that satisfy either the first or the second alternative of Lemma $8$.
     Consider the uniform families $\mathcal{F}(\mathcal{G})(r_0)\upharpoonright (T'^1_m)_{m\in b^n}$ and $\mathcal{F}(\mathcal{G})(s)\upharpoonright (T'^1_m)_{m\in b^n}$, for $s\in T^n(n')$, $n'<n-1$. There exists a $k=b^{n-1-n'}$ and $l\in \omega$, so that $\{( r_p)_{p\in [l\cdot b,  (l\cdot b)+k)}\}=T^n(n-1)\cap T^n(s)$. In other words $s^\frown i$ has $k/b$ many successors on $T^n(n-1)$. These successors are precisely: $(r_p)_{p\in [( l\cdot b)+(i\cdot k/b), ( l\cdot b)+(i\cdot k/b)+k/b)}$. 
     As a result $(T'^1_m)_{m \in [l\cdot b^2,  (l\cdot b^2)+k\cdot b)}\in \mathcal{S}_{\infty}(T^n(s))$.

     Apply Lemma $8$ on the uniform family $\mathcal{F}(\mathcal{G})(r_0)\upharpoonright (T'^1_m)_{m\in [0, b)}$ and the family $\pi_m(\mathcal{F}(\mathcal{G})(s)\upharpoonright T'^1_{(l\cdot b^2)+(m\cdot k)})$ translated on $T'^1_m$, for all $m \in b$. If we have the first alternative of Lemma $8$ holding, we proceed to the node $r_1$. Otherwise we consider the uniform families $\mathcal{F}(\mathcal{G})(r_0)\upharpoonright( T'^1_m)_{m\in [0,b)}$ and $\pi_m(\mathcal{F}(\mathcal{G})(s)\upharpoonright (T'^1_{(l\cdot b^2+1)+(m \cdot k)})$ translated on $T'^1_m$, for all $m\in b$. Once again if we get the first statement of Lemma $8$, we proceed to the node $r_1$, otherwise we apply again Lemma $8$ to the uniform families $\mathcal{F}(\mathcal{G})(r_0)\upharpoonright (T'^1_m)_{m\in b}$ and $\pi_m(\mathcal{F}(\mathcal{G})(s)\upharpoonright T'^1_{(l\cdot b^2+2)+(m \cdot k)}$ translated on $T'^1_m$, for all $m \in b$, etc. Having done that for the finite set of all possible pairs of nodes $r_p$ and $s$, we get strong subtrees $(T'^n_m)_{m\in b^{n}}\in \mathcal{S}_{\infty}((T'^1_m)_{m\in b^n})$ such that for any two uniform families $\mathcal{F}(\mathcal{G})(r_{p})$ and $\mathcal{F}(\mathcal{G})(s)$ we have either the first or the second statement of Lemma $8$ holding.
     
     Suppose that we get always the first statement of Lemma $8$. In this case let $T^{n+1}=(T^n\upharpoonright n)^\frown (T'^n_m)_{m\in b^{n}}$. Set $T\upharpoonright n+1=T^{n+1}\upharpoonright n+1$.
     
      If the second statement of Lemma $8$ occurs, we distinguish two cases: first if it occurs on an application of Lemma $8$ on $\mathcal{F}(\mathcal{G})(r_i)$ and $\mathcal{F}(\mathcal{G})(r_{j})$, $i,j\in b^{n-1}$. This case has no impact on the argument, since $c(\mathcal{F}(\mathcal{G})(r_{i})\upharpoonright \tilde{T}^n)\cap c(\mathcal{F}(\mathcal{G})(r_{j}\upharpoonright \tilde{T}^n)=\emptyset$, where $\tilde{T}_n=(T^n\upharpoonright n)^\frown (T'^n_m)_{m\in b^{n}}$.

  Secondly if it occurs on an application of Lemma $8$ on the uniform families $\mathcal{F}(\mathcal{G})(r_{p})$ and $\mathcal{F}(\mathcal{G})(s)$.  In this case we have to reassure that if $c(\mathcal{F}(\mathcal{G})(r_{p})\upharpoonright \tilde{T}^n)\cap c(\mathcal{F}(\mathcal{G})(s)\upharpoonright (T'^1_m)_{m \in [l\cdot b^2,  (l\cdot b^2)+k\cdot b)})=\emptyset$, then $c(\mathcal{F}(\mathcal{G})(r_{p}))\cap c(\mathcal{F}(\mathcal{G})(s))=\emptyset$ on an infinite strong subtree of $\tilde{T}^n$.
     
      At first notice that there are at most finitely many strong subtrees $X_s=(X'_i)_{i\in b}$ members of $ \mathcal{F}(\mathcal{G})(s)\upharpoonright (T'^1_m)_{m \in [l\cdot b^2,  (l\cdot b^2)+k\cdot b)}$ with $L_{X_s}< n$. We can eliminate the possibility of any strong subtree $X_s=(X'_i)_{i\in b}$, with $L_{X_s}<n$, that corresponds to the uniform family $\mathcal{F}(\mathcal{G})(s)$, having the same color with a strong subtree $X_{r_p}=(Y'_i)_{i\in b}\in \mathcal{F}(\mathcal{G})(r_p)$. We do that by simply eliminating a level $l$ from the level set $L_{ \tilde{T}^n[r_{p}]}$ so that $l\in L_{(N^{r_{p}}_i, L^{r_{p}}_i)_i}\cap L_{ \tilde{T}^n[r_{p}]}$ where $(Y'_i)_{i\in b}\in \mathcal{C}^{\tilde{T}^n}_{(N^{r_{p}}_i, L^{r_{p}}_i)_i}$. In any of the resulting strong subtrees $T'$ of $\tilde{T}^n[r_{p}]$, with $L_{T'}=L_{\tilde{T}^n[r_{p}]}\setminus \{l\}$, we have that $\mathcal{C}^{T'}_{(N^{r_{p}}_i, L^{r_{p}}_i)_i}=\emptyset$.
      For notational simplicity we are going to use $X_s, X_{r_{p}}$ instead of $(X'_i)_{i\in b}$ and $(Y'_i)_{i\in b}$ respectively.\\
           
     There may be a strong subtree $X_s$ with a level set that contains both levels smaller than $n$ and bigger as well. In that case we restrict on $Y$ the initial segment of $X_s$ with level set that lies below $n$ i.e. $Y\sqsubset X$ and $L_Y<n$. Observe that $\mathcal{F}(\mathcal{G})(s)(Y)$ contains $d>b$ sequences of finite strong subtrees. Notice that $d$ is a multiple of $b$. In that case we need an extended version of Lemma $8$ as follows:
     
     \begin{lemma}  Let $(U_i)_{i\in d}$, where $d=k b$ is a multiple of $b$, the branching number of $U_i$ for all $i$.  Let $\mathcal{T}_1$ be a family of node-level sets on $(U_i)_{i\in d'}$ where $d'\subset d$, that generates an $\beta$-uniform family $ \mathcal{F}(\mathcal{G}_1)$ on $(U_i)_{i\in d'}$. Let $\mathcal{T}_2$ be a family of node-level sets on $(U_i)_{i\in d}$, that generates an $\alpha$-uniform family $\mathcal{F}(\mathcal{G}_2)$ on $(U_i)_{i\in d}$, for $\alpha>\beta$. Let $c'_1$ a mapping on $\mathcal{F}(\mathcal{G}_1)$ with the property that $c'_1((X^1_i)_{i\in d'})=c'_1((X^2_i)_{i\in d'})$ if and only if $(X^1_i)_{i\in d'}:(N^1_i, L^1_i)_{i\in d'}=(X^2_i)_{i\in d'}:(N^1_i, L^1_i)_{i\in d'}$ for $(N^1_i, L^1_i)_{i\in d'}\in \mathcal{T}_1$. Let also $c_2$ a mapping on $\mathcal{F}(\mathcal{G}_2)$ such that $c_2((Y^1_i)_{i\in d})=c_2((Y^2_i)_{i\in d})$ if and only if $(Y^1_i)_{i\in d}:(N^2_i, L^2_i)_{i\in d}=(Y^2_i)_{i\in d}:(N^2_i, L^2_i)_{i\in d}$ for $(N^2_i, L^2_i)_{i\in d}\in \mathcal{T}_2$.
     
 There exists a strong subtree $(T_i)_{i\in d}$ of $(U_i)_{i\in d}$ such that the following holds:
     $$c'_1(\mathcal{F}( \mathcal{G}_1)\upharpoonright (T_i)_{i\in d'}) \cap c'_2(\mathcal{F}(\mathcal{G}_2)\upharpoonright (T_i)_{i\in d})=\emptyset.$$
   \end{lemma}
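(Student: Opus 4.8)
The plan is to run the argument of Lemma $8$ restricted to its ``second alternative'' branch, with the bookkeeping adjusted to the asymmetric situation $d'\subseteq d$, $\alpha>\beta$. The point is that here the first alternative of Lemma $8$ can never occur: if $d'\subsetneq d$ then a $d'$-tuple is never equal to a $d$-tuple, and in any case, by Lemma $11$, a family of rank $\alpha$ is never of rank $\beta$, so we cannot have $\mathcal{F}(\mathcal{G}_1)\upharpoonright(T_i)=\mathcal{F}(\mathcal{G}_2)\upharpoonright(T_i)$. Hence no initial Ramsey dichotomy is needed, and the whole task is to produce disjoint images directly, along the lines of the final part of the proof of Lemma $8$.

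First I would establish the analogues of Lemmas $9$ and $10$ in this setting, by induction on the ranks $\beta$ and $\alpha$ of $\mathcal{F}(\mathcal{G}_1)$ and $\mathcal{F}(\mathcal{G}_2)$: by passing to a strong subtree $(T^0_i)_{i\in d}$ of $(U_i)_{i\in d}$ we may assume that there are no $(X_i)_{i\in d'}\in\mathcal{F}(\mathcal{G}_1)\upharpoonright(T^0_i)_{i\in d'}$ and $(Y_i)_{i\in d}\in\mathcal{F}(\mathcal{G}_2)\upharpoonright(T^0_i)_{i\in d}$ with $c'_1((X_i)_{i\in d'})=c'_2((Y_i)_{i\in d})$ such that either $((X_i)_{i\in d'})^{in}$ agrees with the restriction of $((Y_i)_{i\in d})^{in}$ to the coordinates of $d'$ (together with equality of the corresponding level sets), or $L^1:=\bigcup_{i\in d'}L^1_i$ is an initial segment of $L^2:=\bigcup_{i\in d}L^2_i$. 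The proofs copy those of Lemmas $9$ and $10$: in the base case (both ranks finite) one applies the Pudl\'ak-R\"odl lemma, Lemma $2$, to the level sets of the two families above a common finite strong subtree; in the inductive step one passes, for each $t$, to $\mathcal{F}(\mathcal{G}_1)(t)$ and $\mathcal{F}(\mathcal{G}_2)(t)$, which have strictly smaller ranks $\beta'<\beta$, $\alpha'<\alpha$ still satisfying $\alpha'>\beta'$, applies the inductive hypothesis, and fuses as usual.

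After these reductions, exactly as at the end of the proof of Lemma $8$, for every $(X_i)_{i\in d'}\in\mathcal{F}(\mathcal{G}_1)\upharpoonright(T^0_i)_{i\in d'}$ which shares a $c'$-color with some $(Y_i)_{i\in d}\in\mathcal{F}(\mathcal{G}_2)\upharpoonright(T^0_i)_{i\in d}$, say $(X_i)_{i\in d'}\in\mathcal{C}^{(T^0_i)_{i\in d'}}_{(N^1_i,L^1_i)_{i\in d'}}$ and $(Y_i)_{i\in d}\in\mathcal{C}^{(T^0_i)_{i\in d}}_{(N^2_i,L^2_i)_{i\in d}}$, I would define values of two mappings $\lambda_1$ on $\mathcal{F}(\mathcal{G}_1)$ and $\lambda_2$ on $\mathcal{F}(\mathcal{G}_2)$, going through the same finite list of cases (both node sets nonempty; exactly one empty; both empty): one picks a node $x\in\bigcup_{i\in d'}N^1_i$ lying outside every member of $\mathcal{C}^{(T^0_i)_{i\in d}}_{(N^2_i,L^2_i)_{i\in d}}$ and sets $\lambda_2((Y'_i)_{i\in d})=x$, or a node $y\in\bigcup_{i\in d}N^2_i$ lying outside every member of $\mathcal{C}^{(T^0_i)_{i\in d'}}_{(N^1_i,L^1_i)_{i\in d'}}$ and sets $\lambda_1((X'_i)_{i\in d'})=y$, or, when the node sets coincide or one is absorbed into the other, a level belonging to $L^1_{in}\setminus L^2_{in}$ or to $L^2_{in}\setminus L^1_{in}$; the reductions of the previous paragraph, together with the hypothesis $\alpha>\beta$ (which prevents $L^1$ from being an initial segment of $L^2$ in the relevant configuration), rule out the degenerate case in which neither such a node nor such a level exists. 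Two successive applications of Lemma $7$ (using the mixed-arity Lemma $6$ where the domain and range arities differ) then yield $(T_i)_{i\in d}$ with $\lambda_j(\mathcal{F}(\mathcal{G}_j)\upharpoonright(T_i))\cap\bigcup_i T_i=\emptyset$ for $j\in\{1,2\}$; this forces $\mathcal{C}^{(T_i)_{i\in d'}}_{(N^1_i,L^1_i)_{i\in d'}}=\emptyset$ or $\mathcal{C}^{(T_i)_{i\in d}}_{(N^2_i,L^2_i)_{i\in d}}=\emptyset$ for every pair of envelopes that could witness a common color, and hence $c'_1(\mathcal{F}(\mathcal{G}_1)\upharpoonright(T_i)_{i\in d'})\cap c'_2(\mathcal{F}(\mathcal{G}_2)\upharpoonright(T_i)_{i\in d})=\emptyset$.

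The main obstacle is the bookkeeping imposed by the mismatch $d'\subsetneq d$: one has to give a precise meaning to, and a correct proof of, the ``no common initial segment'' reductions when one family consists of $d'$-tuples and the other of $d$-tuples, and one has to verify in each case of the definition of $\lambda_1,\lambda_2$ that the inequality $\alpha>\beta$ excludes the configuration in which no admissible node or level can be chosen. The indexing arithmetic relating the coordinate blocks of $(U_i)_{i\in d}$ to the branching blocks of size $k=b^{\,n-1-n'}$ that appears in the application is routine once this is in place.
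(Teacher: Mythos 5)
Your proposal is correct in outline, but it takes a genuinely different and much heavier route than the paper. The paper proves this lemma in two lines: it extends the pair $(\mathcal{F}(\mathcal{G}_1),c'_1)$ from $(U_i)_{i\in d'}$ to $(U_i)_{i\in d}$ by letting the extended coloring depend only on the $d'$-coordinates (formally $c'_1((X_i)_{i\in d})=c'_1((X_j)_{j\in d'})$ with $X_j$ for $j\in d'$ unchanged), so that both families live on the same $d$-sequence, and then applies Lemma $8$ as a black box together with Lemma $11$: since the extended family still has rank $\beta\neq\alpha$, the first alternative of Lemma $8$ is excluded, and the second alternative is exactly the desired disjointness of color images. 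You instead propose to re-run the internal machinery of Lemma $8$ in the mixed-arity setting, re-proving analogues of Lemmas $9$ and $10$ for a $d'$-indexed versus a $d$-indexed family, then defining $\lambda_1,\lambda_2$ and invoking Lemma $7$ (and Lemma $6$) twice. That can be made to work, but it forces you to give precise meaning to agreement of $((X_i)_{i\in d'})^{in}$ with $((Y_i)_{i\in d})^{in}$ and to redo all the fusion arguments, which the paper's reduction avoids entirely; conversely, the paper's route only needs the (easy but unstated) check that the extended family is still $\beta$-uniform on $(U_i)_{i\in d}$ and that the extension does not change the set of colors. One imprecision in your sketch: the hypothesis $\alpha>\beta$ does not by itself prevent $L^1$ from being an initial segment of $L^2$; that configuration is destroyed by the Lemma-$10$-style level deletions you build into your reductions, while the rank inequality only serves (via Lemma $11$) to rule out the equality alternative. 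Since you do include those reductions, this is a misattribution of which hypothesis does the work rather than a gap in the argument.
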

   \begin{proof} Notice that we can extend $(\mathcal{F}(\mathcal{G}_1),c'_1)$ on $(U_i)_{i\in d}$ by 
   $c'_1((X_i)_{i\in d})=c'_1((X_j)_{j\in d'})$ and $X_i=X_j$ for $j\in d'$. Then apply Lemma $8$ and Lemma $11$.
   
   \end{proof}
     
      We can consider now the corresponding uniform families $\mathcal{F}(\mathcal{G})(s)(Y)$ on $\tilde{T}^n(Y)$ and $\mathcal{F}(\mathcal{G})(r_p)$ on $\tilde{T}^n(r_p)$. Then apply Lemma $12$ to get a strong subtree that satisfies its conclusion. Repeating that for the finite set of all $X_s\in \mathcal{F} (\mathcal{G})(s)\upharpoonright \tilde{T^n}$ whose set of levels intersects $[n,\infty)$, we succeed in getting a strong subtree $T^{n+1}$ of $\tilde{T}^n$ such that $$c(\mathcal{F}(\mathcal{G})(s)\upharpoonright T^{n+1} )\cap c(\mathcal{F}(\mathcal{G})(r_p)\upharpoonright T^{n+1})=\emptyset$$
      
      Set $T\upharpoonright n+1=T^{n+1}\upharpoonright n+1$.
       
           Proceeding in that manner we construct $T\in \mathcal{S}_{\infty}(U)$, where $T \upharpoonright n=T^{n}\upharpoonright n$, for all $n\in \omega$,
          such that for any two nodes $s_0,s_1\in T$, with $|s_0|\leq |s_1|$, we have one of the two following alternatives.
           \begin{enumerate}
           \item{} There exists $(T^{s_0}_i)_{i\in b}\in \mathcal{S}_{\infty}(T(s_0))$ such that $\mathcal{F}(\mathcal{G})(s_0)\upharpoonright (T^{s_0}_i)_{i\in b}=\mathcal{F}(\mathcal{G})(s_1)$, up to translation. Also for every $X\in \mathcal{F}(\mathcal{G})(s_0)\upharpoonright (T^{s_0}_i)_{i\in b}, Y\in \mathcal{F}(\mathcal{G})(s_1)$, with $Y$ a translate of $X$, it holds that $c(X)=c(Y)$.
           \item{} $c(\mathcal{F}(\mathcal{G})(s_0))\cap c(\mathcal{F}(\mathcal{G})(s_1))=\emptyset$.
           \end{enumerate}
           
           To define precisely the family of node-level sets $\mathcal{T}$ that will satisfy the conclusions of Theorem $7$ we need the following result.
              \begin{proposition} Let $\mathcal{T}_1$ and $\mathcal{T}_2$ be two families of node-level sets that generate two uniform families $\mathcal{F}(\mathcal{G}_1)$ and $\mathcal{F}(\mathcal{G}_2)$ on $U$ by taking the union of all strong subtree envelopes of all node-level sets of $\mathcal{T}_1$ and $\mathcal{T}_2$ respectively. Let $c_1$ a mapping on $\mathcal{F}(\mathcal{G}_1)$ with the property that $c_1(X_1)=c_1(X_2)$ if and only if $X_1:(N_1, L_1)=X_2:(N_1, L_1)$ for $(N_1, L_1)\in \mathcal{T}_1$. Let also $c_2$ a mapping on $\mathcal{F}(\mathcal{G}_2)$ so that $c_2(Y_1)=c_2(Y_2)$ if and only if $Y_1:(N_2, L_2)=Y_2:(N_2, L_2)$ for $(N_2, L_2)\in \mathcal{T}_2$.
                     If by an application of Lemma $8$ we get a $T\in \mathcal{S}_{\infty}(U)$ such that the first alternative holds, then we have that $\mathcal{T}_1\upharpoonright T=\mathcal{T}_2\upharpoonright T$. 
       \end{proposition}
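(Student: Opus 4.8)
The plan is to prove the two inclusions $\mathcal{T}_1\upharpoonright T\subseteq\mathcal{T}_2\upharpoonright T$ and $\mathcal{T}_2\upharpoonright T\subseteq\mathcal{T}_1\upharpoonright T$ separately; since the hypothesis is completely symmetric in the indices $1$ and $2$, it suffices to establish the first. So I would fix a node-level set $(N_1,L_1)\in\mathcal{T}_1$ with $N_1\subseteq T$ and $L_1\subseteq L_T$ and choose some $X\in\mathcal{C}^T_{(N_1,L_1)}$. Because $\mathcal{C}^T_{(N_1,L_1)}\subseteq\mathcal{F}(\mathcal{G}_1)\upharpoonright T$ and the first alternative of Lemma $8$ gives $\mathcal{F}(\mathcal{G}_1)\upharpoonright T=\mathcal{F}(\mathcal{G}_2)\upharpoonright T$, there is some $(N_2,L_2)\in\mathcal{T}_2$ — which, after a horizontal translation of its node set, I may take with $N_2\subseteq T$, $L_2\subseteq L_T$ — such that $X\in\mathcal{C}^T_{(N_2,L_2)}$. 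The whole point is then to show that $(N_1,L_1)$ and $(N_2,L_2)$ agree up to translation of the node set, which puts $(N_1,L_1)$ into $\mathcal{T}_2\upharpoonright T$.

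The tool that forces this agreement is the coloring. All members of $\mathcal{C}^T_{(N_1,L_1)}$ carry the same $c_1$-colour $\gamma$, since any two of them agree on $(N_1,L_1)$; by the first alternative of Lemma $8$ again, $c_1$ and $c_2$ coincide on $\mathcal{F}(\mathcal{G}_1)\upharpoonright T=\mathcal{F}(\mathcal{G}_2)\upharpoonright T$, so every $Z\in\mathcal{C}^T_{(N_1,L_1)}$ also has $c_2(Z)=\gamma$, and hence any two members of $\mathcal{C}^T_{(N_1,L_1)}$ agree on $(N_2,L_2)$. Since the members of a single strong subtree envelope are as free as the defining node-level set permits, I would argue that the constraint ``agree on $(N_2,L_2)$'' cannot be stronger than the constraint coming from $(N_1,L_1)$: this should force $N_2^\wedge$ to lie inside $X^{in}$ and the levels of $L_2$ beyond $L_{N_2^\wedge}$ to be contained in $L_1$. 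Running the symmetric argument — starting from $Y\in\mathcal{C}^T_{(N_2,L_2)}\subseteq\mathcal{F}(\mathcal{G}_2)\upharpoonright T=\mathcal{F}(\mathcal{G}_1)\upharpoonright T$ and using $c_2(Y)=c_1(Y)$ — gives the opposite containments, so $(N_1,L_1)=(N_2,L_2)$ up to translation. To pass from finite rank to arbitrary rank I would induct on the ranks of $\mathcal{F}(\mathcal{G}_1)$ and $\mathcal{F}(\mathcal{G}_2)$, following the template of Lemmas $9$ and $10$: at a node $t\in T$ replace the two pairs by the pairs attached to $\mathcal{F}(\mathcal{G}_1)(t)$ and $\mathcal{F}(\mathcal{G}_2)(t)$ on $T(t)$, which are of strictly smaller rank and still satisfy $\mathcal{F}(\mathcal{G}_1)(t)\upharpoonright T(t)=\mathcal{F}(\mathcal{G}_2)(t)\upharpoonright T(t)$ with matching colorings, the root-level node-level sets being handled directly by the finite-rank comparison.

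The main obstacle is exactly this comparison of constraints, i.e.\ a uniqueness-of-canonical-representation statement: a single finite strong subtree belongs to many envelopes, so $(N_2,L_2)$ cannot be read off $X$ alone. The clean way to run it is to observe that the full $c_1$-colour class of $\gamma$, namely $\{Z\in\mathcal{F}(\mathcal{G}_1)\upharpoonright T : Z:(N_1,L_1)=X:(N_1,L_1)\}$, coincides — since $c_1=c_2$ on the common uniform family — with the $c_2$-colour class $\{Z\in\mathcal{F}(\mathcal{G}_2)\upharpoonright T : Z:(N_2,L_2)=X:(N_2,L_2)\}$, and then to contrast the restrictions the two descriptions impose on a variable member $Z$ of the (rich enough) common family. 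Two points need care: all comparisons of node sets must be made only up to the horizontal translations allowed by Definition $15$, and the subcase in which one of $L_1,L_2$ is an initial segment of the other has to be ruled out, which is done exactly as in Lemma $10$.
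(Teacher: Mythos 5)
Your proposal is correct and follows essentially the same route as the paper: in the base (finite-rank) case the paper likewise uses that the first alternative of Lemma 8 identifies the two families and colorings on $T$, deduces that the corresponding color classes, i.e.\ envelopes $\mathcal{C}^{T}_{(N_1,L_1)}$ and $\mathcal{C}^{T}_{(N_2,L_2)}$, coincide, and then extracts equality of the node-level sets (up to translation) by choosing envelope members avoiding a disputed node or level — which is exactly the ``constraint comparison'' you sketch; the inductive step via the families $\mathcal{F}(\mathcal{G}_1)(t)$ and $\mathcal{F}(\mathcal{G}_2)(t)$ at each $t\in T$ is also the paper's. The only difference is one of detail: where you say the agreement condition ``should force'' the containments, the paper carries out the explicit case analysis ($N_1$ a singleton, $|N_1|>1$, one or both node sets empty), but the underlying idea is the same.
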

       
       \begin{proof} The proof is by induction on the rank $\alpha$ of the uniform families $\mathcal{F}(\mathcal{G}_1)$ and $\mathcal{F}(\mathcal{G}_2)$, which is identical by Lemma $11$. If $\alpha\in \omega$, then by the discussion before Lemma $6$ we have that for any $(N_0,L_0),(N_1,L_1)\in \mathcal{T}_1$ and any two members of their strong subtree envelopes $X_0\in \mathcal{C}^{T}_{(N_0, L_0)}$ and $X_1\in \mathcal{C}^{T}_{(N_1, L_1)}$ one has: $\iota_{b^{\alpha},X_0} \circ \iota^{-1}_{b^{\alpha},X_1}(N_1)=(N_0)$ and $| L_0|= | L_1 |$. Similarly for $\mathcal{T}_2$. Suppose that $\mathcal{T}_1\upharpoonright T\neq \mathcal{T}_2\upharpoonright T$. Let $(N_1,L_1)\in \mathcal{T}_1\upharpoonright T$, so that if $|N_1|>1$ then for every $t, t' \in N_1$, the absolute value of the difference $|t|-|t'|$ is greater than $1$. For any $X\in \mathcal{C}^{T}_{(N_1, L_1)}$ consider $c_1(X)$. Since we have the first alternative of Lemma $8$ on hold, we must have that $c_2(X)=c_1(X)$ for $X \in \mathcal{C}^{T}_{(N_2, L_2)}$, $(N_2,L_2)\in \mathcal{T}_2\upharpoonright T$ as well. That must be true for all the members of $ \mathcal{C}^{T}_{(N_1, L_1)}$, which implies that $ \mathcal{C}^{T}_{(N_1, L_1)}= \mathcal{C}^{T}_{(N_2, L_2)}$. As a result for $X\in \mathcal{C}^{T}_{(N_1, L_1)}$ and $Y\in \mathcal{C}^{T}_{(N_2, L_2)}$ we have that $L_X=L_Y$.
       If $N_1=\{ t\}$, then $N_2=\{ t\}$ as well, otherwise if $N_2=\{ s\}$, then $X(0)=t\neq s=X(0)$, a contradiction. Suppose that $| N_1|>1$ and let $t\in \{ (N_1\setminus N_2)\cup (N_2\setminus N_1)\}$ is of minimal height. Suppose that $t\in N_1$. Since $X\in \mathcal{C}^{T}_{(N_1, L_1)}$ there exists $n\in |X|$ such that $t\in X(n)$. Choose a $Y\in  \mathcal{C}^{T}_{(N_2, L_2)}$ such that $t\notin Y(n)$. Notice that $Y\notin \mathcal{C}^{T}_{(N_1, L_1)}$, a contradiction. If now $N_1=N_2=\emptyset$ then we must have $L_1=L_2$, other wise for every $X'\in  \mathcal{C}^{T}_{(N_1, L_1)}$ and $Y'\in \mathcal{C}^{T}_{(N_2, L_2)}$ we would have that $L_X'\neq L_Y'$ contradicting that $ \mathcal{C}^{T}_{(N_1, L_1)}= \mathcal{C}^{T}_{(N_2, L_2)}$. Finally if $N_2=\emptyset$ and $N_1\neq \emptyset$ pick $t\in N_1$ so that for any other $t' \in N_1$, we have that $l=|t|\geq |t'|$. Pick a $Y\in \mathcal{C}^{T}_{ L_2}$ so that $t\notin Y$. This is always possible since our node-level set $(N_2,L_2)$ is only a level set.
       Then $Y\notin \mathcal{C}^{T}_{(N_1, L_1)}$, a contradiction of $ \mathcal{C}^{T}_{(N_1, L_1)}= \mathcal{C}^{T}_{(N_2, L_2)}$. As a consequence $ \mathcal{C}^{T}_{(N_1, L_1)}= \mathcal{C}^{T}_{(N_2, L_2)}$ implies that  for any $X'\in \mathcal{C}^{T}_{(N_1, L_1)}$, $Y'\in\mathcal{C}^{T}_{(N_2, L_2)}$ both finite strong subtrees of height $\alpha<\omega$, we have that $\iota_{b^{\alpha},X'} \circ \iota^{-1}_{b^{\alpha},Y'}(N_2)=(N_1)$ and $| L_1|= | L_2 |$. Therefore $\mathcal{T}_1\upharpoonright T=\mathcal{T}_2\upharpoonright T$.

            Assume that the assertion of our proposition holds for $\beta< \alpha$ uniform families and consider the case of $\alpha\geq \omega$ uniform families $\mathcal{F}(\mathcal{G}_1)$ and $\mathcal{F}(\mathcal{G}_2)$. For any node $t\in T$, $\mathcal{F}(\mathcal{G}_1)(t)\upharpoonright T$ and $\mathcal{F}(\mathcal{G}_2)(t)\upharpoonright T$ are both uniform families of rank less than $\alpha$. The inductive hypothesis applies to give us $\mathcal{T}^t_1\upharpoonright T=\mathcal{T}^t_2\upharpoonright T$. That being true for every $t\in T$ implies that $\mathcal{T}_1\upharpoonright T=\mathcal{T}_2\upharpoonright T$.
       \end{proof}

           Now the family of node-level sets $\mathcal{T}$ that will satisfy the conditions of Definition $16$ is defined as follows:  For a node $s_0\in T$ if there exists a node $s_1\in T$ so that the first alternative of the above statement holds, then $\mathcal{T}^{s_0}\subset \mathcal{T}$. If for all $s_1\in T$ we have the second alternative holding then $s_0\cup \mathcal{T}^{s_0}:=\{ (s_0\cup N,L): (N,L)\in \mathcal{T}_{s_0}\}\subset \mathcal{T}$. Similarly for $\phi$ i.e. if $\mathcal{T}^{s_0}\subset \mathcal{T}$ then $\phi \upharpoonright \mathcal{T}^{s_0}=\phi_{s_0}$. If now $s_0 \cup \mathcal{T}^{s_0}\subset \mathcal{T}$, then $\phi  \upharpoonright  (s_0\cup \mathcal{T}^{s_0})=\phi_{s_0}\upharpoonright \mathcal{T}^{s_0}$.

           This completes the inductive step and the proof of Theorem $7$.\\
           
           We give a proof now of our second remark.
           
       \begin{proposition} In the contact of Definition $16$, by taking the union of all the strong subtree envelopes of all node-level sets of the family $\mathcal{T}$ and by passing to an infinite strong subtree if necessary, we obtain a uniform family $\mathcal{F}(G)$ of rank less than or equal to the rank of $\mathcal{G}$.
       \end{proposition}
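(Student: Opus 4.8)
The plan is to argue by induction on the rank $\alpha$ of $\mathcal{G}$, treating the single-tree case and the case of a $d$-sequence $(U_i)_{i\in d}$ in parallel, since the two are proved by the same argument. The base case $\alpha=0$ is immediate: then $\mathcal{G}=\{\emptyset\}$, which forces $\mathcal{T}=\{(\emptyset,\emptyset)\}$ and hence $\mathcal{F}(\mathcal{G})=\{\emptyset\}$, a $0$-uniform family. For finite $\alpha=n$ I would reuse the argument already carried out in the paragraphs preceding Lemma $6$: since $\mathcal{G}=\mathcal{S}_n(U)$, the inductive hypothesis of Theorem $8$ lets us take $c$ canonical, and two applications of Theorem $4$ produce a strong subtree on which every envelope $\mathcal{C}^U_{(N,L)}$, $(N,L)\in\mathcal{T}$, consists of strong subtrees of one common height $k$, with any two node-level sets of $\mathcal{T}$ having isomorphic envelopes. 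The bound $k\le n$ is then a counting observation: for $(N,L)\in\mathcal{T}$ one has $N\subseteq b^{<n}$, so $L_{N^\wedge}\subseteq\{0,\dots,n-1\}$, and $L\subseteq n$, while $L_N<L$ forces $L_{N^\wedge}$ and $L$ to be disjoint, whence $k=||N^\wedge||+|L|=|L_{N^\wedge}\cup L|\le n$. Since on a $b$-branching tree the only $k$-uniform family is $\mathcal{S}_k$ of that tree, and the union over $\mathcal{T}$ of the envelopes, restricted to the chosen strong subtree, is all of $\mathcal{S}_k$ of it, $\mathcal{F}(\mathcal{G})$ is $k$-uniform with $k\le n$.

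For $\alpha$ of infinite rank I would fix a node $t\in U$: then $\mathcal{G}(t)$ is a $\beta_t$-uniform family on $U(t)$ with $\beta_t<\alpha$, and, exactly as in the fusion argument already used in Section $6$, we may assume the induced coloring $c_t$ is canonical on $U(t)$, with family of node-level sets $\mathcal{T}^t$. Applying the inductive hypothesis to $(\mathcal{G}(t),c_t)$ gives, after passing to a strong subtree which we absorb into a fusion over all $t$, that the union of the envelopes of the members of $\mathcal{T}^t$, call it $\mathcal{F}(\mathcal{G})(t)$, is a uniform family on $U(t)$ of some rank $\gamma_t\le\beta_t$. It then remains to check that $\mathcal{F}(\mathcal{G})=\bigcup_{t\in U}t^\frown\mathcal{F}(\mathcal{G})(t)$ is itself uniform of rank $\le\alpha$. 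Here I would color each node $t$ of $U$ by $\gamma_t$ and apply Theorem $4$ to $\mathcal{S}_1(U)$: on the resulting strong subtree either $\gamma_t$ is constant, say equal to $\gamma$, and $\mathcal{F}(\mathcal{G})$ is $(\gamma+1)$-uniform with $\gamma+1\le\alpha$; or $\gamma_t$ depends only on $|t|$ and increases strictly along chains, in which case Definition $9(3)$ is verified directly --- $(3.1)$ holds because $\{t:\gamma_t=\beta\}$ is a single level and hence finite, and $(3.2)$ holds because $\sup_{t\in C}\gamma_t$ is the limit along every infinite chain $C$ --- so the rank $\sup_t(\gamma_t+1)$ of $\mathcal{F}(\mathcal{G})$ is $\le\alpha$, as $\gamma_t\le\beta_t<\alpha$ for all $t$.

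The step I expect to be the main obstacle is this last one: verifying that the union $\bigcup_{t\in U}t^\frown\mathcal{F}(\mathcal{G})(t)$ genuinely satisfies the branching, finiteness and chain-supremum clauses of Definition $9$ once the ranks $\gamma_t$ have been homogenized, and that this homogenization never pushes the rank of $\mathcal{F}(\mathcal{G})$ beyond $\alpha$. Everything is controlled by the pointwise inequality $\gamma_t\le\beta_t$ together with the clauses already satisfied by $\mathcal{G}$, but transferring them cleanly through the fusion requires careful bookkeeping; a secondary subtlety, already present in the finite case, is checking that the union of the envelopes really exhausts $\mathcal{S}_k$ of the subtree rather than only a proper subfamily.
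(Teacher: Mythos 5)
Your overall strategy---induction on the rank, the finite case via the homogenization already carried out in the discussion before Lemma 6, and the infinite case by applying the inductive hypothesis at each node $t$ and then homogenizing the ranks of the families $\mathcal{F}(\mathcal{G})(t)$ through a coloring of $\mathcal{S}_1$---is the paper's strategy. However, the step you set aside as a ``secondary subtlety'' is in fact the core of the finite case, and you never supply it: after the heights of the envelope members are made constant, equal to some $k$, you must show that the union of the envelopes, restricted to a suitable $T\in\mathcal{S}_{\infty}(U)$, is \emph{all} of $\mathcal{S}_k(T)$; since for finite $k$ the only $k$-uniform family on $T$ is $\mathcal{S}_k(T)$, either the union exhausts it or the family is simply not uniform and the proposition fails on that $T$. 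The paper closes exactly this gap with Corollary 1 (the dichotomy coming from the topological Ramsey space structure of $\mathcal{S}_{\infty}$): applied to $\mathcal{F}(\mathcal{G})$ it gives $T$ such that either $\mathcal{S}_{<\infty}(T)\cap\mathcal{F}(\mathcal{G})=\emptyset$ or every infinite strong subtree of $T$ has an initial segment in $\mathcal{F}(\mathcal{G})$. The first alternative is ruled out by canonicity of $c$: $\mathcal{G}\upharpoonright T$ is uniform, so one may pick $X\in\mathcal{G}\upharpoonright T$, and any $Y\in\mathcal{C}^{T}_{f(X)}$ lies in $\mathcal{F}(\mathcal{G})\cap\mathcal{S}_k(T)$. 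The second alternative, since all members of $\mathcal{F}(\mathcal{G})$ have height $k$, forces $\mathcal{S}_k(T)\subseteq\mathcal{F}(\mathcal{G})$, i.e.\ $\mathcal{F}(\mathcal{G})\upharpoonright T=\mathcal{S}_k(T)$. Without an argument of this kind your finite case, and hence the whole induction, is incomplete.

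A smaller point: in the infinite-rank step you color each node $t$ by the rank $\gamma_t$ of $\mathcal{F}(\mathcal{G})(t)$ and invoke Theorem 4, but this coloring may take infinitely many values, so the finite-coloring Milliken theorem does not apply. What is needed (and what the paper uses here) is Theorem 5 for $\mathcal{S}_1$, whose canonical trichotomy---constant, one-to-one, or determined by the level---is precisely the case distinction you then perform; with that correction the rest of your limit-case bookkeeping matches the paper's (admittedly terse) treatment, which simply homogenizes the ranks and reads off that the resulting rank is at most $\alpha$.
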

       
       \begin{proof} We give a proof by induction on the rank of $\mathcal{G}$. Suppose that the rank of $\mathcal{G}$ is finite. As we have seen above from the discussion before Lemma $6$, if $(N_1,L_1)$, $(N_2,L_2)\in \mathcal{T}$, then $X_1\in \mathcal{C}^U_{(N_1,L_1)}$ and $X_2\in  \mathcal{C}^U_{(N_2,L_2)}$ are isomorphic and have height equal to $n$. By taking the union of all the strong subtree envelopes of all members of $\mathcal{T}$, we get a family $\mathcal{F}(\mathcal{G})$ of finite strong subtrees of $U$ with height equal to $n$. By applying Corollary $1$ we get a strong subtree $T$ of $U$ so that the second statement of this corollary holds. To see that suppose we get $T\in \mathcal{S}_{\infty}(U)$ such that $\mathcal{S}_n(T)\cap \mathcal{F}(\mathcal{G})=\emptyset$. But $\mathcal{G}\upharpoonright T$ is also a uniform family and the mapping $c$ restricted on that family is canonical. Pick an $X\in \mathcal{G}\upharpoonright T$ and consider $Y\in \mathcal{C}^T_{f(X)}$. Note that $Y\in \mathcal{F}(\mathcal{G})\cap \mathcal{S}_n(T)$, a contradiction. Notice that the elements of any strong subtree envelop have height $n$. Therefore we get a uniform family $\mathcal{F}(G)$ of rank $n$. In fact we get the unique uniform family of rank $n$ on $T$.
       
       Assume now that the rank of $\mathcal{G}$ is $\omega$. By definition $\mathcal{G}(t)$ is of rank $n$, for some $n\in \omega$. By above $\mathcal{F}(\mathcal{G})(t)$ is of rank less than or equal to $n$. Consider the coloring $c': \mathcal{S}_1(U)\to \omega$ defined by $c(t)=n$ if and only if the rank of $\mathcal{F}(\mathcal{G})(t)$ is $n$. By Theorem $5$ we get a strong subtree $T$ of $U$ such that either the coloring is constant and equal to $n_0\in \omega$, one-to-one, or is constant on each level, i.e. $c(t)=c(s)$ if and only if $|t|= |s|$. In the first case the rank of $\mathcal{G}_0$ is $n_0$. In the last two cases the rank of $\mathcal{F}(G)$ is $\omega$.
       
       Suppose now that the rank of $\mathcal{G}$ is $\alpha$, for $\alpha> \omega$ and for all $\beta<\alpha$ our proposition holds. By definition $\mathcal{G}(t)$ is of rank $\beta<\alpha$, so the inductive hypothesis applies and we proceed as in the above paragraph.
       \end{proof}

          Finally we show that our definition of a canonical coloring is the appropriate one.
          
          \begin{proposition} Let $c$ be a canonical coloring of a uniform family $\mathcal{G}$ on $U$ and let $(\mathcal{T}_0,f_0)$, $(\mathcal{T}_1,f_1)$ be two pairs that satisfy the conditions $1$ and $2$ of the Definition $16$. Then there exists $T\in \mathcal{S}_{\infty}(U)$ so that:\\
          \begin{center}$ \mathcal{T}_0\upharpoonright T=\mathcal{T}_1\upharpoonright T$ and $f_0=f_1$ on $\mathcal{G}\upharpoonright T$ \end{center}
         \end{proposition}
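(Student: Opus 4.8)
The plan is to regard $(\mathcal{T}_0,f_0)$ and $(\mathcal{T}_1,f_1)$ as two canonical representations of \emph{the same} equivalence relation $c$ and to compare them via Lemma $8$, sharpened by Proposition $2$. First I would dispose of the degenerate case where $\mathcal{G}$ has rank $0$; otherwise $\mathcal{G}\neq\{\emptyset\}$ and the range of $c$ is nonempty. Next, by Proposition $3$ together with a simultaneous fusion as in the paragraphs preceding Lemma $6$, I would pass to a common infinite strong subtree on which both $\mathcal{T}_0$ and $\mathcal{T}_1$ generate genuine uniform families $\mathcal{F}(\mathcal{G}_0)$ and $\mathcal{F}(\mathcal{G}_1)$ of finite strong subtrees of $U$ --- the unions of all strong subtree envelopes of the members of $\mathcal{T}_0$, resp.\ of $\mathcal{T}_1$ --- each of rank at most the rank of $\mathcal{G}$. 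On $\mathcal{F}(\mathcal{G}_0)$ I put the coloring $c'_0$ with $c'_0(Y)=\phi_0((N,L))$ whenever $Y\in\mathcal{C}^U_{(N,L)}$ and $(N,L)\in\mathcal{T}_0$, and likewise $c'_1$ on $\mathcal{F}(\mathcal{G}_1)$ built from $\phi_1$ and $\mathcal{T}_1$; these are precisely the colorings handled in Lemma $8$. Since $\phi_0\circ f_0=c=\phi_1\circ f_1$ with $\phi_0,\phi_1$ one-to-one up to translation, the image of $c'_0$ and the image of $c'_1$ both contain the nonempty range of $c$, hence they meet.

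Then I would apply Lemma $8$ to $(\mathcal{F}(\mathcal{G}_0),c'_0)$ and $(\mathcal{F}(\mathcal{G}_1),c'_1)$. Because the two images meet and are nonempty, the second alternative of that lemma cannot hold, so I obtain $T\in\mathcal{S}_\infty(U)$ with $\mathcal{F}(\mathcal{G}_0)\upharpoonright T=\mathcal{F}(\mathcal{G}_1)\upharpoonright T$ and $c'_0=c'_1$ on this family; this is consistent with Lemma $11$, which would forbid the first alternative only if the two ranks differed, which cannot happen here. Proposition $2$ now applies verbatim and yields $\mathcal{T}_0\upharpoonright T=\mathcal{T}_1\upharpoonright T$; I write $\mathcal{T}$ for this common family. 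Evaluating $c'_0$ and $c'_1$ on an arbitrary $Y\in\mathcal{C}^T_{(N,L)}$ with $(N,L)\in\mathcal{T}$ then gives $\phi_0((N,L))=\phi_1((N,L))$, so $\phi_0$ and $\phi_1$ agree on $\mathcal{T}$.

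Finally, for $X\in\mathcal{G}\upharpoonright T$ I have $f_0(X),f_1(X)\in\mathcal{T}$ and $\phi_0(f_0(X))=c(X)=\phi_1(f_1(X))=\phi_0(f_1(X))$; since $\phi_0$ is one-to-one up to translation on $\mathcal{T}$, this forces $f_0(X)=f_1(X)$ up to translation of the node set, i.e.\ $f_0=f_1$ on $\mathcal{G}\upharpoonright T$, which together with $\mathcal{T}_0\upharpoonright T=\mathcal{T}_1\upharpoonright T$ is the assertion. The step I expect to be the main obstacle is the first one: realizing $\mathcal{F}(\mathcal{G}_0)$ and $\mathcal{F}(\mathcal{G}_1)$ as honest uniform families living on one and the same infinite strong subtree, and checking carefully that the induced colorings $c'_0,c'_1$ fall exactly under the hypotheses of Lemma $8$; once that bookkeeping is done, the rest is a direct application of Lemma $8$ and Proposition $2$.
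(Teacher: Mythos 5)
Your proposal follows essentially the same route as the paper's own proof: form the uniform families generated by the envelopes of $\mathcal{T}_0$ and $\mathcal{T}_1$, apply Lemma $8$ to the induced colorings (ruling out the disjoint-image alternative because both images contain the range of $c$, with Lemma $11$ confirming the ranks agree), invoke Proposition $2$ to get $\mathcal{T}_0\upharpoonright T=\mathcal{T}_1\upharpoonright T$, and then deduce $f_0=f_1$ on $\mathcal{G}\upharpoonright T$ from the injectivity (up to translation) of the witnessing maps. Your write-up in fact spells out the justification for the first alternative of Lemma $8$ more explicitly than the paper does, but the argument is the same.
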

         
          \begin{proof} By definition $f_i:\mathcal{G}\to \mathcal{T}_i$ is such that $c(X_0)=c(X_1)$ if and only if $f_i(X_0)=f_i(X_1)$, for $i\in 2$. Let $\mathcal{G}_0$ be the uniform family resulting by taking the union of all the strong subtree envelopes of the node-level sets in $\mathcal{T}_0$ and $\mathcal{G}_1$ the one resulting from $\mathcal{T}_1$. We remind the reader here that both uniform families are assumed to be defined on $U$ instead of one of its infinite strong subtrees.
          By an application of Lemma $8$ on $(\mathcal{G}_0,c_0)$ and $(\mathcal{G}_1,c_1)$, we get $T\in \mathcal{S}_{\infty}(U)$ such that the first statement of the lemma holds. We also notice that both ranks of $\mathcal{G}_0$ and $\mathcal{G}_1$ must be equal by Lemma $11$. By Proposition $2$ we have that $\mathcal{T}_0\upharpoonright T=\mathcal{T}_1\upharpoonright T$.

       We claim that $\mathcal{T}_0\upharpoonright T=\mathcal{T}_1\upharpoonright T$,  implies that $f_0 $ agree with $f_1$ on $\mathcal{G}\upharpoonright T$. To see this suppose that for $X\in \mathcal{G}$ we have that $f_0(X)=(N_0,L_0)\neq f_1(X)=(N_1,L_1)$. Let $X_0\in \mathcal{C}^{T}_{(N_0,L_0)}$ and $X_1 \in \mathcal{C}^{T}_{(N_1,L_1)}$. Then $c_0(X_0)\neq c_0(X_1)$ and $c_1(X_0)\neq c_1(X_1)$. But then $c(X)\neq c(X)$, a contradiction.

                 \end{proof}

      The inductive step of Theorem $8$ is identical with the inductive step of Theorem $7$. Therefore we extended the result of Milliken completing the research along the line of P.Erd\"os and R. Rado.

      Next we mention a possible application of our canonical result.      
      Suppose that $\mathcal{U}$ and $\mathcal{V}$ are ultrafilters on index-sets $X$ and $Y,$ respectively. Let $\mathcal{V}\leq_{RK}\mathcal{U}$ denote the fact that there is a map $F:X\rightarrow Y$ such that
$\mathcal{V}=\{M\subseteq Y: F^{-1}(M)\in\mathcal{U}\}.$
Put $\mathcal{U}\equiv_{RK}\mathcal{V}$ whenever $\mathcal{V}\leq_{RK}\mathcal{U}$ and $\mathcal{U}\leq_{RK}\mathcal{V}.$ This is equivalent to saying that there is a bijection between a set in $\mathcal{U}$ and a set in $\mathcal{V}$ that transfers one ultrafilter into the other. There is a coarser pre-ordering between ultrafilters
that is of a considerable recent interest. This is the \emph{Tukey ordering} which says that $\mathcal{V}\leq_T\mathcal{U}$ if there is a monotone map
$F': \mathcal{U}\rightarrow \mathcal{V}$ whose range generates $\mathcal{V},$ i.e., every element of $\mathcal{V}$ is refined by $F'(M)$ for $M\in \mathcal{U}.$ Recall that a (non-principal) ultrafilter $\mathcal{U}$ on $\mathbb{N}$ is
\emph{selective} if for every map $f:\mathbb{N}\rightarrow\mathbb{N}$ there is $M\in \mathcal{U}$ such that the restriction $f\upharpoonright M$ is either one-to-one or
constant.

In \cite{Ra-To}, S. Todorcevic has used the Theorem $2$ to prove the following result.

\begin{theorem}[\cite{Ra-To}]
Tukey predecessors of a selective ultrafilter on $\mathbb{N}$ are exactly its countable transfinite Fubini powers modulo, of course, the Rudin-Keisler equivalence.
\end{theorem}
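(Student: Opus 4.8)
The plan is to prove the two inclusions. Write $\mathcal U^{0}$ for the principal ultrafilter, $\mathcal U^{\alpha+1}=\mathcal U^{\alpha}\otimes\mathcal U$, and at a limit $\lambda$ let $\mathcal U^{\lambda}$ be the Fubini limit of $(\mathcal U^{\alpha_n})_{n}$ along a fixed ladder $\alpha_n\uparrow\lambda$; recall that $\mathcal V\le_{RK}\mathcal W$ implies $\mathcal V\le_{T}\mathcal W$.

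For the routine inclusion --- that every countable Fubini power of $\mathcal U$ is a Tukey predecessor of $\mathcal U$ --- it suffices, by the previous remark, to show $\mathcal U^{\alpha}\le_{T}\mathcal U$ for all $\alpha<\omega_1$, and this is done by induction on $\alpha$ using that a selective ultrafilter is a p-point, a q-point and rapid. The base cases are trivial; at a successor one reduces $\mathcal U^{\alpha}\otimes\mathcal U$ to a finite product which the p-point/rapidity properties of $\mathcal U$ collapse to $\mathcal U$ itself; at a limit one glues the maps obtained for the $\mathcal U^{\alpha_n}$ by a fusion/diagonalization against a fixed decreasing $\mathcal U$-sequence, in the spirit of the fusion constructions of Section~6. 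Alternatively one reads off the monotone cofinal map $\mathcal U\to\mathcal U^{\alpha}$ directly from an $\alpha$-uniform barrier on a set in $\mathcal U$ together with selectivity.

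The substance of the theorem is the reverse inclusion: if $\mathcal V\le_{T}\mathcal U$ then $\mathcal V\equiv_{RK}\mathcal U^{\alpha}$ for some $\alpha<\omega_1$. Fix a monotone $\Phi\colon\mathcal U\to\mathcal V$ with cofinal range. The first step is to \emph{finitize} $\Phi$: exploiting that a selective ultrafilter is Ramsey (so that, after shrinking to a set $A\in\mathcal U$, a monotone cofinal map may be assumed continuous, with its finite-set rule supported on a uniform barrier), one obtains $A\in\mathcal U$, a uniform barrier $\mathcal B$ on $A$, and a map $g\colon\mathcal B\to\mathcal V$ such that the sets $\bigcup\{\,g(s)\colon s\in\mathcal B,\ s\subseteq X\,\}$, for $X\in\mathcal U\upharpoonright A$, lie in $\mathcal V$ and are cofinal in it. The second step applies the canonical Ramsey theorem: one runs the transfinite induction underlying the Pudl\'ak--R\"odl theorem (Theorem~\ref{pud-rod}) \emph{inside} $\mathcal U$ --- legitimate because at each stage only finite partitions of uniform families are colored and a selective $\mathcal U$ absorbs the homogeneous set --- to produce $B\subseteq A$ with $B\in\mathcal U$, a uniform family $\mathcal T$ on $B$ of rank $\alpha\le\operatorname{rank}(\mathcal B)$, and an inner map $f\colon\mathcal B\upharpoonright B\to\mathcal T$ with $g(s)=g(t)\iff f(s)=f(t)$.

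It remains to identify the image filter. Since $\mathcal T$ is $\alpha$-uniform, the filter on $\mathcal T$ generated by $\{\,\mathcal T\upharpoonright C\colon C\in\mathcal U,\ C\subseteq B\,\}$ is, up to Rudin--Keisler equivalence, precisely $\mathcal U^{\alpha}$; unravelling the factorization of $\Phi$ through $g$ and then through $f$ shows that the ultrafilter on the range of $\Phi\upharpoonright(\mathcal U\upharpoonright B)$, which generates $\mathcal V$, is the pushforward of that filter under the bijection induced by $f$. Hence $\mathcal V\equiv_{RK}\mathcal U^{\alpha}$, and both inclusions are established. The main obstacle is the first step of the reverse inclusion, the finitization of an arbitrary Tukey reduction: it is exactly here that selectivity of $\mathcal U$ --- equivalently, that $\mathcal U$ is Ramsey and its Tukey reductions are essentially continuous --- is indispensable; once the map lives on a barrier the canonical theorem carries the combinatorial weight, and the recognition of a Fubini power is bookkeeping. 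Replacing Theorem~\ref{pud-rod} by the strong-subtree canonical theorem proved above yields, by the same scheme, an analogous classification of the Tukey predecessors arising from selective-like objects built on trees rather than on $\omega$.
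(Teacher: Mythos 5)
The statement you are proving is not proved in this paper at all: it is Theorem 9, quoted verbatim from Raghavan--Todorcevic \cite{Ra-To} purely as motivation for the material on $\mathcal{S}_\infty(U)$, with the remark that its proof uses the Pudl\'ak--R\"odl theorem (Theorem 2). So there is no in-paper argument to compare yours with; the relevant comparison is with the argument of \cite{Ra-To}, and your outline does follow that paper's architecture: show $\mathcal U^\alpha\le_T\mathcal U$ for $\alpha<\omega_1$, and conversely finitize a monotone cofinal map into a map on a uniform barrier, canonize it by a local Pudl\'ak--R\"odl theorem, and recognize the quotient as a Fubini power.

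As a proof, however, the two steps that carry all the weight are asserted rather than established. First, the ``finitization'': the claim that a monotone cofinal $\Phi:\mathcal U\to\mathcal V$ may, after shrinking to some $A\in\mathcal U$, be assumed continuous and hence generated by a map $g$ on a uniform barrier is itself a substantial theorem of \cite{Ra-To} (it uses the p-point/selectivity of $\mathcal U$ in an essential and delicate way); saying that selectivity makes Tukey reductions ``essentially continuous'' names the obstacle without overcoming it. Second, your justification for running Pudl\'ak--R\"odl inside $\mathcal U$ --- ``only finite partitions of uniform families are colored'' --- is not accurate: the canonization deals with colorings with countably many colors, and what is really needed is the local (Mathias-style) canonical Ramsey property of selective ultrafilters, i.e.\ that the homogeneous/canonical set can be found inside $\mathcal U$; this is true but requires proof. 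Likewise the identification of the filter induced on an $\alpha$-uniform barrier by $\{\mathcal T\upharpoonright C: C\in\mathcal U\}$ with $\mathcal U^\alpha$ up to Rudin--Keisler equivalence is nontrivial bookkeeping (it is an induction on rank using selectivity), not automatic. So your proposal is a correct roadmap of the known proof, but as written it defers exactly the parts that make the theorem hard; your closing suggestion that the strong-subtree canonical theorem of this paper should yield an analogous classification for ultrafilters selective relative to $\mathcal S_\infty(U)$ is precisely the speculation the authors themselves make at the end of Section 6.
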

  
    In section $4$ we established that $(\mathcal{S}_{\infty}((U_i)_{i\in d}), \subseteq, r)$ forms a topological Ramsey space. It turns out that every topological Ramsey space has the corresponding notion of a selective ultrafilter (see \cite{Mij}). Since we proved the analogue of the Pudl\'ak-R\"odl result for the space of $\mathcal{S}_\infty(U)$ of strong subtrees, we really believe that our Theorem $7$ can be used to characterize the Tukey predecessors of ultrafilters on $\mathcal{S}_1(U)$ that are selective relative to the space $\mathcal{S}_\infty (U)$.

\end{document}